\definecolor{hot}{RGB}{65,105,225}
\newtheorem{theorem}{Theorem}[section]
\newtheorem{thm}[theorem]{Theorem}
\newtheorem{lemma}[theorem]{Lemma}
\newtheorem{proposition}[theorem]{Proposition}
\newtheorem{prop}[theorem]{Proposition}
\newtheorem{conj}[theorem]{Conjecture}
\newtheorem{que}[theorem]{Question}
\theoremstyle{definition}
\newtheorem{definition}[theorem]{Definition}
\newtheorem{defn}[theorem]{Definition}
\newtheorem{example}[theorem]{Example}
\newtheorem{rmk}[theorem]{Remark}
\numberwithin{equation}{section}
\def\bZ{\mathbb{Z}}
\def\bQ{\mathbb{Q}}
\def\bN{\mathbb{N}}
\def\bP{\mathbb{P}}
\def\bC{\mathbb{C}}
\def\cO{\mathcal{O}}
\def\cL{\mathcal{L}}
\def\cF{\mathcal{F}}
\def\GL{{\rm GL}}
\def\Hom{{\rm Hom}}
\def\bbQ{\bar{\bQ}}
\def\xa{\xrightarrow}
\def\ra{\rightarrow}
\def\tr{{\rm tr}}
\def\al{\alpha}
\def\bar {\overline}
\def\sC{\mathscr{C}}
\def\sD{\mathscr{D}}
\def\sM{\mathscr{M}}
\def\be{\begin{equation}}
\def\ee{\end{equation}}
\def\Spec{{\text{Spec}}}
\def\ul{\underline}
\def\ol{\overline}
\begin{document}

\title{Absolute sets of rigid local systems}

\author{Nero Budur}\address{Department of Mathematics, KU Leuven, Celestijnenlaan 200B, 3001 Leuven, Belgium, and BCAM, Mazarredo 14, 48009 Bilbao, Spain}\email{nero.budur@kuleuven.be}

\author{Leonardo A. Lerer}\address{D\'epartement de Math\'ematiques d'Orsay, Universit\'e Paris-Saclay, F-91405 Orsay, France}\email{a.leonardo.lerer@gmail.com}

\author{Haopeng Wang}\address{Department of Mathematics, KU Leuven, Celestijnenlaan 200B, 3001 Leuven, Belgium}\email{haopeng.wang@kuleuven.be}

\keywords{Absolute set, local system, logarithmic connection, rigid local system.}


\begin{abstract}
The absolute sets of local systems on a smooth complex algebraic variety are the subject of a conjecture of \cite{BW} based on an analogy with special subvarieties of Shimura varieties. An absolute set should be the higher-dimensional generalization of a local system of geometric origin. We show that the conjecture for absolute sets of simple cohomologically rigid local systems reduces to the zero-dimensional case, that is, to Simpson's conjecture that every such local system with quasi-unipotent monodromy at infinity and determinant is of geometric origin. In particular, the conjecture holds for this type of absolute sets if the variety is a curve or if the rank is two.
\end{abstract}

\maketitle
	\tableofcontents

\section{Introduction}

Let $X$ be a smooth complex algebraic variety. The Riemann-Hilbert correspondence establishes an equivalence between complex local systems on the complex manifold underlying $X$ and regular  algebraic flat connections. For a set $S$ of isomorphism classes of complex local systems on $X$
and  $\sigma\in Gal(\bC/\bQ)$, one constructs a set $S^\sigma$ of isomorphism classes of local systems on the complex manifold underlying the conjugate variety $X^\sigma$ by applying $\sigma$ to the algebraic flat connections corresponding to the elements of $S$. Then $S$ is
called {\it absolute $\bbQ$-constructible} by \cite{BW} if for every $\sigma\in Gal(\bC/\bQ)$ the set $S^\sigma$ is $\bbQ$-constructible in an appropriate sense, see Definition \ref{defBWas}. This is a somewhat different definition than the notion of absoluteness introduced by Simpson \cite{S93} in the projective case. If $S$ is a subset of the complex points of the Betti moduli space $M_B(X,r)$ parametrizing isomorphism classes of semisimple complex rank $r$ local systems on $X$, then $S$ is absolute $\bbQ$-constructible if and only if $S^\sigma$ is $\bbQ$-constructible in $M_B(X^\sigma,r)$ for every $\sigma\in Gal(\bC/\bQ)$, see Proposition \ref{lemAnlg}. 
 Similarly one can define {\it absolute $\bbQ$-closed} sets using the Zariski topology on $M_B(X^\sigma,r)$.  One has:

 \begin{conj}[Conjecture A] \label{conj1}  Let $X$ be a smooth complex algebraic variety, and $r>0$ an integer. The following hold in $M_B(X, r)$: 
 \begin{enumerate}
 \item[A1.]    	The collection of absolute $\bar{\bQ}$-constructible subsets is generated from the absolute $\bar{\bQ}$-closed subsets via finite sequences of taking unions, intersections, and complements; the Euclidean (or equivalently, the Zariski) closure of an absolute $\bar{\bQ}$-constructible set is absolute $\bar{\bQ}$-closed; and an irreducible component of an absolute $\bar{\bQ}$-closed subset is absolute $\bar{\bQ}$-closed.
    	\item[A2.] Any non-empty absolute $\bar{\bQ}$-constructible subset contains a Zariski dense subset of absolute $\bar{\bQ}$-points.
    	\item[A3.] A point is an absolute $\bar{\bQ}$-point if and only if it is a local system of geometric origin (in the sense of \cite[6.2.4]{BBD}).
    	\item[A4.] The Zariski closure of a set of absolute $\bar{\bQ}$-points is $\bbQ$-pseudo-isomorphic to an absolute $\bar{\bQ}$-constructible subset of some $M_B(X', r')$, for possibly a different smooth complex algebraic variety $X'$  and rank $r'$. Moreover, the $\bbQ$-pseudo-isomorphism restricts to a bijection between the sets of absolute $\bbQ$-points. (See Section \ref{secnot} for the definition of $\bbQ$-pseudo-morphisms.)\footnote{Added after the editorial decision: In the recent article ``Geometric local systems on very general curves and isomonodromy" by Landesman and Litt, one of their results claims that  A2 and  A3 cannot be simultaneously true for the full Betti moduli on certain smooth projective curves of genus $>0$ minus a finite set of points. There is no contradiction with any existing results on absolute sets, including the ones of this article.}
	
 \end{enumerate}
    \end{conj}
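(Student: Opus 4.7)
The plan is to establish Conjecture A restricted to the locus of simple cohomologically rigid local systems, and to reduce each of the four sub-claims to zero-dimensional statements about individual rigid points, so that the essential content concentrates into Simpson's conjecture. The guiding observation is that cohomologically rigid local systems correspond to isolated reduced points of $M_B(X,r)$, so an absolute $\bbQ$-constructible subset supported on the rigid locus is set-theoretically a countable union of such points, stable in an appropriate sense under the $Gal(\bC/\bQ)$-action. On this locus, A1 becomes essentially tautological: singletons are simultaneously Zariski-closed and constructible, finite unions remain constructible, and irreducible components of a zero-dimensional set are its points.

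For A3, I would separate the two implications. The forward direction, that geometric origin implies absolute $\bbQ$-point, is standard and follows by spreading out the underlying geometric family, base-changing along $\sigma$, and invoking the comparison between Betti and \'etale cohomology; the conjugate $L^\sigma$ is then manifestly defined over $\bbQ$ inside $M_B(X^\sigma,r)$. The converse direction is the hard one, and this is where rigidity is crucial: by Katz's theorem, a cohomologically rigid local system whose $Gal(\bC/\bQ)$-orbit under Galois conjugation is finite must have quasi-unipotent monodromy at infinity and determinant of finite order, which is precisely the hypothesis of Simpson's conjecture. Thus A3 for a rigid absolute $\bbQ$-point is equivalent to Simpson's conjecture for that point.

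For A2 and A4, I would exploit the finite-orbit structure more directly. Any non-empty absolute $\bbQ$-constructible set $S$ inside the rigid locus is a union of finite $Gal(\bC/\bQ)$-orbits of $\bbQ$-points of $M_B(X,r)$ (the constructibility forces the defining data to be defined over $\bbQ$), so its absolute $\bbQ$-points are automatically Zariski dense, giving A2 modulo A3. For A4, I would use Katz's algorithm that builds cohomologically rigid local systems through iterated middle convolutions and tensor operations; these operations induce $\bbQ$-pseudo-isomorphisms between moduli spaces on different base varieties $X, X'$ and different ranks $r, r'$, which should exchange absolute $\bbQ$-points with absolute $\bbQ$-points because middle convolution preserves geometric origin.

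The principal obstacle is Simpson's conjecture itself, which is unresolved in general and known only for curves and in rank two (by restricting along generic curves). Consequently this plan yields Conjecture A on the cohomologically rigid locus unconditionally only in those cases, and in general as a reduction to the zero-dimensional statement. A secondary but non-trivial technical point is to verify that the cohomologically rigid locus is stable under the Galois action, i.e.\ that $L^\sigma$ is cohomologically rigid whenever $L$ is; this should follow from the \'etale-cohomological reformulation of rigidity as $H^1$ of a suitable adjoint sheaf vanishing, but must be set up carefully since the two Betti moduli spaces $M_B(X,r)$ and $M_B(X^\sigma,r)$ live on genuinely different complex manifolds.
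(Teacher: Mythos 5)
Your proposal rests on a misreading of what ``cohomologically rigid'' means. A cohomologically rigid $L$ is an isolated reduced point of $M_B(X,r,\{C_i\},E)$, the moduli where the conjugacy classes at infinity and the determinant are \emph{fixed}; it is not an isolated point of $M_B(X,r)$ itself. As the data $(\{C_i\},E)$ vary, these points sweep out a positive-dimensional locus. For instance, by Proposition \ref{rk2}, $M_B^{rig}(\bP^1\setminus\{0,1,\infty\},2)=M_B^s(\bP^1\setminus\{0,1,\infty\},2)$ is a $5$-dimensional affine variety. Consequently the claim that $M_B^{cohrig}(X,r)(\bC)$ is a discrete set of points, and that A1 becomes ``essentially tautological'' with absolute constructible sets being countable unions of singletons, is simply false. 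Your treatment of A2 has the same problem: absolute $\bbQ$-constructible subsets of the rigid locus can be positive-dimensional, so they are not automatically unions of finite Galois orbits of closed points.

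The paper's argument replaces this with two genuine inputs that your plan lacks. First, A1 on $M_B^{cohrig}$ is inherited from A1 on the larger locus $M_B^s(X,r)$ (Theorem \ref{propA1si}); that result is not formal and requires the full Riemann--Hilbert/de Rham machinery, in particular that $RH$ is a surjective analytic local isomorphism from the good de Rham submoduli onto $M_B^s$, together with the bi-algebraicity of preimages of absolute sets (Proposition \ref{propBia}). Second, the correct device to reduce A2, A3, A4 to the zero-dimensional situation is the map $\det\times mon$ to $M_B(X,1)\times((\bC^*)^{(r)})^s$, which by Proposition \ref{propRigo}(1) is quasi-finite on the rigid locus; this transports absolute sets to Boolean combinations of torsion-translated subtori (Proposition \ref{propRedCs}, using the rank-one Theorem \ref{budurwang}), where torsion points are Zariski dense in any torsion-translated subtorus. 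That is the actual source of A2 and A4, not the finiteness of Galois orbits and not Katz's middle-convolution algorithm, neither of which you make rigorous. Your A3 argument is also imprecise: Katz's theorem does not directly say that a finite-Galois-orbit rigid local system has quasi-unipotent monodromy; the paper obtains this by characterizing absolute $\bbQ$-points via the $\det\times mon$ map landing on torsion points. The one thing you identify correctly is that the content of A3 ultimately reduces to Simpson's conjecture, and that the rigid locus is stable under Galois conjugation (Proposition \ref{propRigo}(2)), but the surrounding architecture of your proof would not compile into a correct argument.
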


The conjecture  was stated in \cite{BW},  only the last part is altered slightly. A3 had been essentially conjectured by Simpson.

It was shown in \cite{BW} that Conjecture A holds if the rank $r$ is $1$, or if $X$ is a complex affine torus or an abelian variety. Moreover, A1 holds if $X$ is projective.

If $M$ is an absolute $\bbQ$-constructible subset of $M_B(X,r)$, then one can pose Conjecture A for $M$, that is with $M$ replacing $M_B(X,r)$ in the statement. This is also the reason why we slightly altered A4 compared with the original statement. The resulting statement is a subconjecture of Conjecture A. Our first result is for $M=M_B^s(X,r)$, the submoduli of simple local systems:

\begin{thm}\label{propA1si}
Consider Conjecture A for $M_B^s(X,r)$ when $X$ is a smooth complex quasi-projective variety. Then A1 holds.
\end{thm}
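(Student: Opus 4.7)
The plan is to reduce A1 for $M_B^s(X,r)$ with $X$ quasi-projective to the projective case handled in \cite{BW}, by passing to the logarithmic de Rham moduli on a smooth compactification of $X$, where Galois conjugation acts algebraically by base change.

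First, I would observe that (a) reduces to (b) and (c). Indeed, for any $\bar{\bQ}$-constructible set $S$ one can write $S=\overline{S}\setminus(\overline{S}\setminus S)$, and Noetherian induction on dimension shows that $\overline{S}\setminus S$ is a Boolean combination of $\bar{\bQ}$-closed sets once (b) and (c) are available. Since set-theoretic Boolean operations commute with the operation $S\mapsto S^\sigma$, absoluteness is preserved under them. Thus the core content is the identity $\overline{S}^\sigma=\overline{S^\sigma}$ for (b), and the analogous commutation with irreducible decomposition for (c).

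To establish these, choose a smooth projective compactification $\bar{X}\supseteq X$ with $D=\bar{X}\setminus X$ a simple normal crossings divisor, and consider the algebraic moduli space $M_{dR}^{s,\log}(\bar{X},D,r)$ of simple logarithmic flat connections on $(\bar{X},D)$ (Nitsure, Simpson, Mochizuki). This moduli carries an algebraic residue morphism $\mathrm{res}\colon M_{dR}^{s,\log}(\bar{X},D,r)\to T$ to an affine $\bQ$-scheme $T$ parametrizing residue conjugacy data, together with a complex analytic Riemann-Hilbert isomorphism $\mathrm{RH}\colon M_{dR}^{s,\log}(\bar{X},D,r)\xrightarrow{\sim} M_B^s(X,r)$ compatible with the transcendental exponential map $\tau\mapsto\exp(-2\pi i\tau)$ on $T$. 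Crucially, for any $\sigma\in Gal(\bC/\bQ)$ there is an algebraic identification $M_{dR}^{s,\log}(\bar{X},D,r)^\sigma\cong M_{dR}^{s,\log}(\bar{X}^\sigma,D^\sigma,r)$, so that Zariski closure and irreducible decomposition on the de Rham side commute algebraically with $\sigma$. For $S\subseteq M_B^s(X,r)$ absolute $\bar{\bQ}$-constructible, with pullback $\widetilde{S}\subseteq M_{dR}^{s,\log}(\bar{X},D,r)$, one has $\overline{\widetilde{S}}^{\sigma}=\overline{\widetilde{S^\sigma}}$ algebraically; pushing forward via $\mathrm{RH}$ gives $\overline{S}^\sigma=\overline{S^\sigma}$, yielding (b). The analogous argument applied to the algebraic irreducible decomposition gives (c), and then (a) follows from the reduction above.

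The main obstacle I expect is that Riemann-Hilbert is only complex analytic, not algebraic, so its compatibility with the Zariski topology, constructibility, and Zariski closure on both sides must be verified. The simplicity assumption is essential here: on the simple locus the de Rham moduli is smooth, and by Mochizuki's tame harmonic bundle theory (extending Simpson's non-abelian Hodge theory to the quasi-projective setting), the stratifications of $M_{dR}^{s,\log}$ by residue type and of $M_B^s$ by monodromy type are compatible up to the algebraic-on-each-stratum exponential map along $T$. Once this compatibility is established, the transfer effectively reduces to applying the projective case from \cite{BW} fiberwise over $T$, together with a descent argument for the algebraic residue parameter.
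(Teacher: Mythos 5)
Your proposal rests on the claim that Riemann--Hilbert gives a complex analytic \emph{isomorphism} $RH\colon M_{dR}^{s,\log}(\bar X,D,r)\xrightarrow{\sim}M_B^s(X,r)$. This is false for quasi-projective $X$: the $RH$ map from the de Rham moduli of simple logarithmic connections to $M_B^s(X,r)$ is surjective but far from injective, since infinitely many choices of residues (differing by integers) produce the same local system. It becomes a local isomorphism only after restricting to the ``good'' locus $M_{DR}^{good}$, where no two residue eigenvalues differ by a nonzero integer --- and that locus is merely an analytic open subset, the complement of a locally finite \emph{countable} union of Zariski closed sets, so it is not Zariski open and one cannot work with it algebraically. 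The analytic isomorphism you posit is available only when $X$ is projective (Proposition \ref{propGood}(2)), which is exactly the case already treated in \cite{BW}.

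This is not a cosmetic issue: your step ``$\overline{\widetilde S}^\sigma = \overline{\widetilde{S^\sigma}}$ algebraically, push forward via $\mathrm{RH}$'' does not go through, because $\widetilde S = RH^{-1}(S)$ is an analytic, not algebraic, preimage, and $RH$ is not algebraic. The essential ingredient the paper supplies, and which is missing from your argument, is the bialgebraicity result (Proposition \ref{propBia}, quoted from \cite{BW}): for a moduli-absolute $K$-constructible $S$, the Euclidean closure of \emph{each analytic irreducible component} of $RH^{-1}(S)$ is Zariski closed in $M_{DR}(\bar X/D,r)(\bC)$. Once that is in hand, the paper shows that $p_\sigma$ matches up these (algebraic) closures of analytic components (Lemma \ref{lemAICsigma}), then transports closure, Boolean operations, and irreducible components across $RH_0\colon M_{DR}^{good}\twoheadrightarrow M_B^s$ using only its property of being a surjective analytic local isomorphism (Lemmas \ref{lemRH0}, \ref{corclosure}, \ref{lemwq}, \ref{lemSinga}, \ref{lemirredcom}), carefully handling the countable union of Zariski closed sets removed to form the good locus. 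Your sketch of (b) and (c) therefore needs to be replaced by this bialgebraicity-plus-local-isomorphism argument; your reduction of (a) to (b) and (c), on the other hand, matches Lemma \ref{lemwq} and is fine. The appeal to Mochizuki's tame harmonic bundle theory and a ``fiberwise over $T$'' reduction to the projective case does not appear and is not needed; the relevant input is instead Nitsure--Sabbah and Nitsure's local isomorphism statement and the bialgebraicity of $RH^{-1}(S)$.
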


This implies for example that the irreducible components of $M_B^s(X,r)$ are in bijection with those of $M_B^s(X^\sigma,r)$. At the moment we cannot prove this, nor A1, for $M_B(X,r)$.

Next we address the submoduli $M_B^{cohrig}(X,r)$ of $M_B^s(X,r)$ consisting of {\it cohomologically rigid} simple local systems, when $X$ has a fixed good compactification. These are local systems that  are uniquely, even at infinitesimal level, determined by the conjugacy classes of the local monodromy at infinity and  by the determinant, see Definition \ref{defChrls}.  It follows from \cite{BW} that $M_B^{cohrig}(X,r)$ is absolute $\bbQ$-constructible in $M_B(X,r)$. The main results are:

\begin{thm}\label{thmChR} Consider Conjecture A for $M_B^{cohrig}(X,r)$. Then Conjectures A1, A2, A4 hold. Moreover, Conjecture A3 is equivalent to Simpson's conjecture that  cohomologically rigid simple local systems  with quasi-unipotent determinant and conjugacy classes at infinity are of geometric origin.
\end{thm}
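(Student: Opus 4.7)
The plan is to exploit the finiteness arising from cohomological rigidity. Fix a good compactification $\bar X$ of $X$ with simple normal crossings boundary divisor $D=\bigcup_{i=1}^m D_i$. Each $D_i$ yields a conjugacy class of local monodromy, and together with the determinant these assemble into a Galois-equivariant algebraic morphism
\[
\mu\colon M_B^{cohrig}(X,r)\longrightarrow N:=\Big(\prod_{i=1}^m \sC_r\Big)\times M_B(X,1),
\]
where $\sC_r:=\GL_r(\bC)/\!/\GL_r(\bC)\cong \bA^{r-1}\times\mathbb{G}_m$ is the affine variety of semisimple conjugacy classes. Cohomological rigidity (Definition \ref{defChrls}) is precisely the vanishing of the tangent map along the fibers of $\mu$; combined with properness of moduli with bounded local monodromy data at infinity, this upgrades $\mu$ to a finite morphism. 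The first task is to verify that $\mu$ commutes with conjugation $(-)^\sigma$, which follows from the fact that cohomological rigidity is expressible in terms of parabolic algebraic de Rham cohomology and hence preserved under $\sigma\in Gal(\bC/\bQ)$.

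\textbf{A1 and A4.} By \cite{BW}, Conjecture A holds for $M_B(X,1)$ (rank 1) and for affine tori, and the same statements transfer to $\sC_r$ via the finite quotient $\mathbb{G}_m^r\to\sC_r$ given by the elementary symmetric functions. Hence A1 holds for the product $N$. Since $\mu$ is finite and Galois-equivariant, images and preimages under $\mu$ preserve absolute $\bbQ$-constructibility and absolute $\bbQ$-closedness, and irreducible components are preserved under finite maps; this transfers A1 to $M_B^{cohrig}(X,r)$. For A4, the restriction of $\mu$ to the Zariski closure of a set of absolute $\bbQ$-points is a finite morphism onto an absolute $\bbQ$-closed subset of $N$; this finite morphism serves as the required $\bbQ$-pseudo-isomorphism, and restricts to a bijection on absolute $\bbQ$-points by the characterization established in the next step.

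\textbf{A2 and the A3 equivalence.} The central technical step is to prove that the absolute $\bbQ$-points of $M_B^{cohrig}(X,r)$ are exactly the cohomologically rigid simple local systems with quasi-unipotent determinant and quasi-unipotent local monodromy conjugacy classes at infinity. Indeed, by the rank 1 and toric cases of \cite{BW}, the absolute $\bbQ$-points of $M_B(X,1)$ are the torsion characters, and the absolute $\bbQ$-points of $\sC_r$ are the quasi-unipotent classes; hence the absolute $\bbQ$-points of $N$ are precisely the quasi-unipotent monodromy data. Finiteness and Galois-equivariance of $\mu$ then identify absolute $\bbQ$-points of $M_B^{cohrig}(X,r)$ with those in the image of $\mu$. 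Granting this, A2 follows by lifting Zariski density of absolute $\bbQ$-points from $N$ to $M_B^{cohrig}(X,r)$ along the finite map $\mu$. For A3, the direction \emph{geometric origin} $\Rightarrow$ \emph{absolute $\bbQ$-point} is the standard Galois-invariance of perverse cohomology; the reverse direction becomes precisely Simpson's conjecture for cohomologically rigid simple local systems with quasi-unipotent data, which completes the equivalence.

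\textbf{Main obstacle.} The delicate step will be establishing that $\mu$ is genuinely finite onto its image (not merely quasi-finite), uniformly across all Galois conjugates $X^\sigma$, and that the finite cover $\mathbb{G}_m^r\to\sC_r$ really transports absolute $\bbQ$-points as claimed. Quasi-finiteness is immediate from cohomological rigidity, but properness requires careful control of limits of rigid simple local systems when local monodromy data degenerates; here one must use simplicity plus rigidity to rule out stable limits outside $M_B^{cohrig}$. The analysis of $\sC_r$ reduces to the torus case after passing through the symmetric cover, but one must track the Galois action on eigenvalues. Once these two compatibilities are secured, the remainder of the argument is a systematic transfer via $\mu$ from the already-understood base $N$.
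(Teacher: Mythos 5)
Your outline identifies the right invariant — the map sending a local system to its determinant and local monodromy data at infinity (the paper's $\det\times mon$, your $\mu$) — and the right base (rank one plus tori) where the conjecture is known. The equivalence of A3 with Simpson's conjecture is also argued in essentially the correct way. But there are three substantive gaps.

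\textbf{Finiteness of $\mu$ is neither established nor needed.} You upgrade quasi-finiteness to a \emph{finite} morphism, acknowledging this requires properness and ruling out degenerating limits — and you flag this yourself as the main obstacle. The paper never needs this. Proposition~\ref{propRigo}(1) gives only that each fiber of $\det\times mon$ is finite; the arguments for A2 and A4 then proceed by dimension counting ($\dim S = \dim (\det\times mon)(S)$), not by properness of $\mu$. If you try to insist on finiteness, you will be stuck exactly where you expect: there is no obvious reason that limits of rigid simple local systems with degenerating boundary data stay inside $M_B^{cohrig}$.

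\textbf{A1 cannot be transferred across a quasi-finite (or even finite) map alone.} You claim that ``images and preimages under $\mu$ preserve absolute $\bbQ$-constructibility/closedness, and irreducible components are preserved under finite maps.'' Neither claim carries A1 across. If $S\subset M_B^{cohrig}$ is absolute $\bbQ$-closed, then $\mu(S)$ is $\bbQ$-constructible in $N$ (Lemma~\ref{lemMinf}), and $\overline{\mu(S)}$ is absolute $\bbQ$-closed in $N$; but $\mu^{-1}(\overline{\mu(S)})$ is typically strictly larger than $\overline{S}$, and to single out $\overline{S}$ as absolute you already need the ``irreducible components of absolute closed sets are absolute'' part of A1 — which is what you are trying to prove. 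The paper avoids this circularity entirely: A1 for $M_B^{cohrig}$ is inherited from Theorem~\ref{propA1si} (A1 for $M_B^s(X,r)$), which is proved independently via the analytic $RH$ local isomorphism onto $M_{DR}^{good}$. Your proposal never invokes that theorem, and your substitute does not work.

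\textbf{The A4 argument does not produce a pseudo-isomorphism.} A pseudo-isomorphism must be a bijection (with constructible graph). The map $\mu$ has fibers of cardinality possibly greater than one, so it cannot serve. What the paper actually does is show, using A1 for $M_B^{cohrig}$ plus the torsion-translated-subtori structure of the image, that the Zariski closure $S$ in question is itself absolute $\bbQ$-closed — so A4 is satisfied with the identity map and $X'=X$, $r'=r$. Your write-up of A4 should be replaced by that dimension-count argument.

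One smaller caution: ``absolute $\bbQ$-points of $\sC_r$'' is not a priori defined, since $\sC_r$ is not a Betti moduli space. The paper makes this precise by identifying $\sC_r\cong(\bC^*)^{(r)}$ with a finite quotient of the rank-one Betti moduli of $(\bC^*)^r$ and routing through the exponential/residue diagram~(\ref{eqDlf}); you gesture at this via the cover $\mathbb{G}_m^r\to\sC_r$, but the Galois-compatibility of that identification needs the explicit $res/Exp/root$ bookkeeping.
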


Since Simpson's conjecture is proved in the curve case by  \cite{katz96} and in the rank 2 case by \cite{CS}, a direct consequence is:

\begin{thm}\label{thmCACR}
Conjecture A holds for $M_B^{cohrig}(X,r)$ if $X$ is a curve or if the rank $r=2$. 
\end{thm}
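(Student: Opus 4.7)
The plan is to combine Theorem \ref{thmChR} with the known cases of Simpson's conjecture. By Theorem \ref{thmChR}, parts A1, A2, A4 of Conjecture \ref{conj1} hold unconditionally for $M_B^{cohrig}(X,r)$, and A3 is equivalent to Simpson's conjecture asserting that every cohomologically rigid simple local system on $X$ with quasi-unipotent determinant and quasi-unipotent conjugacy classes at infinity is of geometric origin. Hence to prove Theorem \ref{thmCACR} it suffices to establish this form of Simpson's conjecture in the two listed situations.

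For the curve case, I would invoke Katz's theorem from \cite{katz96}: every such cohomologically rigid irreducible local system is produced by a finite sequence of middle convolutions and tensor twists starting from rank one local systems. Since each of these operations preserves being of geometric origin, and since rank one local systems with quasi-unipotent monodromy at infinity are of geometric origin (e.g.\ by the rank one case already handled in \cite{BW}), the conclusion follows. For the rank two case on an arbitrary smooth complex quasi-projective variety $X$, I would appeal to the theorem of Corlette--Simpson \cite{CS}, which establishes precisely that rank two cohomologically rigid simple local systems satisfying the required quasi-unipotency conditions are of geometric origin.

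The substantive work is already contained in Theorem \ref{thmChR}; for Theorem \ref{thmCACR} itself the only mild obstacle is bookkeeping: one must check that the notion of rigidity used in \cite{katz96} and the hypotheses placed on local monodromy in \cite{CS} match Definition \ref{defChrls} and the formulation of Simpson's conjecture appearing in Theorem \ref{thmChR}. Once this compatibility is verified, A3 holds in both the curve case and the rank two case, and combining with A1, A2, A4 from Theorem \ref{thmChR} yields the full Conjecture \ref{conj1} for $M_B^{cohrig}(X,r)$ under the stated hypotheses.
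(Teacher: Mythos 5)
Your proposal is correct and matches the paper's argument exactly: Theorem \ref{thmChR} reduces A3 to Simpson's conjecture and yields A1, A2, A4 unconditionally, and the paper, like you, then cites \cite{katz96} for the curve case and \cite{CS} for rank two to conclude. The paper presents this as an immediate corollary with no further argument, so the elaboration you give on Katz's middle-convolution algorithm, while accurate, is more detail than the paper supplies.
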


Same proof yields:

\begin{thm}\label{thmP3pts}
Conjecture A holds for $M_B(\bP^1\setminus\{0,1,\infty\},2)$.
\end{thm}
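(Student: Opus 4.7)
The plan is to exploit the statement of the preceding Theorem~\ref{thmCACR} by decomposing $M_B(X,2)$, where $X=\bP^1\setminus\{0,1,\infty\}$, into a simple stratum and a strictly semisimple stratum, verifying Conjecture~A on each piece, and then gluing.

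First I would show that on $X=\bP^1\setminus\{0,1,\infty\}$ every simple rank~$2$ local system is cohomologically rigid, so that $M_B^s(X,2)=M_B^{cohrig}(X,2)$ as sets. This is a standard Deligne--Simpson type dimension count: for three punctures, rank $2$, and semisimple conjugacy classes at the punctures, the expected dimension of the character variety with fixed local monodromies is $2-2r^2+\sum_{i=1}^3\dim C_i=2-8+6=0$, i.e.\ rigidity. One then checks that the Euler characteristic defining cohomological rigidity (Definition~\ref{defChrls}) vanishes in this regime. Since this identification is intrinsic, it is Galois equivariant, so $(M_B^s(X,2))^\sigma=M_B^{cohrig}(X^\sigma,2)$ for every $\sigma$. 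Then Theorem~\ref{thmCACR} in the rank~$2$ case immediately gives Conjecture~A for the simple stratum.

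Next I would handle the strictly semisimple locus $M_B(X,2)\setminus M_B^s(X,2)$, whose points are direct sums $\cL_1\oplus\cL_2$ of rank~$1$ local systems. It is the image of the natural morphism
\[
M_B(X,1)\times M_B(X,1)\longrightarrow M_B(X,2),\qquad (\cL_1,\cL_2)\longmapsto \cL_1\oplus\cL_2,
\]
which is algebraic, defined over $\bQ$, and Galois equivariant; set-theoretically, it factors through $\mathrm{Sym}^2(M_B(X,1))$. Because Conjecture~A is known for rank~$1$ by \cite{BW}, and all four statements A1--A4 transport across equivariant finite morphisms of character varieties in the same way as in the cohomologically rigid case treated in Theorem~\ref{thmChR}, this yields Conjecture~A for the strictly semisimple stratum. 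Here A3 is direct: a direct sum of two rank~$1$ local systems is of geometric origin iff each summand is, reducing to the known rank~$1$ case.

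Finally I would glue: the simple stratum is absolute $\bbQ$-open in $M_B(X,2)$ and its complement is absolute $\bbQ$-closed, both are stable under $Gal(\bC/\bQ)$, and their union is the whole $M_B(X,2)$. Thus A1--A4 follow stratum by stratum from the two previous paragraphs, exactly as in the proof of Theorem~\ref{thmChR}. The main subtlety I expect is verifying A4 across the stratification: one must check that a $\bbQ$-pseudo-isomorphism coming from the rank~$1$ side assembles compatibly with the one coming from the cohomologically rigid locus, so that Zariski closures of sets of absolute $\bbQ$-points that straddle both strata are transported simultaneously. This is however formal once both pieces are handled, since the gluing map $M_B(X,1)\times M_B(X,1)\to M_B(X,2)$ is canonically defined.
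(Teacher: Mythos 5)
Your proposed route is genuinely different from the paper's, and it has a real gap in the gluing step. The paper's proof is a single observation: for $X=\bP^1\setminus\{0,1,\infty\}$ and $r=2$, the map $mon$ in diagram (\ref{eqDlf}) is a \emph{closed embedding} of $M_B(X,2)$ into $((\bC^*)^{(2)})^3$ (cf.\ Example~\ref{C^*}, where $M_B(X,2)\simeq V(x_{11}x_{12}x_{21}x_{22}x_{31}x_{32}-1)$). Combined with Lemma~\ref{lemMinf} (applicable in case (a) since $X$ is a curve, so $RH$ onto $M_B(X,2)(\bC)$ is surjective by Theorem~\ref{thmH21}(2)), this identifies absolute $\bar\bQ$-constructible subsets of $M_B(X,2)$ with $S_2^3$-equivariant Boolean combinations of torsion-translated subtori of $(\bC^*)^{6}$, for which all of A1--A4 are already known from the rank one case (Theorem~\ref{budurwang}). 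In effect, the paper reduces uniformly to rank one across \emph{both} strata at once.

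The gap in your stratification approach is that Conjecture~A holding separately on $M_B^s(X,2)$ and on $M_B(X,2)\setminus M_B^s(X,2)$ does \emph{not} formally imply it for the union. The Zariski closure in $M_B(X,2)$ of an absolute $\bar\bQ$-constructible subset $S\subset M_B^s(X,2)$ can accumulate boundary points in the strictly semisimple locus, and A1 on the simple stratum (Theorem~\ref{propA1si}) only says the closure of $S$ \emph{inside $M_B^s$} is absolute $\bar\bQ$-closed. The mechanism of the paper's proof of Lemma~\ref{corclosure} --- the local analytic isomorphism $RH_0:M_{DR}^{good}\twoheadrightarrow M_B^s$ --- does not see the boundary, so one cannot conclude that $\ol{S}\cap M_B^{ss}$ is absolute $\bar\bQ$-closed in $M_B^{ss}$. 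The same issue affects A4: a Zariski closure of absolute $\bar\bQ$-points straddling both strata is not controlled by the pseudo-isomorphisms you have on each stratum separately. Note that the paper explicitly leaves A1 for the full $M_B(X,r)$ open in general; the gluing you need is exactly the hard part, and calling it ``formal'' underestimates it. Your identification $M_B^s(X,2)=M_B^{cohrig}(X,2)$ (cf.\ Propositions~\ref{propRigo}(4) and~\ref{rk2}(3)) and the reduction of the strictly semisimple stratum to rank one via $M_B(X,1)^2\to M_B(X,2)$ are both sound, but to complete the argument you would essentially need to re-derive the global control that the closed embedding $mon$ provides automatically.
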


Moreover, we illustrate the intersection of the two cases considered in
Theorem \ref{thmCACR} by giving an explicit description of the moduli of rank $2$ rigid simple local systems on $\bP^1$ minus a finite set of points in Proposition \ref{rk2}.

For the proof of Theorem \ref{propA1si} we use the Riemann-Hilbert correspondence in moduli form. By using the de Rham moduli space of semistable logarithmic connections constructed by  Nitsure \cite{N} and Simpson \cite{S1, S2}, the correspondence takes the form of an analytic map $RH$ from the de Rham moduli to the Betti moduli. We follow \cite{BW} and translate absoluteness in terms of the analytic $RH$ map. However, \cite{BW} overlooked the fact that, beyond rank 1 or the case when $X$ is projective, $RH$ might not be surjective, although we do not know of any such example. For the lack of a proof of surjectivity, we therefore take the time to bring a small list of necessary corrections to \cite{BW}. To finish the proof of Theorem \ref{propA1si}, we then use the fact that $RH$ is a surjective local analytic isomorphism  from a good submoduli of the de Rham moduli onto the submoduli $M_B^s(X,r)$ of simple local systems, an observation going back to Nitsure-Sabbah \cite{NS} and Nitsure \cite{N2}. Next, the defining property of being cohomologically rigid will imply that the eigenvalues at infinity and the determinant give a description up to a quasi-finite morphism of absolute subsets of $M_B^{cohrig}(X,r)$ in terms of torsion-translated complex affine subtori in some $(\bC^*)^N$. We can then finish the proof of Theorem \ref{thmChR} using the results of \cite{BW}. 

A weak and strong analog of Conjecture A2 in an arithmetic context was proposed in \cite{EK2}. Thus \cite[Theorem A]{EK2} which proves the weak arithmetic version for curves bears some similarity to Theorem \ref{thmCACR}. Moreover, \cite[Theorem B]{EK2} which proves the strong arithmetic version, as well as its proof, bears similarity to Theorem \ref{thmP3pts} and its proof. It is however not clear to us if one can pass from the arithmetic or complex version to the other.

\smallskip
\noindent{\bf Acknowledgement.} We thank J. Ant\'onio, H. Esnault, J.-B. Teyssier,  B. Wang, and the referee for useful comments. The  authors were  supported by the grants STRT/13/005 and Methusalem METH/15/026 from KU Leuven, and G097819N and G0F4216N from FWO. H.W. was also sponsored by a CSC PhD scholarship.

\section{Notation}\label{secnot}
 Let $ K\subset \mathbb C$ be a subfield.
    A {\it $\bC$-scheme of finite type defined over $K$} is the base change to $\bC$ of a $K$-scheme of finite type.    A {\it morphism defined over $K$ } between $\bC$-schemes of finite type defined over $K$ is the base change to $\bC$ of a morphism of finite type $K$-schemes. A {\it $\bC$-subscheme is defined over $K$} if the embedding morphism is defined over $K$. Subschemes are assumed to be locally closed in this article.  For a scheme $Y$ over $K$, the set of $\bC$-points $Y(\mathbb C)=\text{Hom}_{Sch(K)}(\text{Spec}(\mathbb C), Y)$ agrees with the set of $\bC$-points of the base change of $Y$ to $\bC$. If $Y$ is of finite type over $K$, a {\it $K$-constructible} (respectively, {\it $K$-closed}) subset of $Y(\bC)$ is a finite union of sets of  $\mathbb C$-points of $K$-subschemes  (respectively, closed $K$-subschemes) of $Y$. A {\it $K$-pseudo-morphism} between between two $K$-constructible sets is a map whose graph is $K$-constructible. Thus two $K$-constructible sets are {\it $K$-pseudo-isomorphic} if there exist finite bijective partitions of each into sets of $\bC$-points of $K$-subschemes such that two parts corresponding to each other are isomorphic $\bC$-schemes and the isomorphism is defined over $K$.   A $\bZ$-valued function on a $K$-constructible set is a {\it $K$-constructible function} if it has finite image and the inverse image of every $k\in\bZ$ is $K$-constructible.

 Let $F$ be a functor from the category of $K$-schemes to the category of sets. A $K$-scheme $Y$ is a {\it fine moduli space for $F$} if it {\it represents} $F$, that is, there is a natural isomorphism of functors $F\ra \Hom(\_,Y)=:Y(\_)$. A $K$-scheme $Y$ {\it corepresents} $F$ if there exists a natural transformation $F\ra Y(\_)$ such that for every natural transformation $F\ra Y'(\_)$ into a $K$-scheme $Y'$, there is a factorization through a unique morphism of $K$-schemes $Y\ra Y'$. A $K$-scheme $Y$ is a {\it coarse moduli space for $F$} if it corepresents $F$ and the natural transformation gives a bijection $F(L)=Y(L)$ for every algebraically closed field extension $L$ of $K$. A scheme $Y$ {\it universally corepresents} $F$ if for every morphism of schemes $Z\ra Y$, $Z$ corepresents the fiber product functor $Z(\_)\times_{Y(\_)} F$.
 
If $G$ is a reductive algebraic group acting on a $\bC$-scheme $Y$, and if $Y\ra X$ is a $G$-invariant morphism of $\bC$-schemes, then $X$ is a {\it (universal) categorical quotient} if $X$ together with the induced natural transformation $F\ra X$ (universally) corepresents the quotient functor $F(T)=Y(T)/G(T)$ on $\bC$-schemes.

    For a $\bC$-scheme of finite type $X$, we denote by $X^{an}$ the associated complex analytic space and by $X(\bC)^{an}$  the associated reduced complex analytic space. An analytic subspace of $X$ will always mean one of $X^{an}$. If $X$ is reduced, $X^{an}=X(\bC)^{an}$. 
    
    Varieties will always be assumed irreducible.

\section{Betti moduli spaces}\label{defbetti} 
In this section we recall  basic facts about Betti moduli spaces. Let $X$ be a smooth complex algebraic variety.  We fix a base point $x_0\in X$. By the fundamental group $\pi_1(X,x_0)$ and by local systems (always finite-dimensional complex unless mentioned otherwise) on $X$, we always mean with respect to the underlying Euclidean topology, that is, on $X^{an}$.

Let $r>0$ be an integer. The {\it space of representations of rank $r$} of the  fundamental group $\pi_1(X,x_0)$ is 
	\[ R_B(X,x_0,r)\colonequals  \text{Hom}(\pi_1(X,x_0), \text{GL}_r),\]
the $\bC$-scheme representing the functor associating to a $\bC$-scheme $T$  the set  $$\Hom(\pi_1(X,x_0),\GL_r(\Gamma(T,\cO_T))).$$

	Since complex algebraic varieties have the homotopy type of finite-type CW complexes, the group $\pi_1(X,x_0)$ is finitely presented.
	Fixing generators $\gamma_1,\dots, \gamma_m$ of  $\pi_1(X,x_0)$, one has an embedding of the representation space as a closed affine subscheme of $\GL_r^{\times m}$ defined over $\bQ$, which on the complex points is		
	\begin{equation}\label{(2.1)}R_B(X,x_0,r)(\mathbb C)
	\hookrightarrow \text{GL}_r(\mathbb C)^{\times m},\;\;
	\rho\mapsto (\rho(\gamma_1),\dots, \rho(\gamma_m))\end{equation}
and the equations satisfied come from the relations among the generators.

The representation space $R_B(X,x_0,r)$ can also be interpreted as the fine moduli space of framed local systems of rank $r$ on $X$. A family over a $\bC$-scheme $T$ of framed local systems on $X$  is a locally  constant sheaf $L$ of $\Gamma(T,\cO_T)$-modules on $X^{an}$, together with an isomorphism $\xi:L|_{x_0}\xa{\sim}\Gamma(T,\cO_T)^{\oplus r}$.

	The group  $\text{GL}_r(\bC)$ acts on $R_B(X,x_0,r)$ by simultaneous conjugation.
	The \emph{moduli space of local systems of rank $r$}  is defined as the affine quotient of $R_B(X,x_0,r)$ with respect to this action,
	\[ M_{B}(X,r)=R_B(X,x_0,r)\sslash \text{GL}_r\coloneqq \text{Spec}\left(H^0\left(R_B(X,x_0,r), \mathcal O_{R_B(X,x_0,r)}\right)^{\text{GL}_r(\bC)}\right).\]
This is a universal categorical quotient, see  \cite[Proposition 6.1]{S2}.	The embedding (\ref{(2.1)}) descends to a closed embedding of affine schemes defined over $\bQ$	
	\begin{equation}\label{eqCl} 
	M_B(X,r) \hookrightarrow \text{GL}_r^{\times m}\sslash {\GL} _r.
	\end{equation}
The complex points of $M_B(X,r)$ are in one-to-one correspondence with the isomorphism classes of complex semisimple local systems of rank $r$ on $X$. We recall that a local system $L$ is called simple, or irreducible, if the associated representation $\pi_1(X,x_0)\ra\GL(L|_{x_0})$ is so, and $L$ is called semisimple if it is a direct sum of simple local systems. We denote by 
$$
q_B:R_B(X,x_0,r)\ra M_{B}(X,r)
$$
the quotient morphism. The fiber over a complex point $L\in M_B(X,r)(\bC)$ consists of all framed  $\bC$-local systems with semi-simplification isomorphic to $L$.

For a representation  $\rho\colon \pi_1(X,x_0)\to \text{GL}_r(\mathbb C)$ we denote the associated local system on $X$ by $ L_\rho$.  The corresponding class in $M_B(X,r)(\bC)$ will be sometimes denoted $[\rho]$.

The quotient map restricts to a morphism
$
q_B:R^s_B(X,x_0,r)\to M^s_B(X,r) 
$
on the open subschemes of $R_B(X,x_0,r)$ and $M_B(X,r)$, respectively, whose closed points correspond to simple local systems, since $R^s_B(X,x_0,r)$ is also the GIT-stable locus. 
	
We define  $$R_B(X,x_0):=\coprod_{r}R_B(X,x_0,r)\quad\text{and}\quad M_B(X):=\coprod_rM_B(X,r),$$ the disjoint unions of schemes.

\begin{example}\label{exClCurve}\label{C^*} $\;$

	(1) If $X=\bP^1\setminus\{D_1,\ldots, D_{m+1}\}$, the complement of $m+1>1$ distinct points, then (\ref{eqCl}) is an isomorphism.
	 
 (2)	(a) (\cite[Example 1.35]{ lubotzky1985varieties} ) If $x_{11}, x_{12}, x_{21}, x_{22}, x_{31}, x_{32}$ denote the pairwise-unordered coordinates on $((\bC^*)^{(2)})^3$, then one has isomorphisms
 $$
	\text{GL}_2(\mathbb C)^{\times 2}\sslash\text{GL}_2(\mathbb C)\xa{\sim}  V(x_{11}x_{12}x_{21}x_{22}x_{31}x_{32}-1) \xa{\sim}  \mathbb C^3\times (\mathbb C^*)^2
	$$
 with composition 
 \[[(g_1, g_2)]\mapsto (\tr(g_1), \tr(g_2), \tr(g_1g_2), \det(g_1)^{-1}, \det(g_2)^{-1}).\]

(b) (\cite{HW}) With $X=\bP^1\setminus\{0,1,\infty\}$, then
$$M_B(X,2)\setminus M_B^s(X,2)\simeq V(x_{11}x_{12}x_{21}x_{22}x_{31}x_{32}-1)\cap V\left(\prod_{i,j,k=1,2}(x_{1i}x_{2j}x_{3k}-1)\right).$$

\end{example}

\section{Absolute sets}

Absoluteness is an abstract categorical notion introduced in \cite{BW}. The absolute sets of local systems are subject to Conjecture A from the Introduction, also called the  Special Varieties Package Conjecture in \cite{BW}. The Betti moduli spaces serve as concrete ambient spaces for absolute sets of local systems.  We recall in this section the definition of absolute sets and some of their properties.

\subsection{Unispaces.}
One has first a categorical notion of constructible (respectively, closed) subset of $S$ of $\sC(\bC)$ relative to a functor  $$\sC: Alg_{ft, reg}(\bC)\to Set$$
from the category of finite type regular $\bC$-algebras to sets. Such $\sC$ is called a {\it unispace}. If $R\to R'$ is a morphism in $Alg_{ft, reg}(\bC)$ and $\cF _R\in \sC(R)$, the image of $\cF_R$ in $\sC(R')$ under the morphism $\sC(R)\to\sC(R')$ will be denoted $\cF_R\star_RR'$. We can think of $\cF_R$ as a family of objects from $\sC(\bC)$ parametrized by $\Spec(R)$.

Then $S\subset \sC(\bC)$ is called {\it constructible (respectively, closed) with respect to the unispace $\sC$} if for every  $R\in Alg_{ft, reg}(\bC)$ and every $\cF_R\in\sC(R)$, the subset of maximal ideals
$$
\{m \in \Spec(R)(\bC)\mid \cF_R\star_RR/m\in S\}
$$
is Zariski constructible (respectively, closed) in the complex variety associated to $R$. 

If $\sC'\to\sC$ is a natural transformation between unispaces and $S\subset\sC(\bC)$ is closed with respect to $\sC$, then the inverse image of $S$ in $\sC'(\bC)$ is closed with respect to $\sC'$.

One can also define the notion of a {\it morphism of unispaces} $\sC'\to\sC$ such that the inverse image of a constructible $S\subset \sC(\bC)$ is constructible with respect to $\sC'$,  and such that when $\sC, \sC'$ are represented by finite type $\bC$-schemes, a morphism of unispaces is equivalent to a pseudo-morphism, \cite[Section 2]{BW}. Moreover, there is a natural bijection between functions $\sC(\bC)\to \bZ$ that are constructible with respect to the unispace $\sC$ and morphisms from $\sC$ to the  unispace of $\bZ$-valued constructible functions
$$
\ul{\bZ}^{cstr}: R\mapsto\{\text{constructible maps }\Spec(R)(\bC)\to \bZ\}$$
with $\star_RR'$ being the composition with $\Spec(R')(\bC)\to\Spec(R)(\bC)$.

The simplest non-trivial unispace is the unispace of finite free modules
$$
Mod_{free}: R\mapsto \{\text{finite free }R-\text{modules}\}/_\simeq
$$
where $\star_RR'$ is the usual tensor product $\otimes_RR'$. In fact composition of the rank map on $Mod_{free}(\bC)$ with any bijection $\bN\xa{\sim}\bZ$ is a constructible function with respect to $Mod_{free}$, and it corresponds to an isomorphism of unispaces $Mod_{free}\xa{\sim}\ul{\bZ}^{cstr}$.

Thus for any map $f:\sC(\bC)\to\bZ$ which comes from a morphism of unispaces $\sC\to Mod_{free}$, and for any $k\in\bZ$, the subset $f^{-1}(k)$ is constructible with respect to $\sC$.

For a complex algebraic variety $X$ there is a natural transformation of unispaces
\be\label{eqIi}
LocSys_{free}(X,\_) \to D^b_c(X,\_).
\ee
The first unispace associates to $R\in Alg_{ft,reg}(\bC)$ the isomorphism classes of local systems $\cL_R$ of finite free $R$-modules on $X$, and for a morphism $R\to R'$ the operation $\star_RR'$ is the usual tensor product $\otimes_RR'$. The second unispace associates to $R$ the isomorphism classes of objects in the bounded derived category of constructible sheaves of finite type $R$-modules on $X^{an}$, and the operation $\star_RR'$ is the derived tensor product $\mathop{\otimes}^L_RR'$. The stability of $D^b_c(X,\_)$ under the derived tensor product is precisely the reason why in the definition of unispaces one restricts to regular and not arbitrary finite type $\bC$-algebras. The above natural transformation sends $\cL_R$ to itself as a complex concentrated only in degree 0.

It turns out that ``all" natural functors on derived categories $D^b_c(X,\bC)$ extend to give morphisms of unispaces, \cite[Section 5]{BW}. They are  {\it constructible functors}, that is, via taking inverse image they preserve constructibility with respect to the fixed unispaces. In fact most of the times these functors give natural transformations of unispaces, so they are {\it continuous functors}, that is, via taking inverse image they preserve closedness with respect to the fixed unispaces. ``All" here means a list of commonly used functors, see \cite[Section 5]{BW} and \cite{B-all}.

 One produces in this way many  subsets of $LocSys_{free}(X,\bC)=LocSys(X,\bC)$ 
that are constructible or closed with respect to the unispace $LocSys_{free}(X,\_)$ since
$
LocSys_{free}(pt,\_) = Mod_{free}. 
$
For any complex variety $Y$ with a natural transformation
$
Y(\_)\to LocSys_{free}(X,\_),
$
one produces thus constructible or Zariski closed subsets of $Y(\bC)$ via inverse image. Taking $Y=R_B(X,x_0)$ and projecting further via the quotient map $q_B:R_B(X,x_0)\to M_B(X)$, one produces constructible or Zariski closed subsets of $M_B(X)(\bC)$. For example, the cohomology jump locus
$$
\{L\in M_B(X,r)(\bC)\mid \dim H^i(X,L)\ge k\}
$$ 
is Zariski closed in $M_B(X,r)$ for all $r, i, k\in\bN$, since it is obtained from the subset
$$
\{L\in LocSys(X,\bC)\mid \dim H^i(X,L)\ge k\}
$$ 
which is closed with respect to the unispace $LocSys_{free}(X,\_)$. 
 
If $K$ is a subfield of $\bC$, all the notions from  above, which correspond to the case $K=\bC$, can be updated to be defined over $K$. The category of $\bC$-unispaces defined over $K$ contains as a full subcategory the category of $\bC$-schemes of finite type defined over $K$ together with $K$-pseudo-morphisms. For example, the unispaces from (\ref{eqIi}) are defined over $\bQ$. One produces this way many subsets of $LocSys(X,\bC)$ that are $\bQ$-constructible or $\bQ$-closed with respect to the unispace $LocSys_{free}(X,\_)$, if one uses natural functors that are defined over $\bQ$ in the above sense, otherwise the field of definition has to be enlarged. For example, the operation on $D^b_c(X,\bC)$ given by the derived tensor with the complexification $M\otimes_K\bC$ of a $K$-local system $M$ lifts to a morphism defined over $K$, but not over a strictly smaller subfield of $K$, of unispaces defined over $K$. In fact, $(\_)\otimes_\bC^L M$ is a $K$-continuous functor on $D^b_c(X,\bC)$ with respect to the unispace $D^b_c(X,\_)$ defined over $K$, \cite[Theorem 5.14.3]{BW}. This gives for example that
$$
\{L\in M_B(X,r)(\bC)\mid \dim H^i(X,L\otimes_KM)\ge k\}
$$
 is Zariski $K$-closed but not $\bQ$-closed if $\bQ\neq K$.

\subsection{Absolute sets.} We assume now that $X$ is a smooth complex algebraic variety.
For each $\sigma\in \text{Gal}(\mathbb C/\mathbb Q)$, let $X^\sigma\ra X$ be the base change of $X$ via $\sigma$. This map  is typically not continuous in the Euclidean topology since the only continuous automorphisms of $\bC$ are the identity and the complex conjugation. However it is a homeomorphism for the Zariski topology, and it is an isomorphism of $\bQ$-schemes. Consider the diagram
 \be\label{eqDrHg}
\begin{gathered}
\xymatrix{
   	D^b_{rh}(\sD_X)   \ar[d]_{RH}  \ar[rr]^{p_\sigma} & & D^b_{rh}(\sD_{X^\sigma})\ar[d]_{RH}\\
   	 D^b_c(X,\bC)&  &  D^b_c(X^\sigma,\bC).
}
\end{gathered}
\ee
Here $D^b_{rh}(\sD_X)$ is the bounded derived category of algebraic regular holonomic $\sD_X$-modules, $RH$ is the  Riemann-Hilbert equivalence, and $p_\sigma$  is the pullback of $\sD_X$-modules under the base change over $\sigma$, also an equivalence of categories. On isomorphism classes the maps induced by the diagram (\ref{eqDrHg}) are bijections.

For a subset $T$ of the isomorphism classes of objects in $D^b_c(X,\bC)$, define
\be\label{eqSss}
T^\sigma := RH\circ p_\sigma\circ RH^{-1} (T), 
\ee
a subset of the  isomorphism classes of objects in $D^b_c(X^\sigma,\bC)$.

\begin{defn}\label{defAk} Let $\sC:\sigma\mapsto \sC_{X^\sigma}$ be the assignment to every $\sigma\in Gal(\bC/\bQ)$ of a sub-unispace of $D^b_c(X^\sigma,\_)$ defined over $K$, see \cite[3.3]{BW}. We also assume that for all $\sigma\in Gal(\bC/\bQ)$,  $\sC_X(\bC)^\sigma =\sC_{X^\sigma}(\bC)$. Then a subset $T\subset \sC_X(\bC)$ is {\it absolute $K$-constructible (respectively, absolute $K$-closed) with respect to  $\sC$}, or simply {\it with respect to the unispace $\sC_X$} if the assignment $\sC$ is clear,  if the set 
$T^\sigma \subset \sC_{X^\sigma}(\bC)$ is $K$-constructible (respectively, $K$-closed) with respect to $\sC_{X^\sigma}$ for all $\sigma\in Gal(\bC/\bQ)$. 




\end{defn}

In practice, ``all" natural functors on  (bounded derived) categories of constructible sheaves on $X^{an}$ have  algebraic $\sD$-module counterparts. Their source and target dictate in each case the choice of unispace $\sC_X$.  Compatibility with the algebraic counterparts ensures that absolute $K$-constructibility (respectively, absolute $K$-closedness) is preserved by taking inverse image, in which case the functors are called {\it absolute $K$-constructible (respectively, absolute $K$-continuous) functors}. Here $K$ is mostly $\bQ$, unless we deal with a functor like $(\_)\otimes_\bC^LM$ above with $M$ defined over $K$ instead of $\bQ$, see \cite[Section 6]{BW}.

\subsection{Absolute sets of local systems}\label{subAsl} Let $X$ be a smooth complex algebraic variety.   Denote by $$
l: R_B(X,x_0)(\bC)\to LocSys(X,\bC)
$$
the map forgetting the frame and keeping the isomorphism class of the local system. Note that $l$ lifts to a natural transformation 
$
l: R_B(X,x_0)\to LocSys_{free}(X,\_)
$
defined over $\bQ$ of unispaces defined over $\bQ$. 

Recall that there is the GIT quotient map
$
q_B: R_B(X,x_0)\to M_B(X).
$
Then $q_B(l^{-1}(T))\subset M_B(X)(\bC)$ is the set of isomorphism classes of the semisimplifications of the local systems in $T$ if $T\subset LocSys(X,\bC)$.


If $M\subset M_B(X)(\bC)$, we let $$\tilde M:=l(q_B^{-1}(M))\subset LocSys(X,\bC)$$ be the subset consisting of isomorphism classes of local systems with semisimplifications in $M$.  We define the unispace
$$
\sM_X : R\mapsto \{\cL_R\in LocSys_{free}(X,R)\mid \cL_R\otimes_R R/m\in \tilde M,\; \forall m\in\Spec(R)(\bC)\}
$$
together with the usual base change. Then $\sM_X(\bC)=\tilde M$ and $\sM_X$ is a sub-unispace of $LocSys_{free}(X,\_)$ defined over $K$. If $M=M_B(X)(\bC)$, then $\sM_X=LocSys_{free}(X,\_)$. We let $\sM_{X^\sigma}$ be unispace associated to $M^\sigma$, where $M^\sigma$ is defined as in (\ref{eqSss}). Note that $(\_)\,\tilde{  }$ and $(\_)^\sigma$ commute. The assignment $\sM:\sigma\mapsto \sM_{X^\sigma}$ allows us to define what it means for  $S\subset \tilde M$ to be an  absolute $K$-constructible (respectively, absolute $K$-closed) with respect to $\sM$, by Definition \ref{defAk}.

\begin{defn}\label{defBWas}  Let $M\subset M_B(X)(\bC)$. A subset $S$ of  $M$ is called {\it absolute $K$-constructible (respectively, absolute $K$-closed) in $M$} if $S=q_B(l^{-1}(T))$ for some subset $T$ of $\tilde M$ such that $T$ is absolute $K$-constructible (respectively, absolute $K$-closed) with respect to the unispace $\sM_X$. If $M=M_B(X)(\bC)$, we simply say that $S$ is {\it absolute $K$-constructible (respectively, absolute $K$-closed)}, cf.  \cite[Definition 7.3.1]{BW}. 
\end{defn}

We will show the following unispace-free characterization:

\begin{prop}\label{lemAnlg} Let $M\subset M_B(X)(\bC)$ be  absolute $K$-constructible.
Let $S\subset M$. Then $S$ is absolute $K$-constructible (respectively, absolute $K$-closed) in $M$ iff $S^\sigma$ is $K$-constructible (respectively, $K$-closed) in $M^\sigma$ for all $\sigma\in Gal(\bC/\bQ)$. 
\end{prop}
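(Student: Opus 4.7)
The plan is to fix $\sigma\in Gal(\bC/\bQ)$ and bridge the unispace definition with the scheme-theoretic notion of $K$-constructibility via the universal family on the representation variety $R_B(X,x_0,r)$. Reducing to a single $\sigma$ is legitimate because the operations $T\mapsto\tilde T$ and $T\mapsto T^\sigma$ commute: the functor $p_\sigma$ in (\ref{eqDrHg}) is an equivalence of categories that preserves simple objects, direct sums and Jordan--H\"older filtrations, hence commutes with semi-simplification, and the assignment $\sM$ is Galois-equivariant by construction. For the ``if'' direction, assuming $S^\sigma$ is $K$-constructible (resp.\ $K$-closed) in $M^\sigma$ for every $\sigma$, I would take $T=\tilde S$. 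Any $\cL_R\in\sM_{X^\sigma}(R)$, after fixing a frame at $x_0$, yields a representation $\pi_1(X^\sigma,x_0)\to\GL_r(R)$, a $\bQ$-morphism $f:\Spec(R)\to R_B(X^\sigma,x_0,r)$, and by composition with $q_B$ a $\bQ$-morphism $g:\Spec(R)\to M_B(X^\sigma,r)$ whose image lies in $M^\sigma$ because the fibres of $\cL_R$ lie in $\widetilde{M^\sigma}$. The set $\{m\in\Spec(R)(\bC):\cL_R\star_R R/m\in\tilde S^\sigma\}$ equals $g^{-1}(S^\sigma)$, which is $K$-constructible (resp.\ $K$-closed) in $\Spec(R)$ by pulling back along $g$; so $\tilde S$ is absolute with respect to $\sM_X$ and $S=q_B(l^{-1}(\tilde S))$ is absolute in $M$ by Definition \ref{defBWas}.

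For the ``only if'' direction I would write $S=q_B(l^{-1}(T))$ with $T$ absolute $K$-constructible (resp.\ $K$-closed) with respect to $\sM_X$, fix $\sigma=\mathrm{id}$, and pull the unispace condition back to the universal family. Since $M$ is absolute $K$-constructible, $q_B^{-1}(M)\subset R_B$ is $K$-constructible, so by Noetherian induction on dimension I can stratify it by finitely many smooth affine $K$-subschemes $U_i=\Spec(R_i)$ with $R_i$ regular finite type $K$-algebras. On each $U_i$ the universal representation restricts to $\cL_{R_i}\in\sM_X(R_i)$, and the hypothesis gives that $l^{-1}(T)\cap U_i$ is $K$-constructible (resp.\ $K$-closed) in $U_i$. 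In the constructible case one takes the finite union to get $l^{-1}(T)\subset R_B$ $K$-constructible, and applies $K$-Chevalley to the $\bQ$-morphism $q_B$ to conclude that $S=q_B(l^{-1}(T))$ is $K$-constructible in $M_B$, hence in $M$. In the closed case, setting $Z:=q_B(\overline{l^{-1}(T)}^{K,R_B})$, the $K$-closure $\overline{l^{-1}(T)}^{K,R_B}$ is $\GL_r$-invariant because $l^{-1}(T)$ is, and by the GIT property that closed $\GL_r$-invariant $K$-subsets of the affine $R_B$ descend to closed $K$-subsets of $M_B=R_B\sslash\GL_r$, $Z$ is $K$-closed in $M_B$; I would then check $S=M\cap Z$.

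The inclusion $S\subset M\cap Z$ is immediate. The main obstacle I expect is the reverse inclusion in the closed case: for $L'\in\overline{l^{-1}(T)}^{K,R_B}\cap q_B^{-1}(M)$ one wants to produce a smooth, geometrically irreducible $K$-curve $C\subset q_B^{-1}(M)$ through $L'$ meeting $l^{-1}(T)\cap U_i$ in infinitely many $\bC$-points for some stratum $U_i$. Pulling the universal family back to $C$ lands in $\sM_X$ by construction, so the $\sM_X$-closedness of $T$ forces $\{c\in C:\cL_C\star c\in T\}$ to be $K$-closed in $C$; having infinitely many $\bC$-points in this set, together with geometric irreducibility of $C$, forces $C\subset l^{-1}(T)$ and hence $L'\in l^{-1}(T)$. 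The technical difficulty is that $q_B^{-1}(M)$ is only $K$-constructible and not $K$-closed, so a priori curves through $L'$ can escape $q_B^{-1}(M)$ and the pullback of the universal family may fail to land in $\sM_X$; one handles this by using that $q_B^{-1}(M)$ contains a dense Zariski-open subset of any irreducible $K$-subvariety of $R_B$ meeting $l^{-1}(T)$ densely, so generic curves through $L'$ stay in $q_B^{-1}(M)$ away from a finite set, after which one normalizes to ensure geometric irreducibility.
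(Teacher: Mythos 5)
Your ``if'' direction and the constructible half of the ``only if'' direction are sound and close in spirit to the paper's Lemmas \ref{lemCrT}, \ref{lemMAA}, \ref{lemMAA-2}, \ref{lemTil} (pull back via the classifying morphism to $R_B$, then push forward by $q_B$ using Chevalley). The reduction to $\sigma=\mathrm{id}$ via commutativity of $(\_)\,\tilde{\ }$ and $(\_)^\sigma$ is also what the paper does. The gap is in the closed case of the ``only if'' direction, and your curve argument does not repair it.

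Concretely: your stratification of $q_B^{-1}(M)$ into $K$-locally-closed smooth pieces $U_i$ only yields that $l^{-1}(T)\cap U_i$ is $K$-closed in each $U_i$. That is strictly weaker than $l^{-1}(T)$ being of the form $q_B^{-1}(M)\cap W$ with $W$ $K$-closed in $R_B$ (stratum-wise closedness allows a non-closed set such as $\mathbb{A}^1\setminus\{0\}$ inside the two-stratum decomposition $(\mathbb{A}^1\setminus\{0\})\sqcup\{0\}$). To bridge that gap you propose to produce a smooth geometrically irreducible $K$-curve $C\subset q_B^{-1}(M)$ through the problematic point $L'\in \overline{l^{-1}(T)}^{K}\cap q_B^{-1}(M)$ and meeting $l^{-1}(T)$ infinitely often. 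But $L'$ is an arbitrary $\bC$-point; if its residue field has transcendence degree at least two over $K$, \emph{no} $K$-curve passes through $L'$, so the construction fails at the very first step, and replacing $C$ by a $\bC$-curve destroys the $K$-structure you need in order to invoke $K$-closedness of $T$ with respect to $\sM_X$. Your proposed patch (``generic curves through $L'$ stay in $q_B^{-1}(M)$ away from a finite set'') addresses only the escape from $q_B^{-1}(M)$, not this $K$-rationality obstruction.

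The paper avoids this altogether. It first replaces $T$ by $\tilde S=l(q_B^{-1}(S))$ (Lemma \ref{lemTil}), which is legitimate because $\tilde T=\tilde S$ is again absolute $K$-constructible/closed with respect to $\sM_X$ by Lemma \ref{lemCrT}(4) and Lemma \ref{lemMAA-2}. The point of this replacement is that $l^{-1}(\tilde S)=q_B^{-1}(S)$ is automatically $\GL_r$-saturated. Then, rather than reasoning stratum by stratum, the paper uses that $l$ restricts to a natural transformation $q_B^{-1}(M)(\_)\to\sM_X$, so $K$-closedness of $\tilde S$ with respect to $\sM_X$ directly gives $K$-closedness of $q_B^{-1}(S)$ with respect to the unispace $q_B^{-1}(M)(\_)$ (tested against \emph{arbitrary} $K$-families landing in $q_B^{-1}(M)$, not only families over the strata). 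Combined with $q_B$ being a universal categorical quotient defined over $\bQ$, this yields that $S$ is $K$-closed in $M$. You should abandon the curve argument, use the identity $l^{-1}(\tilde S)=q_B^{-1}(S)$ and the full unispace-level closedness, and push forward by the quotient.
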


\begin{rmk} Note that taking $M=M_B(X)(\bC)$ we have a characterization of absolute subsets in $M_B(X)(\bC)$. However, without the unispace approach there was a priori no reason why one can produce subsets with this characterization from general functors on derived categories which might lack Betti or de Rham moduli spaces. 
\end{rmk}

Since (\ref{eqDrHg}) induces bijections on isomorphism classes, a consequence of the proposition is:

\begin{lemma}\label{lemOps} The following hold in $M_B(X,r)(\bC)$:

- The intersection of (possibly infinitely-many) absolute $K$-closed subsets  is absolute $K$-closed.

- The union of finitely-many absolute $K$-closed subsets is absolute $K$-closed. 

- The intersection and the union of finitely-many absolute $K$-constructible subsets is absolute $K$-constructible. 

- The complement of an absolute $K$-constructible subset in an absolute $K$-constructible subset is an absolute $K$-constructible set.
\end{lemma}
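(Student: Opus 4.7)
The plan is to deduce all four statements from Proposition \ref{lemAnlg} applied with $M=M_B(X,r)(\bC)$, together with the elementary observation that the operation $T\mapsto T^\sigma$ defined in (\ref{eqSss}) is a bijection between subsets of isomorphism classes of objects in $D^b_c(X,\bC)$ and in $D^b_c(X^\sigma,\bC)$, since it is the composition $RH\circ p_\sigma\circ RH^{-1}$ of three bijections arising from the commutative diagram (\ref{eqDrHg}). In particular, for each $\sigma\in Gal(\bC/\bQ)$, the map $(\_)^\sigma$ commutes with arbitrary set-theoretic unions, intersections, and complements.

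First I would verify that $M_B(X,r)(\bC)$ is itself absolute $K$-constructible. Taking $S=M=M_B(X,r)(\bC)$ in Proposition \ref{lemAnlg}, we need only observe that $M^\sigma=M_B(X^\sigma,r)(\bC)$ is tautologically $K$-constructible in itself. Therefore Proposition \ref{lemAnlg} applies to any $S\subset M_B(X,r)(\bC)$ and gives the clean characterization: $S$ is absolute $K$-constructible (respectively, absolute $K$-closed) if and only if $S^\sigma$ is $K$-constructible (respectively, $K$-closed) in $M_B(X^\sigma,r)(\bC)$ for every $\sigma$.

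Next, for each fixed $\sigma$, the four properties reduce to standard scheme-theoretic facts about $K$-constructible/closed subsets of the finite-type $\bQ$-scheme $M_B(X^\sigma,r)$: arbitrary intersections of $K$-closed subsets are $K$-closed (by Noetherianity of the underlying topological space of a finite-type $K$-scheme, any descending intersection stabilizes to a finite one, which is a finite union of closed $K$-subschemes); finite unions of $K$-closed subsets are $K$-closed by definition; finite unions and intersections of $K$-constructible subsets are $K$-constructible by the elementary formal properties of constructible subsets in finite-type $K$-schemes; and the set-theoretic difference of two $K$-constructible subsets is $K$-constructible.

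Combining the commutation identities
\[
\Bigl(\bigcap_i S_i\Bigr)^\sigma=\bigcap_i S_i^\sigma,\qquad \Bigl(\bigcup_i S_i\Bigr)^\sigma=\bigcup_i S_i^\sigma,\qquad (S_1\setminus S_2)^\sigma=S_1^\sigma\setminus S_2^\sigma,
\]
valid because $(\_)^\sigma$ is a bijection on isomorphism classes, with these scheme-theoretic facts applied in $M_B(X^\sigma,r)$ for every $\sigma$, and then invoking Proposition \ref{lemAnlg}, yields each of the four assertions of the lemma. I do not anticipate any genuine obstacle: all nontrivial content is already packaged into Proposition \ref{lemAnlg}, and the remainder is formal. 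The only point meriting explicit mention in the write-up is the bijectivity of $(\_)^\sigma$, which is immediate from (\ref{eqDrHg}).
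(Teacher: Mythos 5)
Your proof is correct and is precisely the argument the paper has in mind: the paper states the lemma as an immediate consequence of Proposition \ref{lemAnlg} together with the remark that the diagram (\ref{eqDrHg}) induces bijections on isomorphism classes, and your write-up simply fills in the (routine) verification that $(\_)^\sigma$ commutes with unions, intersections, and complements, and that the corresponding closure properties hold for $K$-closed and $K$-constructible subsets of the Noetherian scheme $M_B(X^\sigma,r)$.
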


Hence one can define a topology depending on $K$ on $M_B(X,r)(\bC)$ whose closed sets are the absolute $K$-closed subsets. 
It is part of the conjecture on absolute sets, see \ref{secSvPc}, that the family of absolute $\bar{\bQ}$-constructible subsets is equal to the subfamily of constructible sets in the sense of this new topology for $K=\bar\bQ$.

As a consequence of Proposition \ref{lemAnlg}, one obtains the following relation between absoluteness in $M$ with absoluteness in $M_B(X)(\bC)$:

\begin{lemma}\label{lemLll2}$\;$

(1) If $S\subset M_B(X)(\bC)$ is absolute $K$-constructible (respectively absolute $K$-closed) in $M_B(X)(\bC)$, then the intersection $S\cap M$ is  absolute $K$-constructible (respectively $K$-closed) in $M$.

(2) Let $M\subset M_B(X)(\bC)$ be  absolute $K$-constructible. If $S\subset M$, then it is an absolute $K$-constructible subset of $M$ iff it is absolute $K$-constructible subset of $M_B(X)(\bC)$.

\end{lemma}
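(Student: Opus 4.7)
The plan is to reduce both statements to the Galois-theoretic characterization of Proposition \ref{lemAnlg}. The key bookkeeping fact is that $\sigma$-translation on subsets of $M_B(X)(\bC)$ respects Boolean operations: the assignment $T\mapsto T^\sigma$ from (\ref{eqSss}) is induced by the bijection $RH\circ p_\sigma\circ RH^{-1}$ on isomorphism classes in $D^b_c$, and, as noted just before Definition \ref{defBWas}, commutes with the tilde operation. Hence after passage through $q_B\circ l^{-1}$, the induced operation on subsets of $M_B(X)(\bC)$ is a bijection that commutes with intersection, yielding the identity $(S\cap M)^\sigma=S^\sigma\cap M^\sigma$.

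For part (1), assume $S\subset M_B(X)(\bC)$ is absolute $K$-constructible (respectively $K$-closed) in $M_B(X)(\bC)$. Applying Proposition \ref{lemAnlg} with the full ambient $M=M_B(X)(\bC)$, for every $\sigma\in Gal(\bC/\bQ)$ the set $S^\sigma$ is $K$-constructible (respectively $K$-closed) in the $K$-scheme $M_B(X^\sigma,r)$. By the displayed identity, $(S\cap M)^\sigma=S^\sigma\cap M^\sigma$, and the intersection of a $K$-constructible (respectively $K$-closed) subset of $M_B(X^\sigma,r)(\bC)$ with $M^\sigma$ is $K$-constructible (respectively $K$-closed) in $M^\sigma$ in the sense of Section \ref{secnot} and the induced topology. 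A second application of Proposition \ref{lemAnlg}, now to $M$, concludes that $S\cap M$ is absolute $K$-constructible (respectively $K$-closed) in $M$.

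For part (2), the ``only if'' direction is (1) specialized to $S\subset M$, for which $S\cap M=S$. For the ``if'' direction, assume $S$ is absolute $K$-constructible in $M$. By Proposition \ref{lemAnlg} applied to $M$, the set $S^\sigma\subset M^\sigma$ is $K$-constructible in $M^\sigma$ for every $\sigma$. Because $M$ is itself assumed absolute $K$-constructible, Proposition \ref{lemAnlg} applied with ambient $M_B(X)(\bC)$ shows that $M^\sigma$ is $K$-constructible in $M_B(X^\sigma,r)(\bC)$; hence a $K$-constructible subset of $M^\sigma$ is $K$-constructible in the full ambient $M_B(X^\sigma,r)(\bC)$. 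A third application of Proposition \ref{lemAnlg}, in the $M_B(X)$-direction, gives that $S$ is absolute $K$-constructible in $M_B(X)(\bC)$.

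The only subtle point I anticipate is the correct interpretation of ``$K$-constructible (resp.\ $K$-closed) in $M^\sigma$'' when $M^\sigma$ is merely a $K$-constructible subset and not a scheme: the argument requires that, once $M$ is absolute $K$-constructible, constructibility relative to $M^\sigma$ is equivalent to constructibility in the ambient $K$-scheme $M_B(X^\sigma,r)$ (restricted to subsets of $M^\sigma$). Once this definitional coherence is in place, the lemma becomes a purely formal consequence of Proposition \ref{lemAnlg} and the compatibility of $(\_)^\sigma$ with intersection.
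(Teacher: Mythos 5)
Your proof is correct and takes the same route the paper intends: the lemma is stated as an immediate consequence of Proposition \ref{lemAnlg}, and you rightly reduce both parts to the Galois-theoretic characterization there together with the identity $(S\cap M)^\sigma = S^\sigma\cap M^\sigma$, which holds because $(\_)^\sigma$ is a bijection on isomorphism classes. Two small remarks: in part (2) you have the labels ``if'' and ``only if'' interchanged (the direction you attribute to part (1) is $B\Rightarrow A$, i.e.\ the ``if'' direction of $A\Leftrightarrow B$); and in part (1) your two applications of Proposition \ref{lemAnlg} require $M$ itself to be absolute $K$-constructible, a hypothesis not written in the statement of part (1) but clearly intended by the paper given that the whole lemma is framed as a consequence of Proposition \ref{lemAnlg}, which carries that hypothesis.
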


We prove Proposition \ref{lemAnlg} after a few lemmas.

\begin{lemma}\label{lemCrT} Let $T\subset LocSys(X,\bC)$, $S=q_B(l^{-1}(T))$, and $\tilde T=l(q_B^{-1}(S))=\tilde S$.

(1) If $T$ is $K$-closed with respect to $LocSys_{free}(X,\_)$ then $T$ contains all the semisimplifications of its elements, and $S$ is $K$-closed in $M_B(X)(\bC)$. 

(2) If $S$ is $K$-constructible (respectively, $K$-closed) in $M_B(X)(\bC)$, then $\tilde T$ is $K$-constructible (respectively, $K$-closed) with respect to $LocSys_{free}(X,\_)$.

(3) If $T$ is $K$-constructible with respect to $LocSys_{free}(X,\_)$ then $S$ is $K$-constructible in $M_B(X)(\bC)$. 

(4) If $T$ is $K$-constructible (respectively, $K$-closed) with respect to $LocSys_{free}(X,\_)$ then $\tilde T$ is also.

\end{lemma}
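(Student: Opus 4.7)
The plan relies on two complementary observations. First, the frame-forgetting map $l:R_B(X,x_0)\to LocSys_{free}(X,\_)$ is $\GL_r$-invariant and is a morphism of unispaces defined over $\bQ$, so for any $T\subset LocSys(X,\bC)$ the preimage $l^{-1}(T)\subset R_B(X,x_0)(\bC)$ is a $\GL_r$-invariant subset inheriting the $K$-constructibility (respectively $K$-closedness) of $T$ with respect to $LocSys_{free}(X,\_)$. Second, any family $\cL_R\in LocSys_{free}(X,R)$ of rank $r$ produces a canonical morphism $f_{\cL_R}:\Spec(R)\to M_B(X,r)$: picking a basis of the finite free $R$-module $\cL_R|_{x_0}$ yields a lift $\Spec(R)\to R_B(X,x_0,r)$ well-defined up to $\GL_r$-conjugation, and the $\GL_r$-invariance of $q_B$ removes the ambiguity; by construction $f_{\cL_R}(m)=[(\cL_R\otimes_RR/m)^{ss}]$.

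For part (1), $l^{-1}(T)$ is $K$-closed and $\GL_r$-invariant in $R_B(X,x_0)$. To see that $T$ contains semisimplifications, given any $\rho$ with $L_\rho\in T$, a 1-parameter subgroup $\lambda:\mathbb{G}_m\to\GL_r$ supplied by the Hilbert--Mumford criterion produces a morphism $\bA^1\to R_B(X,x_0,r)$, $t\mapsto\rho_t$, with $\rho_t\in\GL_r\cdot\rho$ for $t\neq 0$ and $\rho_0=\rho^{ss}$. Pulling back gives a family $\cL_{\bC[t]}\in LocSys_{free}(X,\bC[t])$, and closedness of $T$ forces $\{a\in\bA^1(\bC):L_{\rho_a}\in T\}$ to be Zariski closed; since it contains $\bA^1(\bC)\setminus\{0\}$, it contains $0$ as well, giving $L_{\rho^{ss}}\in T$. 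Finally, $S=q_B(l^{-1}(T))$ is $K$-closed in $M_B(X)$: the affine GIT quotient $q_B$, being defined over $\bQ$, sends closed $\GL_r$-invariant $K$-subschemes of $R_B$ to closed $K$-subschemes of $M_B$, since in characteristic zero invariants commute with base change and the induced morphism on the quotient is a closed embedding. Part (3) is analogous but needs only Chevalley's theorem over $K$: the image under the $\bQ$-morphism $q_B$ of the $K$-constructible $\GL_r$-invariant set $l^{-1}(T)$ is $K$-constructible in $M_B(X)$.

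For part (2), the morphism $f_{\cL_R}$ yields the identity $\{m\in\Spec(R)(\bC):\cL_R\star_RR/m\in\tilde T\}=f_{\cL_R}^{-1}(S)$, and the latter is $K$-constructible (respectively $K$-closed) in $\Spec(R)(\bC)$ because $f_{\cL_R}$ is a morphism of schemes and $S$ is $K$-constructible (respectively $K$-closed) in $M_B(X)$; varying $R$ and $\cL_R$ establishes the claim for $\tilde T$. Part (4) follows by concatenation: combine (3) and (2) in the constructible case, and (1) and (2) in the closed case, passing through $S\subset M_B(X)(\bC)$. The main technical obstacle is the clean construction of $f_{\cL_R}$, which uses the universal property of $q_B$ as a categorical quotient to patch the framing-dependent lifts into a globally well-defined morphism; a secondary concern is checking that $K$-structures descend through the GIT quotient in part (1), which is standard in characteristic zero but deserves care.
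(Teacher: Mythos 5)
Your proof is correct and takes essentially the same route as the paper's: pull back via the frame‑forgetting natural transformation $l$ to $R_B(X,x_0)$, use the fine moduli property to produce the morphism $f_R:\Spec(R)\to R_B(X,x_0,r)$ for part (2), push forward through the GIT quotient $q_B$ for parts (1) and (3), and concatenate for part (4). The only differences are cosmetic fillings-in: for the assertion in (1) that $T$ contains semisimplifications, the paper simply notes that $l^{-1}(T)$ is closed and $\GL_r$‑invariant and hence contains orbit closures (so in particular the closed semisimple orbit in each fiber of $q_B$), whereas you make this explicit with a Hilbert--Mumford degeneration and the unispace definition of closedness applied to the resulting family over $\bA^1$; and in (2) you package the paper's $f_R^{-1}(q_B^{-1}(S))$ as $f_{\cL_R}^{-1}(S)$ after composing with $q_B$, which is the same computation.
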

\begin{proof}
(1) The natural transformation $l$ preserves $K$-closedness after taking inverse image. Thus $l^{-1}(T)$ is closed. It is moreover  invariant under the group action on $R_B(X,x_0)(\bC)$ since it contains all possible frames for each local systems in $T$. Thus the orbit of every point in $l^{-1}(T)$ is contained in $l^{-1}(T)$, and is closed. The claim follows.

(2)   Let $\cL_R$ be a local system on $X$ of rank $r$ free modules over a finite type regular $\bC$-scheme $R$ defined over $K$. We need to show that $$\tilde T_R=\{m\in \Spec (R)(\bC)\mid \cL_R\otimes_RR/m\in \tilde T\}$$
 is $K$-constructible (respectively, $K$-closed) in $\Spec(R)(\bC)$. Choose a frame $\cL_R|_{x_0}\xa{\sim}R^{\oplus r}$. Since $R_B(X,x_0,r)$ is a fine moduli space for framed local systems, there is a morphism 
\be\label{eqf_R}
f_R:\Spec(R)(\bC)\to R_B(X,x_0,r)(\bC)
\ee
 defined over $K$, mapping a maximal ideal $m$ to $\cL_R\otimes_RR/m$ together with the induced frame. By construction, $\tilde T_R=f_R^{-1}(q_B^{-1}(S))$ and hence it is $K$-constructible (respectively, $K$-closed).

(3) The natural transformation $l$ preserves $K$-constructibility after taking inverse image, and thus $S$ is $K$-constructible too.

(4) It follows from (3) (respectively, (1)) together with (2).
\end{proof}

One obtains directly:
 
 \begin{lemma}\label{lemMAA}
Let $S\subset M_B(X,r)(\bC)$ and $\tilde S=l(q_B^{-1}(S))\subset LocSys(X,\bC)$. Then  $\tilde S$ is $K$-constructible (respectively, $K$-closed) with respect to the unispace $LocSys_{free}(X,\_)$ iff $S$ is $K$-constructible (respectively, $K$-closed) in $M_B(X,r)(\bC)$. 
\end{lemma}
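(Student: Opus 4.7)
The plan is to observe that Lemma \ref{lemMAA} is essentially a repackaging of the already-established Lemma \ref{lemCrT}, once one records the saturation identity $q_B(l^{-1}(\tilde S)) = S$.

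First I would verify this saturation identity. By definition, $\tilde S = l(q_B^{-1}(S))$ is the set of isomorphism classes of local systems $L$ on $X$ whose semisimplification lies in $S$ (since the complex points of $M_B(X,r)$ correspond to isomorphism classes of semisimple local systems, and $q_B$ sends a framed local system to the class of its semisimplification). Applying $q_B \circ l^{-1}$ to such an $L$ returns precisely the semisimplification class, so $q_B(l^{-1}(\tilde S)) = S$. This also shows that $\tilde{\tilde S} = \tilde S$, so $\tilde S$ is a saturated set in the relevant sense.

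For the backward direction (both the $K$-constructible and $K$-closed cases), I would apply Lemma \ref{lemCrT}(2) directly with the input $S$: the set denoted $\tilde T$ in the statement of that lemma is by definition $l(q_B^{-1}(S)) = \tilde S$, and the conclusion is precisely that $\tilde S$ is $K$-constructible (respectively, $K$-closed) with respect to $LocSys_{free}(X,\_)$ whenever $S$ is $K$-constructible (respectively, $K$-closed) in $M_B(X)(\bC)$.

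For the forward direction, I would take $T = \tilde S$ in Lemma \ref{lemCrT}. In the constructible case, part (3) of that lemma gives that $q_B(l^{-1}(\tilde S)) = S$ is $K$-constructible in $M_B(X)(\bC)$; intersecting with the $K$-constructible (in fact $K$-closed) subset $M_B(X,r)(\bC) \subset M_B(X)(\bC)$ keeps it $K$-constructible there. In the closed case, part (1) of that lemma applied to $T = \tilde S$ directly gives that $S$ is $K$-closed in $M_B(X)(\bC)$, and again the intersection with $M_B(X,r)(\bC)$ is $K$-closed. The only potential obstacle is the bookkeeping around saturation, but that is handled by the identity $q_B(l^{-1}(\tilde S)) = S$ verified at the outset; no further argument is needed.
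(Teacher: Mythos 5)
Your proof is correct and takes the same route as the paper: the paper introduces Lemma \ref{lemMAA} with the remark ``One obtains directly:'' after Lemma \ref{lemCrT}, meaning it is intended as an immediate consequence of that lemma, which is precisely what you show by recording the saturation identity $q_B(l^{-1}(\tilde S)) = S$ and then invoking parts (1), (2), (3) with $T = \tilde S$.
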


We apply the previous two lemmas to prove a generalization of the last lemma:

\begin{lemma}\label{lemMAA-2}
Let $N\subset LocSys(X,\bC)$ be $K$-constructible with respect to  $LocSys_{free}(X,\_)$. Let $M=q_B(l^{-1}(N))$. Let $S\subset M$ and $\tilde S=l(q_B^{-1}(S))$. Then  $\tilde S$ is $K$-constructible (respectively, $K$-closed) with respect to the unispace $\sM_X$ iff $S$ is $K$-constructible (respectively, $K$-closed) in $M$. 
\end{lemma}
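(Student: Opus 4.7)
The plan is to reduce both directions to Lemma~\ref{lemMAA} by comparing constructibility/closedness with respect to the sub-unispace $\sM_X$ and the ambient unispace $LocSys_{free}(X,\_)$, using a stratification argument to obtain constructibility and a curve-selection argument for closedness. Throughout, I would use that $\sM_X\hookrightarrow LocSys_{free}(X,\_)$ is a natural transformation of unispaces defined over $K$, and that $\tilde M$ is $K$-constructible with respect to $LocSys_{free}$ by Lemma~\ref{lemCrT}(4) applied to $N$; in particular $M=q_B(l^{-1}(N))$ is $K$-constructible in $M_B(X,r)(\bC)$ by Lemma~\ref{lemMAA}.

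For direction $(\Leftarrow)$, I would write $S=M\cap D$ with $D\subset M_B(X,r)(\bC)$ being $K$-constructible (resp.\ $K$-closed), and check that $\tilde S=\tilde M\cap\tilde D$, using that membership in $\tilde M,\tilde D$ depends only on the semisimplification. By Lemma~\ref{lemMAA}, $\tilde D$ is $K$-constructible (resp.\ $K$-closed) with respect to $LocSys_{free}$, so its pullback $\sM_X(\bC)\cap\tilde D=\tilde S$ under the natural transformation $\sM_X\hookrightarrow LocSys_{free}$ is $K$-constructible (resp.\ $K$-closed) with respect to $\sM_X$.

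For direction $(\Rightarrow)$ in the constructible case: given any $\cL_R\in LocSys_{free}(X,R)$, I would form $U=\{m\in\Spec(R)(\bC):\cL_R\star_R R/m\in\tilde M\}$, which is $K$-constructible, and stratify $U=\bigsqcup_i\Spec(R_i)(\bC)$ into regular locally closed affine $K$-subschemes. On each stratum $\cL_R\star_R R_i\in\sM_X(R_i)$, so by hypothesis $\{m\in\Spec(R_i)(\bC):\cL_R\star_R R/m\in\tilde S\}$ is $K$-constructible in $\Spec(R_i)(\bC)$, hence in $\Spec(R)(\bC)$. Their finite union is therefore $K$-constructible, showing $\tilde S$ is $K$-constructible with respect to $LocSys_{free}$. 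By Lemma~\ref{lemMAA}, $S$ is $K$-constructible in $M_B(X,r)(\bC)$, hence in $M$.

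For direction $(\Rightarrow)$ in the closed case, $\tilde S$ is in particular $K$-constructible with respect to $\sM_X$, so $S$ is $K$-constructible in $M_B(X,r)(\bC)$ by the previous step, with $K$-closed Zariski closure $\bar S$; it suffices to show $\bar S\cap M\subset S$. Given $p\in\bar S\cap M$, I would pick $\rho_p\in q_B^{-1}(p)$ with closed $\GL_r$-orbit, and use the universal-categorical-quotient property of $q_B$ by the reductive group $\GL_r$ to place $\rho_p$ in $\overline{q_B^{-1}(S)}$. If $\rho_p$ is isolated in this closure it already lies in the constructible $q_B^{-1}(S)$, so $p\in S$; otherwise I would select a smooth irreducible affine $K$-curve $B$ through $\rho_p$ inside a positive-dimensional irreducible component of $\overline{q_B^{-1}(S)}$, removing finitely many points so that $B\setminus\{\rho_p\}\subset q_B^{-1}(S)$ and $B\subset q_B^{-1}(M)$ (possible since $q_B^{-1}(S)\cup\{\rho_p\}\subset q_B^{-1}(M)$). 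The pullback of the universal framed local system to $B$ then lies in $\sM_X(\cO(B))$, so by hypothesis $\{b\in B(\bC):\cL_B\star_{\cO(B)}\cO(B)/m_b\in\tilde S\}=B(\bC)\cap q_B^{-1}(S)$ is $K$-closed in $B(\bC)$; being open dense in the connected curve $B(\bC)$, it must equal $B(\bC)$, forcing $\rho_p\in q_B^{-1}(S)$ and hence $p\in S$. The main obstacle will be this curve construction, which relies on GIT to place $\rho_p$ in $\overline{q_B^{-1}(S)}$ and on the constructibility of $q_B^{-1}(M)$ to confine the curve to $\sM_X$-families.
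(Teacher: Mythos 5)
Your $\Leftarrow$ direction (from $S$ in $M$ to $\tilde S$ with respect to $\sM_X$) matches the paper's argument: write $S$ as an intersection with a $K$-constructible or $K$-closed set in $M_B(X)(\bC)$, apply Lemma~\ref{lemMAA}, and pull back along the inclusion of unispaces $\sM_X\hookrightarrow LocSys_{free}(X,\_)$ using that $\tilde S\subset\tilde M$ and $\tilde M$ is $K$-constructible.

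The $\Rightarrow$ direction is where you diverge substantially. The paper's route is: since $R_B(X,x_0,r)$ is a fine moduli space, $l$ restricts to a natural transformation $q_B^{-1}(M)(\_)\to\sM_X$ (every $R$-family pulled back from the $K$-constructible subscheme $q_B^{-1}(M)$ automatically has fibers in $\tilde M$), so constructibility or closedness of $\tilde S$ with respect to $\sM_X$ pulls back to $l^{-1}(\tilde S)=q_B^{-1}(S)$ inside $q_B^{-1}(M)$; then one pushes down by $q_B$ using Chevalley for the constructible case and the fact that $q_B$ is a universal categorical quotient (so images of $\GL_r$-invariant closed sets are closed) for the closed case. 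Your stratification argument for the constructible half --- restrict an arbitrary $\cL_R$ to a regular $K$-stratification of the locus where its fibers land in $\tilde M$, apply the hypothesis on each stratum, and reassemble --- is a valid alternative and shows directly that $\tilde S$ is $K$-constructible with respect to $LocSys_{free}(X,\_)$, after which Lemma~\ref{lemMAA} finishes. So the two constructible arguments buy the same thing; yours avoids appealing to the GIT pushforward, at the cost of a stratification.

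Your closed case, however, has a genuine gap. You try to prove $\bar S\cap M\subset S$ by curve selection, but (i) you never justify that a \emph{$K$-defined} smooth affine curve $B$ through the (in general non-$K$-rational) point $\rho_p$, lying inside $\overline{q_B^{-1}(S)}$ and meeting $q_B^{-1}(S)$ densely, actually exists --- this needs a $K$-rational Bertini-type argument and is not automatic; and (ii) the step ``removing finitely many points so that $B\setminus\{\rho_p\}\subset q_B^{-1}(S)$'' cannot be done over $K$: the set $B\setminus q_B^{-1}(S)$ is a $K$-constructible subset of $B$ containing $\rho_p$, so deleting all of it except $\rho_p$ destroys the $K$-structure unless $\rho_p$ happens to be a $K$-point. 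The condition you actually need is weaker --- remove only $B\setminus q_B^{-1}(M)$, which is $K$-closed and avoids $\rho_p$, then invoke the hypothesis and density --- but as written the step is wrong and the $K$-curve existence is unaddressed. The paper sidesteps all of this with the fine-moduli/GIT observation; that is the cleaner and complete argument here.
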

\begin{proof}
By Lemma \ref{lemCrT}, $M$ is $K$-constructible in $M_B(X)(\bC)$ and $\tilde M$ is $K$-constructible with respect to  $LocSys_{free}(X,\_)$.

Suppose that $\tilde S$ is $K$-constructible (respectively, $K$-closed) with respect to $\sM_X$. 
Since $R_B(X,x_0)$ is a fine moduli space, $l$ restricts to a natural transformation $q_B^{-1}(M)(\_)\to \sM_X$. This implies that $l^{-1}(\tilde S)$ is $K$-constructible (respectively, $K$-closed) in $q_B^{-1}(M)$. Thus the image $S=q_B(l^{-1}(\tilde S))$ is $K$-constructible in $M$ (respectively, $K$-closed, since $l^{-1}(\tilde S)=q_B^{-1}(S)$ and $q_B$ is a universal categorical quotient). 

For the converse, note that by Lemma \ref{lemMAA}, $S$ is $K$-constructible  in $M$ iff $\tilde S$ is $K$-constructible with respect to the unispace $LocSys_{free}(X,\_)$.  Since $\tilde S\subset \tilde M$, we have by definition of $\sM_X$ that if $\tilde S$ is $K$-constructible (respectively, $K$-closed) with respect to $LocSys_{free}(X,\_)$ then it is also so with respect to $\sM_X$.

Lastly, if $S$ is $K$-closed  in $M$, write $S=M\cap S'$ where $S'$ is $K$-closed in $M_B(X)(\bC)$. Then by Lemma \ref{lemMAA}, $\tilde S'=l(q_B^{-1}(S'))$ is $K$-closed with respect to  $LocSys_{free}(X,\_)$. Since $\tilde M$ is $K$-constructible with respect to  $LocSys_{free}(X,\_)$, we have that $\tilde S =\tilde M \cap \tilde S'$ is $K$-closed with respect to $\sM_X$, by the definition of $\sM_X$.
\end{proof}

\begin{lemma}\label{lemTil} Let $M\subset M_B(X)(\bC)$ be  absolute $K$-constructible.
A subset $S$ of $M$ is  absolute $K$-constructible (respectively, absolute $K$-closed) in $M$ iff the subset $\tilde{S}=l(q_B^{-1}(S))$  of $\tilde M$ is   absolute $K$-constructible (respectively, absolute $K$-closed)  with respect to the unispace $\sM_X$.
\end{lemma}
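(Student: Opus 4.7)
The plan is to compare Definition \ref{defBWas}, which says $S$ is absolute $K$-constructible/closed in $M$ iff there exists \emph{some} $T\subset\tilde M$ absolute $K$-constructible/closed with respect to $\sM_X$ with $S=q_B(l^{-1}(T))$, with the stronger-looking condition that the canonical choice $T=\tilde S$ itself works.

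For the ``if'' direction, assuming $\tilde S$ is absolute $K$-constructible/closed with respect to $\sM_X$, I would take $T=\tilde S$, use the identity $l^{-1}(\tilde S)=q_B^{-1}(S)$ already observed in the proof of Lemma \ref{lemMAA-2} to get $S=q_B(l^{-1}(\tilde S))$, and conclude via Definition \ref{defBWas}.

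For the ``only if'' direction, fix $\sigma\in Gal(\bC/\bQ)$. Using the identity $(\tilde S)^\sigma=\widetilde{S^\sigma}$, it suffices to show $\widetilde{S^\sigma}$ is $K$-constructible (respectively $K$-closed) with respect to $\sM_{X^\sigma}$, and by Lemma \ref{lemMAA-2} applied at $X^\sigma$ this reduces further to showing $S^\sigma$ itself is $K$-constructible (respectively $K$-closed) in $M^\sigma$. We have $S^\sigma=q_B(l^{-1}(T^\sigma))$, and the restriction of the frame-forgetting natural transformation $l:q_B^{-1}(M^\sigma)(\_)\to\sM_{X^\sigma}$ shows that $l^{-1}(T^\sigma)$ is $K$-constructible/closed in $q_B^{-1}(M^\sigma)$. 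In the constructible case this is all one needs: $l^{-1}(T^\sigma)$ is $K$-constructible in the ambient $R_B(X^\sigma,x_0)$, so by Chevalley's theorem $S^\sigma=q_B(l^{-1}(T^\sigma))$ is $K$-constructible in $M_B(X^\sigma)$, hence in $M^\sigma$.

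The main obstacle is the closed case, because $l^{-1}(T^\sigma)$ is only closed inside $q_B^{-1}(M^\sigma)$, which is itself only $K$-constructible (not closed) in $R_B(X^\sigma,x_0)$, so one cannot directly invoke GIT. To handle this I would pass to the Zariski closure $A:=\overline{l^{-1}(T^\sigma)}\subset R_B(X^\sigma,x_0)$, which is $K$-closed and $\GL_r$-invariant (since $\GL_r$ acts by automorphisms, the closure of a $\GL_r$-invariant subset remains invariant), and which satisfies $l^{-1}(T^\sigma)=A\cap q_B^{-1}(M^\sigma)$ precisely by the closedness of $l^{-1}(T^\sigma)$ in $q_B^{-1}(M^\sigma)$. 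The standard fact that an affine categorical quotient by a reductive group sends closed invariant subsets to closed subsets then yields that $q_B(A)$ is $K$-closed in $M_B(X^\sigma)$, whence a short set-theoretic check gives $S^\sigma=q_B(l^{-1}(T^\sigma))=q_B(A)\cap M^\sigma$, which is $K$-closed in $M^\sigma$. Combining both cases through Lemma \ref{lemMAA-2} completes the proof.
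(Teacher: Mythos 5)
Your proof is correct and follows essentially the same route as the paper: the ``if'' direction is immediate from Definition \ref{defBWas} via $l^{-1}(\tilde S)=q_B^{-1}(S)$, and for the ``only if'' direction both you and the authors transfer the problem from $\tilde S$ to $S$ (using $(\tilde S)^\sigma=\widetilde{S^\sigma}$ together with Lemma \ref{lemMAA-2}, or Lemma \ref{lemCrT}(4), which encodes the same content) and then exploit that $q_B$ is a GIT quotient and Chevalley's theorem. The one place where you go somewhat beyond what the paper writes is the closed case: the paper simply asserts that $S=q_B(l^{-1}(T))$ is $K$-closed in $M$ ``since $q_B$ is a universal categorical quotient,'' while you explicitly pass to the Zariski closure $A=\overline{l^{-1}(T^\sigma)}$, observe that $A$ is a $K$-closed $\GL_r$-invariant subset of the affine scheme $R_B(X^\sigma,x_0)$ so that $q_B(A)$ is $K$-closed, and then recover $S^\sigma=q_B(A)\cap M^\sigma$ set-theoretically. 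This is a cleaner way to justify the same GIT step when $q_B^{-1}(M^\sigma)$ is merely constructible rather than closed in $R_B(X^\sigma,x_0)$, and it buys you a more self-contained argument at the cost of a small amount of extra bookkeeping; conceptually, however, the two proofs coincide.
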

\begin{proof}
The implication $\Leftarrow$ follows by definition. We prove the implication $\Rightarrow$. 

Let $T$ be a subset of $\tilde M$ that is absolute $K$-constructible (respectively, absolute $K$-closed) with respect to the unispace $\sM_X$, and such that $S=q_B(l^{-1}(T))$. Then $T^\sigma$ is $K$-constructible (respectively,  $K$-closed) with respect to the unispace $\sM_X$ for all $\sigma$, by definition. Note that  $S^\sigma=q_B(l^{-1}(T^\sigma))$, and, as remarked before, $\widetilde{S^\sigma}:=l(q_B^{-1}(S^\sigma))$ is precisely $(\tilde S)^\sigma$. It follows that $(\tilde S)^\sigma$ is $K$-constructible  with respect to $LocSys_{free}(X^\sigma,\_)$ by Lemma \ref{lemCrT} (4), and hence also with respect to $\sM_{X^\sigma}$ by the definition of the later. This proves the constructible case.

 It remains to prove the closed case. Assume that $T$ is absolute $K$-closed with respect to $\sM_X$. Since $T$ is $K$-closed with respect to $\sM_X$, $l^{-1}(T)$ is $K$-closed in $q_B^{-1}(M)$ and invariant under the group action since it contains all possible frames for each local systems in $T$.  Thus $S=q_B(l^{-1}(T))$ is $K$-closed in $M$. By Lemma \ref{lemMAA-2} it follows that $\tilde S$ is $K$-closed with respect to $\sM_X$. Applying the same argument to $T^\sigma$, we have that $\tilde S^\sigma$ is $K$-closed with respect to $\sM_{X^\sigma}$. This shows that $\tilde S$ is absolute $K$-closed in $M$ with respect to $\sM_X$.
\end{proof}

\begin{proof}[Proof of Proposition \ref{lemAnlg}] If follows from  Lemma \ref{lemTil} and Lemma \ref{lemMAA-2}, by noting that with  $S$, $\tilde S$ as in Lemma \ref{lemTil}, one has $S^\sigma = q_B(l^{-1}(\tilde S^\sigma))$ and $\tilde S^\sigma=l(q_B^{-1}(S^\sigma))$.
\end{proof}

\subsection{Special Varieties Package Conjecture}\label{secSvPc} With all notions in hand now one can make fully sense of  the statement of Conjecture A, consisting of parts A1-A4, from Introduction. We recall some partial results and pinpoint some difficulties. On the positive side one has:

\begin{theorem}\label{budurwang} (\cite[Theorem 1.3.1, Proposition 10.4.3]{BW})

Conjecture A holds if $r=1$. Moreover, every absolute $\bar{\mathbb Q}$-closed subset of the affine algebraic torus $ M_B(X,1)(\mathbb C)$  is a finite union of torsion-translated affine  algebraic subtori.    
\end{theorem}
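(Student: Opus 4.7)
The plan is to exploit the explicit structure of $M_B(X,1)$ as (the finite union of translates of) an algebraic torus, and reduce the classification of absolute $\bbQ$-closed subsets to a Manin-Mumford type statement. Writing $H_1(X,\bZ)\cong\bZ^n\oplus T$ with $T$ finite, we have $M_B(X,1)(\bC)=\Hom(H_1(X,\bZ),\bC^*)$, a disjoint union of finitely many copies of $(\bC^*)^n$. Under the Riemann-Hilbert correspondence, a character $\chi$ corresponds to an algebraic rank-one logarithmic flat connection $d+\omega_\chi$ on a good compactification of $X$; the Galois conjugate $\chi^\sigma\in M_B(X^\sigma,1)(\bC)$ for $\sigma\in Gal(\bC/\bQ)$ is read off the base-changed algebraic connection on $X^\sigma$. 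Concretely, if we write $\chi(\gamma_j)=\exp(2\pi i\lambda_j)$, then $\chi^\sigma(\gamma_j^\sigma)=\exp(2\pi i\sigma(\lambda_j))$.

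The first step is to characterize the absolute $\bbQ$-points, which is Conjecture A3 in rank one. Torsion characters take values in $\mu_\infty\subset\bbQ^{\times}$, a discrete $Gal(\bC/\bQ)$-stable set, so their Galois orbits are finite and consist entirely of torsion characters; every torsion character is thus an absolute $\bbQ$-point. Conversely, for a non-torsion $\chi$ at least one residue $\lambda_j$ lies outside $\bQ$, and transcendence-theoretic input (Baker's theorem on linear forms in logarithms, or directly the observation that $\exp(2\pi i\sigma(\lambda_j))$ is non-algebraic for suitable $\sigma$) produces a $\sigma$ with $\chi^\sigma$ having a transcendental coordinate, so $\{\chi\}^\sigma$ fails to be $\bbQ$-closed in $M_B(X^\sigma,1)$. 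In rank one, torsion characters are precisely the local systems of geometric origin, since finite-order characters factor through finite \'etale covers of $X$; this yields A3.

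The structural description then follows from Laurent's theorem (Manin-Mumford for algebraic tori): any Zariski-closed subset of an algebraic torus whose torsion points are Zariski dense is a finite union of torsion-translated subtori. Given an absolute $\bbQ$-closed $S\subset M_B(X,1)(\bC)$, the assumption that $S^\sigma$ is $\bbQ$-closed for every $\sigma$ forces the Zariski closure of the torsion locus of $S$ to equal $S$: if this closure were strictly smaller, there would exist a non-torsion $\chi\in S$ not lying in any torsion-translated subtorus contained in $S$, and transporting by a suitable $\sigma$ as above would produce a point $\chi^\sigma\in S^\sigma$ with a transcendental coordinate, violating $\bbQ$-closedness of $S^\sigma$. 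Laurent's theorem then yields the stated structure. From this, A1 follows because intersections, finite unions, complements and irreducible components of torsion-translated subtori remain torsion-translated subtori; A2 is immediate because torsion points are Zariski dense in every torus coset; and A4 follows from the functorial behaviour of tori and their torsion points under the isogenies induced by morphisms of varieties. The main obstacle is the Zariski-density claim for the torsion locus of $S$, which bridges the abstract definition of absoluteness with Laurent's theorem and ultimately rests on transcendence-theoretic control of how Galois acts on non-torsion characters through the exponential map.
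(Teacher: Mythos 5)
Your argument for Conjecture A3 in rank one (the zero-dimensional case) is essentially sound: torsion characters are stable under $\mathrm{Gal}(\bC/\bQ)$ and of geometric origin via the finite cover they factor through, while a non-torsion character has some residue $\lambda_j\notin\bQ$ and a transcendence argument (Gelfond--Schneider when $\lambda_j$ is algebraic irrational, a suitable choice of $\sigma$ when $\lambda_j$ is transcendental) produces a conjugate with a non-algebraic coordinate, so the singleton $\{\chi^\sigma\}$ is not $\bbQ$-closed. Up to some care about the non-uniqueness of residues and the full abelianization (not just the boundary monodromy), that part matches the shape of the [BW] argument.

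The gap is in the step that is supposed to bridge to Laurent's theorem. You assert that producing a point $\chi^\sigma\in S^\sigma$ with a transcendental coordinate ``violates $\bbQ$-closedness of $S^\sigma$.'' This is false for $\dim S>0$: being $\bbQ$-closed only means that $S^\sigma$ is cut out by polynomial equations with $\bbQ$-coefficients, and such a set of positive dimension contains plenty of points with transcendental coordinates --- e.g.\ $V(xy-1)\subset(\bC^*)^2$ is $\bbQ$-closed and contains $(\pi,\pi^{-1})$. You have transported the correct criterion from the zero-dimensional case (a $\bbQ$-closed \emph{point} must have algebraic coordinates) to the positive-dimensional case, where it does not apply. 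As a result, the Zariski-density of torsion points in $S$ --- which is precisely the hypothesis of Laurent's theorem and, after Laurent, is essentially equivalent to the conclusion --- is never actually established.

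The missing ingredient is bi-algebraicity. In the [BW] proof, absoluteness is used to show that the Euclidean closure of each analytic irreducible component of $RH^{-1}(S)$ inside the de Rham moduli is Zariski closed (this is \cite[Proposition 7.4.5]{BW}, quoted as Proposition \ref{propBia} in the present paper). In rank one the $RH$ map is essentially a (generalized) exponential map, and an Ax--Lindemann-type classification of its bi-algebraic subvarieties shows that $S$ is a finite union of translates of subtori; one then runs the Galois/transcendence argument on the translating element in the quotient torus to upgrade to torsion translates. Laurent's theorem is not a substitute for this input: without the bi-algebraicity step there is no route from ``$S^\sigma$ is $\bbQ$-closed for all $\sigma$'' to the density of torsion points.
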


In fact, the proof gives the stronger statement that the subtori involved are motivic, that is, image of the pullback map $M_B(G,1)\ra M_B(X,1)$ associated to some morphism $X\ra G$ to a semi-abelian variety $G$. This was also proved later differently by \cite{EK} for an arithmetic counterpart of absolute sets; see also \cite{L} for an extension to a singular setup.

\begin{theorem} (\cite[Propositions 10.4.4, 10..4.5]{BW}) 

(1) Conjecture A holds in any rank if $X$ is a complex affine torus or an abelian variety. 

(2) A1 holds if $X$ is projective.
\end{theorem}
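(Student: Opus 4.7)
For part (1), the plan is to use that when $X$ is a complex affine torus or an abelian variety, $\pi_1(X)$ is abelian, so every semisimple rank-$r$ complex representation splits as a direct sum of $r$ characters, giving a natural isomorphism of $\bQ$-schemes $M_B(X,r)\cong\mathrm{Sym}^r M_B(X,1)$ from the GIT description of both sides. Absolute $\bbQ$-constructibility and $\bbQ$-closedness descend through this symmetric product because Galois conjugation commutes with fiber products and with the $S_r$-action (which is defined over $\bQ$). Theorem \ref{budurwang} describes the absolute $\bbQ$-closed subsets of $M_B(X,1)$ as finite unions of torsion-translated affine subtori; taking the $S_r$-image in $\mathrm{Sym}^r M_B(X,1)$ of products of such tori gives the analogous description in $M_B(X,r)$, hence A1 follows together with Lemma \ref{lemOps}. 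For A2 and A3, torsion points are Zariski dense in any torsion-translated subtorus and (by the rank one case) correspond to characters of geometric origin; a semisimple rank-$r$ local system is then an absolute $\bbQ$-point iff each of its character summands is torsion iff it is of geometric origin. For A4, one observes that a torsion-translated subtorus of $M_B(X,1)^r$ is, up to a torsion translation, the image of the pullback $M_B(G,1)^r\to M_B(X,1)^r$ for some map $X\to G$ to a semi-abelian variety, and passing to the $S_r$-quotient produces the required $\bbQ$-pseudo-isomorphism.

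For part (2), when $X$ is smooth projective, Simpson's de Rham moduli $M_{DR}(X,r)$ of semistable flat connections is a finite type $\bQ$-scheme, the base change $X\to X^\sigma$ lifts to a $\bQ$-scheme isomorphism $M_{DR}(X,r)\to M_{DR}(X^\sigma,r)$, and the Riemann--Hilbert map $RH:M_{DR}(X,r)^{an}\xrightarrow{\sim} M_B(X,r)^{an}$ is a biholomorphism in the projective case since there is no boundary stratification or irregular data to match. Thus $RH$ intertwines the operation $S\mapsto S^\sigma$ on subsets of $M_B(X,r)(\bC)$ with the $\bC$-point operation induced by the algebraic base change on $M_{DR}$. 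The three claims bundled into A1---stability of the $\bbQ$-constructible collection under Boolean operations, closedness of the closure of a constructible set, and $\bbQ$-closedness of each irreducible component of an $\bbQ$-closed subset---are all statements about finite type $\bQ$-subschemes that commute with base change and with the $\mathrm{Gal}(\bbQ/\bQ)$-permutation of irreducible components. Via the unispace characterization of Proposition \ref{lemAnlg}, these properties transfer to absolute $\bbQ$-closedness and $\bbQ$-constructibility on the Betti side.

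The hardest step in (1) is the A4 clause: producing a genuine $\bbQ$-pseudo-isomorphism onto an absolute $\bbQ$-constructible subset of some $M_B(X',r')$ that restricts to a bijection on $\bbQ$-points, which requires tracking the $S_r$-quotient of products of torsion translates carefully and choosing $X'$ as a suitable semi-abelian target. In (2), the main subtlety is the irreducible-components clause: one must show that Galois permutes the finite set of irreducible components of an absolute $\bbQ$-closed subset compatibly with the isomorphism $M_B(X,r)^{an}\to M_B(X^\sigma,r)^{an}$ factoring through $M_{DR}$, so that each orbit-closure is individually absolute $\bbQ$-closed. This ultimately reduces to the functoriality of the irreducible decomposition of a finite type $\bQ$-scheme under base change.
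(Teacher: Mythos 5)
Your overall strategy mirrors the paper's: reduce to the rank-one description via symmetric products or $\det\times mon$ for abelian fundamental groups, and pass through the de Rham moduli via the analytic Riemann--Hilbert map to exploit the algebraicity of $p_\sigma$. However, there is a genuine gap in part (2) that the cited proof handles and your argument silently steps over.

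The problem is the interface between the analytic and algebraic structures. You observe that $RH:M_{DR}(X,r)^{an}\xrightarrow{\sim}M_B(X,r)^{an}$ is a biholomorphism and that $p_\sigma$ is base change along $\sigma$, a Zariski homeomorphism. From these two facts you conclude that the three clauses of A1 ``commute with base change.'' But for an absolute $\bbQ$-constructible $S\subset M_B(X,r)(\bC)$, the relation $\overline{S}^\sigma=RH\bigl(p_\sigma(RH^{-1}(\overline{S}))\bigr)$ only gives something useful if $RH^{-1}(\overline{S})$, which is merely the \emph{Euclidean} closure of $RH^{-1}(S)$ because $RH$ is only an analytic isomorphism, is actually \emph{Zariski} closed in $M_{DR}$. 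Since $RH$ is not algebraic, there is no general reason for the preimage of a Zariski closed set to be Zariski closed, and $p_\sigma$ is not a homeomorphism in the Euclidean topology, so $p_\sigma$ applied to a merely Euclidean-closed set can be wild. What forces algebraicity on the de Rham side is exactly the bi-algebraicity theorem that the paper records as Proposition \ref{propBia} (from \cite[Proposition 7.4.5]{BW}): for a (moduli-)absolute $K$-constructible $S$, the Euclidean closure of each analytic irreducible component of $RH^{-1}(S)$ is Zariski closed in $M_{DR}$. This is the crucial input --- it is what makes the passage to the irreducible-components clause possible, and it is what the paper itself leans on in Lemma \ref{lemAICsigma}, Lemma \ref{corclosure} and Lemma \ref{lemirredcom} for the more general Theorem \ref{propA1si}. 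Your proposal asserts ``functoriality of the irreducible decomposition of a finite type $\bQ$-scheme under base change'' as if the relevant subsets were already known to be finite-type $\bQ$-subschemes of $M_{DR}$, which is precisely the nontrivial point.

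Part (1) is structurally closer to being correct: the identification $M_B(X,r)\cong\mathrm{Sym}^r M_B(X,1)$ for abelian $\pi_1$, the $S_r$-equivariant reduction to the rank-one theorem, and the characterization of absolute $\bar\bQ$-closed subsets as finite unions of torsion-translated subtori all go through. But the A4 clause is still under-argued: you need the $\bbQ$-pseudo-isomorphism to restrict to a bijection on absolute $\bbQ$-points after passing to the $S_r$-quotient, and you need to verify that the target $M_B(X',r')$ you produce (a power of a semi-abelian variety, with $r'=1$) really carries the claimed absolute structure; these steps deserve more than the one sentence they get. As a lower-level comparison, the paper's analogues (Theorem \ref{thmChR} A4) show that the quasi-finiteness of a determinant-and-eigenvalue map together with the rank-one theorem needs to be tracked explicitly --- that is the kind of bookkeeping missing from your sketch.
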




An essential difficulty with A1 is that the following concrete question is open in general, although we expect a positive answer:

\begin{que}\label{queIrr} Let $X$ be a smooth non-proper complex algebraic variety and $\sigma\in Gal(\bC/\bQ)$.  If $M_B(X,r)$ is irreducible, is $M_B(X^\sigma,r)$ also irreducible? If $M_B(X,r)$ is not irreducible, and $T$ is an irreducible component of $M_B(X,r)$, is $T^\sigma$ at least constructible? Are the irreducible components in bijection with those of $M_B(X^\sigma,r)$? 
\end{que}

Note that one can formulate by analogy the following:

\begin{conj}[Conjecture A for $M$] \label{conj1BM}   Let $M$ be an absolute $\bbQ$-constructible subset of $M_B(X,r)(\bC)$. Then the conditions A1-A4 hold in $M$.    
\end{conj}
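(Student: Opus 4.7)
The plan is to leverage the unispace-free characterization given in Proposition \ref{lemAnlg}, which reduces absoluteness in $M$ to the statement that $S^\sigma$ is $K$-constructible in $M^\sigma$ for every $\sigma \in \mathrm{Gal}(\bC/\bQ)$. With this in hand, Conjecture A for $M$ becomes a Galois-equivariant statement inside the ordinary schematic Betti moduli, and I would attack A1--A4 in turn. The organizing principle is that statements which are purely set-theoretic/categorical (unions, intersections, pullbacks) should follow formally from the unispace calculus, while statements about geometric origin (A3) and irreducible decomposition (A1) are genuinely hard and rest on the structural conjectures the paper keeps flagging.

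For A1, the three Boolean closure properties are immediate from Lemma \ref{lemOps} applied inside $M$; note that since $M$ is absolute $\bbQ$-constructible, its Galois conjugates $M^\sigma$ are $\bbQ$-constructible in $M_B(X^\sigma,r)$, so restrictions of Boolean operations stay within the right world. The equivalence of Euclidean and Zariski closure for $\bbQ$-constructible sets transports through conjugation since each $M^\sigma$ lives inside the finite-type scheme $M_B(X^\sigma,r)$. The serious obstruction is showing that irreducible components of an absolute $\bbQ$-closed subset remain absolute $\bbQ$-closed: this is precisely Question \ref{queIrr}, asking whether $\sigma$ permutes irreducible components compatibly. I would try to reduce this to the projective case (where A1 is known by \cite{BW}) by compactifying $X$ to a good compactification and analyzing the induced map on moduli, or alternatively by exploiting the $RH$ correspondence and the fact that $\sigma$ acts algebraically on de Rham moduli to transport irreducible components.

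For A2--A4, the natural strategy is to reduce to sub-moduli where Simpson-type rigidity results are available, mirroring the proof of Theorem \ref{thmChR}. Concretely, one would first show that any absolute $\bbQ$-constructible $M$ admits a canonical stratification by rigidity type (e.g.\ the cohomologically rigid locus, isomonodromic orbits under the mapping class group, etc.), then prove A2--A4 stratum by stratum by appealing to Theorem \ref{thmChR} and its arguments. Geometric origin behaves well under Galois conjugation (a motivic local system has motivic conjugates), so the implication geometric origin $\Rightarrow$ absolute $\bbQ$-point in A3 is formal; the reverse and the density statement A2 are where Simpson's conjecture is needed. For A4, one would use the quasi-finite description of rigid absolute sets in terms of torsion-translated subtori of $(\bC^*)^N$ (as in the proof of Theorem \ref{thmChR}) to produce the required $\bbQ$-pseudo-isomorphisms with absolute subsets of $M_B(X',r')$ for auxiliary $X',r'$.

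The hard part, and the reason this is only conjectural, is twofold. First, A3 for non-rigid $M$ requires Simpson's conjecture far beyond the cohomologically rigid case; worse, the Landesman--Litt footnote flagged after A3 shows that A2 and A3 cannot both hold for $M = M_B(X,r)$ on certain punctured curves of positive genus, so the conjecture as stated can only survive once one restricts to $M$ that avoids the pathological non-rigid locus. Second, the surjectivity of the analytic $RH$ map from de Rham to Betti moduli is not known in general, obstructing the transport of absolute structures between the two sides used in the strategy of Theorem \ref{propA1si}. A realistic intermediate goal would therefore be to prove the conjecture for all $M$ contained in $M_B^{cohrig}(X,r)$ or, more ambitiously, for $M$ arising as the Zariski closure of any family of cohomologically rigid local systems; the present paper already achieves this, and extending it further is the natural next step.
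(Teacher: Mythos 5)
This statement is stated in the paper as a \emph{conjecture} (Conjecture \ref{conj1BM}), not a theorem: the paper offers no proof of it, and indeed the entire article is devoted to establishing special cases (for $M=M_B^s(X,r)$ and $M=M_B^{cohrig}(X,r)$, Theorems \ref{propA1si} and \ref{thmChR}). So there is no paper proof to compare against, and what you have written is a strategy sketch rather than a proof — which you explicitly and correctly acknowledge (``the reason this is only conjectural\ldots''). Your sketch is accurate as far as it goes: the reduction to the unispace-free characterization of Proposition \ref{lemAnlg} is how the paper operates, the Boolean-closure parts of A1 do reduce to Lemma \ref{lemOps} together with Lemma \ref{lemLll2}, the irreducible-component part of A1 is genuinely blocked by Question \ref{queIrr}, A3 in general is blocked by Simpson's conjecture, and the Landesman--Litt footnote shows that A2 and A3 cannot simultaneously hold for the full $M_B(X,r)$ on certain punctured curves of positive genus, so any resolution must restrict $M$. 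You also correctly note that non-surjectivity of $RH$ in general obstructs the transport of absoluteness from the de Rham side.

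One small caution: your proposed ``canonical stratification by rigidity type'' for an arbitrary absolute $\bbQ$-constructible $M$ is not something the paper provides or even suggests is available — for general $M$ such a stratification (with each stratum again absolute $\bbQ$-constructible and amenable to the $\det\times mon$ quasi-finiteness argument) is part of what would need to be constructed, and there is no evidence it exists beyond the cohomologically rigid locus. The paper instead observes (in the unnumbered Lemma just after the conjecture) only the downward heredity: Conjecture A for $M$ implies it for any absolute $\bbQ$-constructible $M'\subset M$, so proving it for larger $M$ is the hard direction. Your realistic intermediate goal (prove it for all $M\subset M_B^{cohrig}(X,r)$) is exactly what this heredity lemma plus Theorem \ref{thmChR} already delivers, so as you say, the genuinely new step would have to go beyond the rigid locus, and the Landesman--Litt obstruction shows this cannot be done naively for the full moduli.
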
     
      
This is not just an analogy. If A1 holds in $M_B(X)(\bC)$, it would imply that the absolute $\bbQ$-closed subsets of $M$ are also restrictions to $M$ of absolute $\bbQ$-closed subsets of $M_B(X)(\bC)$. Thus Conjecture A for $M_B(X,r)$ implies it for any absolute $\bbQ$-constructible subset $M$, and more generally one has the following whose proof is straight-forward:

\begin{lemma}
Conjecture A for $M$ implies Conjecture A for $M'$, for any two absolute $\bbQ$-constructible subsets $M'\subset M$ of $M_B(X,r)$.
\end{lemma}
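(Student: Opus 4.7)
The plan is to reduce each part of Conjecture A for $M'$ to the corresponding part for $M$, using the key ``localization'' principle that the absolute $\bbQ$-constructible (respectively absolute $\bbQ$-closed) subsets of $M'$ are precisely the intersections with $M'$ of absolute $\bbQ$-constructible (respectively absolute $\bbQ$-closed) subsets of $M$.

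First I would prove this localization principle. For constructibility, both directions follow from two applications of Lemma \ref{lemLll2}(2) relating constructibility in $M'$, in $M$, and in $M_B(X,r)$, together with the fact that intersections of two absolute $\bbQ$-constructible subsets are again such (Lemma \ref{lemOps}). For closedness, if $F_0$ is absolute $\bbQ$-closed in $M$ then Proposition \ref{lemAnlg} combined with transitivity of subspace topology yields that $F_0\cap M'$ is absolute $\bbQ$-closed in $M'$. Conversely, given $S$ absolute $\bbQ$-closed in $M'$, one applies A1 for $M$ to $S$ viewed as an absolute $\bbQ$-constructible subset of $M$: then $\overline{S}^{M}$ is absolute $\bbQ$-closed in $M$, and $\overline{S}^{M}\cap M'=\overline{S}^{M'}=S$, since closure in the subspace topology equals the ambient closure intersected with the subspace, and $S$ is already closed in $M'$.

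With this in hand, A1 for $M'$ follows from A1 for $M$ by formal manipulations. A Boolean representation $S=f(F_1,\dots,F_k)$ inside $M$ with $S\subseteq M'$ intersects to $S=f(F_1\cap M',\dots,F_k\cap M')$; the Zariski closure in $M'$ of an absolute $\bbQ$-constructible $S$ is $\overline{S}^{M}\cap M'$, absolute $\bbQ$-closed in $M'$ by localization; and the irreducible components of an absolute $\bbQ$-closed $F=F_0\cap M'\subseteq M'$ (where $F_0=\overline{F}^{M}$) are $Z_i\cap M'$, with $Z_1,\dots,Z_k$ the irreducible components of $F_0$, each absolute $\bbQ$-closed in $M$ by A1 for $M$. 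A2 and A3 for $M'$ are immediate: a non-empty absolute $\bbQ$-constructible $S\subseteq M'$ is also absolute $\bbQ$-constructible in $M$, so A2 for $M$ supplies a Zariski dense subset of absolute $\bbQ$-points in $S$ (a property intrinsic to $S$ regardless of ambient); and A3 concerns individual points, where both ``absolute $\bbQ$-point'' and ``of geometric origin'' are intrinsic properties of points in $M_B(X,r)$.

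For A4 for $M'$: given a set $P$ of absolute $\bbQ$-points in $M'$, its Zariski closure in $M'$ equals $\overline{P}^{M}\cap M'$; A4 for $M$ provides a $\bbQ$-pseudo-isomorphism from $\overline{P}^{M}$ to an absolute $\bbQ$-constructible subset of some $M_B(X',r')$ with a bijection on absolute $\bbQ$-points, and since $M'$ is absolute $\bbQ$-constructible the $\bbQ$-pseudo-isomorphism restricts to $\overline{P}^{M}\cap M'$, yielding the required $\bbQ$-pseudo-isomorphism for $\overline{P}^{M'}$ that preserves the bijection on absolute $\bbQ$-points. The chief obstacle is the closed-set direction of the localization principle, which cannot be proved without invoking A1 for $M$ (specifically the Zariski-closure part); every other reduction is either formal or uses only the definitions from Section \ref{subAsl}.
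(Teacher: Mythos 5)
Your argument follows exactly the route the paper sketches in the paragraph immediately preceding the lemma: the key observation is the localization principle, namely that absolute $\bbQ$-closed subsets of $M'$ are precisely the traces $F_0\cap M'$ with $F_0$ absolute $\bbQ$-closed in $M$, with the nontrivial inclusion supplied by A1 for $M$ via the Zariski-closure step. The paper then declares the rest ``straight-forward'' and gives no further proof; your reductions of A1, A2, A3 fill this in correctly (in particular your identification of the irreducible components of $F=F_0\cap M'$ with the $Z_i\cap M'$, where $Z_i$ are the components of $F_0=\overline{F}^M$, is right: each $Z_i\cap M'$ is dense in the irreducible $Z_i$, hence irreducible, and no containment among them can occur).

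The one step to look at more carefully is A4. You observe that the $\bbQ$-pseudo-isomorphism $\phi\colon\overline{P}^M\to N\subset M_B(X',r')$ furnished by A4 for $M$ restricts to $\overline{P}^M\cap M'$ because $M'$ is $\bbQ$-constructible; that much is fine, and the restriction is a $\bbQ$-pseudo-isomorphism onto its image. But A4 for $M'$ requires the target of the restricted map to be \emph{absolute} $\bbQ$-constructible in $M_B(X',r')$, and this does not follow merely from the hypothesis that $M'$ is absolute $\bbQ$-constructible in $M_B(X,r)$: the image $\phi(\overline{P}^M\cap M')$ lives in a possibly different moduli, and a bare $\bbQ$-pseudo-isomorphism is not assumed to intertwine the conjugation operations $(\cdot)^\sigma$ on source and target. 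One would need either extra structure on $\phi$ (beyond the bijection on absolute $\bbQ$-points promised by the ``moreover'' clause) or a different packaging of the statement. Since the paper does not spell out a proof, this may be a loose end in the ``straight-forward'' claim itself rather than a defect unique to your reconstruction, but as written your A4 reduction is not complete.
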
           

In this article we will (partly) address Conjecture A for $M$, when $M$ is $M_B^s(X,r)(\bC)$ and $M_B^{cohrig}(X,r)(\bC)$.

\section{Absoluteness via de Rham moduli spaces}  

The absolute sets we are interested in live on  Betti moduli spaces. The notion of absoluteness is however defined via the categorical Riemann-Hilbert correspondence. Using de Rham moduli spaces one has in certain situations a more concrete characterization of absoluteness, according to \cite{BW}, which we recall, correct, and expand in detail in \ref{defderham}-\ref{subAbs}. The concise list of corrections to \cite{BW} is placed as an appendix to this section in \ref{secApp}. In \ref{subslli} we use this characterization, together with the  local isomorphism property of the analytic Riemann-Hilbert map from a good de Rham moduli subspace onto the Betti submoduli of simple local systems, to prove Theorem \ref{propA1si}. In \ref{subMinf} we show that the map taking determinant and eigenvalues at infinity preserves absoluteness in a certain sense. This will be used in Section \ref{secRls} on rigid local systems.

\subsection{De Rham moduli spaces}\label{defderham} We recall the construction and the basic facts on de Rham moduli spaces. Let $X$ be a smooth complex quasi-projective variety. Fix a point $x_0$ in $X$,   an embedding of $X$ into a smooth complex projective variety $\bar{X}$ such that the complement $D=\bar{X}\setminus X$ is a divisor with normal crossing singularities, and a very  ample line bundle on $\bar{X}$. Let $k$ be a common field of definition of $X, \bar{X}, D, x_0$.

		The {\it de Rham representation space}   is a disjoint union $k$-schemes of finite type  $$R_{DR}(\bar{X}/D, x_0, r)=\coprod_P R_{DR}(\bar{X}/D, x_0, P),$$ 
each of which is a fine moduli space for framed semistable logarithmic connections of rank $r$ and Hilbert polynomial $P$. We explicit now what this means. 

All the definitions below should be relative to $k$, although we largely  suppress this from notation for simplicity, e.g. $X$ is now a $k$-variety.

A {\it logarithmic connection}  is a flat connection $\nabla:E\ra E\otimes_{\cO_{\bar{X}}}\Omega^1_{\bar{X}}(\log D)$ with logarithmic poles along $D$ on a $\mathcal O_{\bar{X}}$-coherent torsion-free module $ E$. This places a structure of $\Lambda$-module on $E$, where $\Lambda=\sD_{\bar{X}}(\log D)$ is the sheaf of logarithmic differential operators. The restriction $E|_X$ is a locally free $\cO _X$-module since it is an $\cO _X$-coherent $\sD_X$-module. Hence $(E,\nabla)|_X$ is a vector bundle with a flat connection on $X$.

A logarithmic connection $(E,\nabla)$ is {\it semistable} (respectively, {\it stable}) if for any proper $\cO_{\bar{X}}$-coherent $\Lambda$-submodule $F$ of $E$, $p(F,m)/rk(F)\le p(E,m)/rk(E)$ (respectively, $<$) for  all integers $m\gg 0$, where $p(F,t)$ is the Hilbert polynomial of $F$ with respect to the fixed polarization on $\bar{X}$, and $rk(F)$ is the rank.

A {\it frame} at $x_0$ of $E$ is an isomorphism $\xi \colon  E|_{x_0}\xa{\sim} \mathcal O_{\bar{X}}^{\oplus r}|_{x_0}$. 
		 
There are straight-forward notions of a family $(E_T,\nabla_T,\xi_T)$ over a  $k$-scheme $T$ of framed semistable logarithmic connections on $(\bar{X},D)$ with fixed Hilbert polynomial, and of equivalence and pullbacks of families, see  \cite[p.92]{S1}, \cite[\S 2]{N}. In this case, $(E_T,\nabla_T)$ is in particular a logarithmic connection on $(\bar{X}\times_k T, D\times_k T)$ relative to $T$. Then $R_{DR}(\bar{X}/D, x_0, P)$ represents the functor associating to $T$ the equivalence class of families over $T$ of framed semistable logarithmic connections with Hilbert polynomial $P$, see  \cite[Theorem 4.10]{S1}.

	There is an action of $\text{GL}_r(\mathbb C)$ on $R_{DR}(\bar{X}/D, x_0, P)$  given by the change of basis in the frame and all the points are GIT-semistable with respect to this action.
	The {\it de Rham moduli space} $M_{DR}(\bar{X}/D, P)$ is 
	\[ M_{DR}(\bar{X}/D, P)=R_{DR}(\bar{X}/D, x_0, P)\sslash \text{GL}_r(\mathbb C).\]
	The quotient morphism, denoted
	\[ q_{DR}\colon R_{DR}(\bar{X}/D, x_0, P)\to M_{DR}(\bar{X}/D, P),\]
	is a morphism of $k$-schemes.
	The scheme $M_{DR}(\bar{X}/D, P)$ is a universal categorical quotient,  whose closed points are the
	Jordan equivalence classes of semistable logarithmic connections with Hilbert polynomial $P$, see  \cite[Theorems 4.7, 4.10]{S1}, \cite[Theorem 3.5]{N}. (The condition $LF(\xi)$ from \cite[Theorem 4.10]{S1}  is automatically satisfied in our case.)
The morphism $q_{DR}$ is surjective, and for a  point $( E, \nabla, \xi)\in R_{DR}(\bar{X}/D, x_0, P)(\mathbb C)$, the image
	$ q_{DR}( E, \nabla, \xi)\in M_{DR}(\bar{X}/D, P)(\mathbb C)$
	is the isomorphism class of the logarithmic connection given by the direct sum of the (stable)
	graded quotients with respect to a Jordan-H\"older series of $(E, \nabla)$.
	
	We denote by 
	$$
	M_{DR}(\bar{X}/D,  r)=\coprod_P M_{DR}(\bar{X}/D, P)
	$$
 the disjoint union over Hilbert polynomials $P$ of rank $r$ logarithmic connections (that is, the highest-degree coefficient of $P$ is $r \deg_H(\bar X)/n!$ where $n=\dim X$ and $H$ is the fixed polarization on $\bar X$), and by 
 $$
 q_{DR}\colon R_{DR}(\bar{X}/D, x_0, r)\to M_{DR}(\bar{X}/D, r)
 $$
the associated morphism. This restricts to a morphism
$$
q_{DR}\colon R^{st}_{DR}(\bar{X}/D, x_0, r)\to M^{st}_{DR}(\bar{X}/D, r)
$$
of the open subschemes of $R_{DR}(\bar{X}/D, x_0, r)$ and $M_{DR}(\bar{X}/D, r)$, respectively, whose closed points correspond to stable logarithmic connections, since  $R^{st}_{DR}(\bar{X}/D, x_0, r)$ is also the GIT-stable locus, \cite[Theorem 4.10]{S1}.

\begin{rmk}\label{remHP} $\;$

(i) Recall that to each logarithmic connection $(E,\nabla)$ on $(\bar{X},D)$ one can attach residue maps $$\Gamma_i\in \Hom_{\cO_{D_i}} (E|_{D_i},E|_{D_i})$$
where $D=\cup_{i=1}^sD_i$ is the irreducible decomposition. Moreover, the characteristic polynomial of $\Gamma_i$ is constant along $D_i$. After taking  exponentials of its roots, the resulting polynomial equals  the characteristic polynomial of monodromy around $D_i$ of the local system on $X$ induced by $(E,\nabla)$.

(ii) The Hilbert polynomial of a locally free logarithmic connection $(E,\nabla)$  can be computed in terms of the residue maps $\Gamma_i$. Namely, using Hirzebruch-Riemann-Roch and the formula for the Chern classes of $(E,\nabla)$ from \cite{O}, \cite[(B.3)]{EV}, one has
$$
p(E,t) =
\mathop{\sum_{\al\in \bN^s}}  \frac{(-1)^{|\al|}}{\al_1!\cdot\ldots\cdot\al_s!}\;Tr(\Gamma_1^{\al_1}\circ\ldots\circ\Gamma_s^{\al_s})\int_{\bar{X}}[D_1]^{\al_1}\ldots[D_s]^{\al_s}ch(H)^t td(\bar{X})
$$
for integers $t\ge 0$, where $|\al|=\al_1+\ldots+\al_s$, $Tr$ denotes the trace the composition of residues maps defined on the intersection of all $D_i$ such that $\al_i\neq 0$, $ch(H)=\exp([H])$ is the Chern character of the fixed ample line bundle $H$ on $\bar{X}$, and $td(\bar{X})$ is the Todd class. 

(iii) If $\dim X=1$ every logarithmic connection  is locally free. If the rank $r$ is fixed, fixing the Hilbert polynomial $P=p(E,t)$ is equivalent to fixing the degree of $E$, by the classical Riemann-Roch. The formula from (ii) becomes
$$
p(E,t) = r \deg H\cdot t + r(1-g) -\sum_{i=1}^s Tr(\Gamma_i)
$$
where $g$ denotes the genus of $\bar{X}$. This equality is equivalent to
$
\deg E + \sum_{i=1}^s Tr(\Gamma_i)=0.
$
 \end{rmk}

\subsection{Riemann-Hilbert morphisms}\label{RH}  We recall the construction and some facts about the analytic Riemann-Hilbert maps. This will be useful to give a more concrete criterion for absoluteness.

The Riemann-Hilbert correspondence induces two analytic morphisms between complex analytic spaces, called {\it Riemann-Hilbert morphisms},  forming a commutative diagram
\begin{equation}\label{eqDid}
\begin{gathered}
\xymatrix{
   	R_{DR}(\bar{X}/D, x_0, r)^{an}    \ar[d]_{q_{DR}}  \ar[rr]^{RH} &  & R_{B}(X, x_0, r)^{an}\ar[d]_{q_B}\\
   	 M_{DR}(\bar{X}/D,r)^{an}\ar[rr]^{RH} & & M_B(X, r)^{an}.
}
\end{gathered}
\end{equation}

The construction of $RH$ is due to \cite[7.1, 7.8]{S2} for $X=\bar{X}$. The construction extends to cover the general case as well. In fact $RH$ has been defined more generally from the moduli of regular holonomic $\sD_{\bar{X}}$-modules with singularities along $D$ to a moduli of perverse sheaves with singularities along $D$ in \cite{NS, N2}. We explicit now the construction in our case.

The analytification  $R_{B}(X, x_0, r)^{an}$ is a fine analytic moduli space, representing the  functor  associating to a complex analytic space $T$ the equivalence classes of families over $T$ of framed local systems on $X$, where the latter notions are defined in manner similar to the case of a scheme $T$.  This functor is now isomorphic to the functor associating to $T$ the set of isomorphism classes of pairs $(\cL,\xi)$ where $\cL$ is a  locally free sheaf of $p_2^{-1}\cO_T$-modules of rank $r$ on $X^{an}\times T$, together with an isomorphism $\cL|_{x_0\times T}\xa{\sim}\cO_T^{\oplus r}$, see \cite[Lemma 7.3]{S2} which holds in the quasi-projective case as well.

The analytification  $R_{DR}(\bar{X}/D, x_0, P)^{an}$ is also a fine analytic moduli space, representing the functor associating to a complex analytic space $T$ the equivalence classes of families $(E_T,\nabla_T,\xi_T)$ over $T$ of framed holomorphic semistable logarithmic connections with Hilbert polynomial $P$, where the latter notions are defined in manner similar to the case of a scheme $T$, see \cite[Lemma 5.7]{S1}.

An analytic family $(E_T,\nabla_T,\xi_T)$  of framed holomorphic semistable logarithmic connections gives rise to the analytic family $(\ker (\nabla_T|_{X^{an}\times T}), \xi_T|_{X^{an}\times T})$ of framed local systems. This association, together with the   properties of being fine analytic moduli, gives an analytic morphism $RH:R_{DR}(\bar{X}/D, x_0, r)^{an}\ra R_{B}(X, x_0, r)^{an}$. Since $RH$ is $\GL_r(\bC)$-equivariant and since $M_{DR}(\bar{X}/D,r)^{an}$ and $M_B(X, r)^{an}$ are universal categorical quotients in the category of analytic spaces as well by \cite[Proposition 5.5]{S1}, there is a unique well-defined analytic morphism $RH:M_{DR}(\bar{X}/D,r)^{an}\ra M_B(X, r)^{an}$ making the  diagram (\ref{eqDid}) commute. Pointwise, this last morphism sends the class of a direct sum of stable logarithmic connections on $(\bar{X},D)$ obtained as successive quotients of a Jordan-H\"older series for a semistable logarithmic connection, to the class of the direct sum of the semi-simplifications of the associated local systems on $X$.

Associated to (\ref{eqDid}) is a commutative diagram of  analytic morphisms between the reduced analytic moduli
\begin{equation}\label{eqDidr}
\begin{gathered}
\xymatrix{
   	R_{DR}(\bar{X}/D, x_0, r)(\bC)    \ar@{->>}[d]_{q_{DR}}  \ar[rr]^{RH}  & & R_{B}(X, x_0, r)(\bC)\ar@{->>}[d]_{q_B}\\
   	 M_{DR}(\bar{X}/D,r)(\bC)\ar[rr]^{RH} & & M_B(X, r)(\bC).
}
\end{gathered}
\end{equation} 

The maps $q_{DR}$ and $q_B$ are surjective as we have already seen. The question of surjectivity of the maps $RH$ can be considered as a higher-dimensional version of Hilbert's 21st problem. In this direction, one has the following:

\begin{thm}\label{thmH21}$\;$ 


(1)  The two maps $RH$ from (\ref{eqDidr}) are surjective if:

 $\quad$ (i)  (cf. Lemma \ref{lemDEs}) the rank $r$ is $1$ or, 
 
 $\quad$ (ii) (\cite{S2}) $X$ is projective.
 
(2) (\cite{EVa}) If $\dim X=1$ the  map 
$$RH: M_{DR}(\bar{X}/D, P)(\bC) \to M_{B}(X, r)(\bC)$$ is surjective, where 
$
P(t)=r ((\deg H) t + 1-g)
$
is the Hilbert polynomial of the trivial rank $r$ vector bundle on $\bar{X}$, $H$ is the fixed ample line bundle on $\bar X$, and $g$ is the genus of $\bar X$.

\end{thm}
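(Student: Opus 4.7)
The plan is to treat the three parts separately, each reducing to a classical construction or known result. A useful preliminary observation, common to all three parts, is this: surjectivity of the top $RH$ in (\ref{eqDidr}) on complex points automatically implies surjectivity of the bottom $RH$, since given $[L] \in M_B(X,r)(\bC)$ one lifts to a framed $\tilde L \in R_B(X,x_0,r)(\bC)$ via the surjective $q_B$, lifts further to $\tilde E \in R_{DR}(\bar X/D,x_0,r)(\bC)$ via the top $RH$, and then pushes down via $q_{DR}$. So the real task in each case is constructing a logarithmic connection realizing a given framed local system, with control on the Hilbert polynomial in part (2).

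For part (1)(i), the rank $1$ case, I would appeal to the forthcoming Lemma \ref{lemDEs}. Explicitly, given a character $\chi\colon \pi_1(X)\to \bC^*$, pick any $\alpha_i\in\bC$ with $\exp(-2\pi i\alpha_i)$ equal to the monodromy of $\chi$ around each irreducible component $D_i$ of $D$, and form the logarithmic connection $\nabla = d + \sum_i \alpha_i\, d\log f_i$ on $\cO_{\bar X}$, where $f_i$ is a local defining equation of $D_i$. This is Deligne's canonical rank $1$ extension, and its kernel on $X^{an}$ is $L_\chi$. Choosing any frame at $x_0$ produces a point of $R_{DR}$ mapping to the prescribed framed character. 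For part (1)(ii), when $X$ is projective and $D=\emptyset$, Simpson's non-abelian Hodge theory \cite{S2} provides an analytic isomorphism $M_{DR}(X,r)^{an}\xrightarrow{\sim} M_B(X,r)^{an}$, in particular a bijection on complex points, and the corresponding statement for framed representation spaces.

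For part (2), where $\dim X = 1$, I would use Deligne's canonical extension directly. Given a representation $\rho$, let $(E_\rho,\nabla_\rho)$ be its canonical extension to $\bar X$, whose residue eigenvalues at each $p_i\in D$ lie in $[0,1)$. Integer shifts of the residue eigenvalues at $p_i$ correspond to twisting $E_\rho$ by line bundles $\cO_{\bar X}(n\cdot p_i)$; such shifts do not change the associated local system on $X$ but do change the degree of the underlying vector bundle, and hence the Hilbert polynomial. By Remark \ref{remHP}(iii), the target Hilbert polynomial $P(t) = r((\deg H)t + 1 - g)$ corresponds exactly to the condition $\sum_i \tr(\Gamma_i) = 0$. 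Now $\sum_i \tr(\Gamma_i)$ is always an integer a priori, because $\exp\!\bigl(-2\pi i \sum_i \tr(\Gamma_i)\bigr) = \prod_i \det M_i = 1$, using that in the fundamental group of a punctured curve the product of the small loops around the punctures differs from the identity by a product of commutators, which have determinant one. Consequently integer shifts can be chosen so that $\sum_i \tr(\Gamma_i) = 0$ exactly, producing a framed point of $R_{DR}(\bar X/D, x_0, P)(\bC)$ realizing $\rho$.

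The main obstacle I anticipate is the bookkeeping in part (2): checking that the integer shifts can be selected simultaneously to hit $\sum_i \tr(\Gamma_i) = 0$ on the nose, and matching this with the Hilbert polynomial formula of Remark \ref{remHP}(iii) in a way that is uniform as $\rho$ varies. Once this integrality and target-matching step is settled, the diagram chase described at the outset upgrades the top-row construction to surjectivity of the bottom-row $RH$ in each of the three parts.
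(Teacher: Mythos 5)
The paper gives no proof of this theorem; it is a compilation of known results, deferring (1)(i) to Deligne extensions via Lemma \ref{lemDEs}, (1)(ii) to Simpson, and (2) to the Esnault--Viehweg result \cite{EVa}. Your preliminary reduction (top $RH$ surjective $\Rightarrow$ bottom $RH$ surjective) is correct, and your arguments for (1)(i) and (1)(ii) are essentially right, modulo a small imprecision in (1)(i): $\sum_i \alpha_i\,d\log f_i$ is only a local formula, and the Deligne canonical extension lives on a (generally nontrivial) line bundle, not $\cO_{\bar X}$; for rank one this is harmless since every logarithmic connection on a line bundle is automatically stable.

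The genuine gap is in part (2): you never address \emph{semistability}, which is the entire content of the cited Esnault--Viehweg result. You correctly identify, via Remark~\ref{remHP}(iii), that the target Hilbert polynomial is equivalent to $\deg E = 0$, i.e.\ $\sum_i \tr(\Gamma_i) = 0$, and you correctly observe that this sum is always an integer and can be shifted to $0$. But that only produces \emph{some} logarithmic connection of degree $0$ extending $L_\rho$; it does not land in $M_{DR}(\bar X/D,P)$, which parametrizes \emph{semistable} connections of Hilbert polynomial $P$. When $\rho$ is not simple, the shifted Deligne extension can easily fail to be semistable (it may contain a $\nabla$-stable subsheaf of positive slope), and a single global twist by $\cO(n\cdot p_i)$ shifts \emph{all} residue eigenvalues at $p_i$ uniformly, so it cannot be used to rebalance slopes of the factors independently. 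What the cited argument actually does is exploit that a point of $M_B(X,r)(\bC)$ is a semisimple local system $L = \bigoplus_j L_j$, extend each simple $L_j$ to degree $0$ separately (the possible degrees of extensions of $L_j$ form $\bZ$, achieved by shifting a single residue eigenvalue at a single puncture), invoke Lemma~\ref{lemDEs}(ii) to conclude each $E_j$ is stable, and then take the direct sum, which is polystable of slope $0$ and hence semistable. None of this appears in your proposal. Relatedly, your route for (2) through the representation-space map $R_{DR}\to R_B$ asks for realizations of \emph{all} local systems, not merely semisimple ones, which is strictly more than the theorem asserts and more than your construction delivers.
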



%


The following questions are open:

\begin{que}  Are the two maps $RH$ in (\ref{eqDidr}) surjective? Or, at least, are their images Zariski constructible?
\end{que}

The main reason behind Theorem \ref{thmH21} is the following: 

\begin{lemma}\label{lemDEs}
 $\ $
 \begin{enumerate}[(i)]
\item (\cite{deligne06}, see also \cite[Ch. 5]{H}) The Deligne extensions of a local system $L$ on $X$ are locally free logarithmic connections giving rise to $L$.
\item (\cite[Proposition 2.3]{N}) Any logarithmic connection on $(\bar{X},D)$ giving rise to a simple local system on $X$ is stable. 
\item (\cite[page 89]{S1}) The category of semistable logarithmic connections of given normalized Hilbert polynomial $p_0$, i.e. $p_0(t)=p(E,t)/rk(E)$, is abelian.
\end{enumerate}
\end{lemma}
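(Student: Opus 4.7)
The plan is to handle the three parts independently, since each invokes a distinct classical construction.

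For (i), the Deligne canonical extension is built locally and glued. Around a point of $D$, choose local coordinates with $D=\{z_1\cdots z_k=0\}$. A local system $L$ near this point is determined by its stalk $V$ at a base point and commuting monodromies $T_i$ around each component $D_i$. Fix a section $\tau:\bC/\bZ\hookrightarrow \bC$ (the choice of ``Deligne extension'' corresponding to the choice of fundamental domain) and set $\Gamma_i:=\tau\bigl(\tfrac{1}{2\pi i}\log T_i\bigr)\in\Hom(V,V)$. On the polydisc put $E:=\cO\otimes_\bC V$ with $\nabla=d-\sum_i\Gamma_i\,dz_i/z_i$. This is locally free, has logarithmic poles along $D$, and its horizontal sections on the punctured polydisc are $z_1^{-\Gamma_1}\cdots z_k^{-\Gamma_k}v$, producing the prescribed monodromy. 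Two choices of local trivialization differ by a change of frame compatible with the connection, so the local data glue to a global locally free logarithmic connection on $(\bar X,D)$ whose restriction to $X$ recovers $L$.

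For (ii), I would prove the stronger statement that $(E,\nabla)$ admits no proper non-zero $\sD_{\bar X}(\log D)$-submodule, from which stability follows vacuously. Let $F\subsetneq E$ be a non-zero $\cO_{\bar X}$-coherent $\sD_{\bar X}(\log D)$-submodule. As a subsheaf of the torsion-free sheaf $E$, $F$ is torsion-free, hence $F|_X\neq 0$. Then $(F|_X,\nabla|_X)$ is a flat sub-connection of $(E|_X,\nabla|_X)$, and via the Riemann--Hilbert correspondence on $X$ this gives a non-zero sub-local system of $L$. Simplicity of $L$ forces $F|_X=E|_X$, so $E/F$ is a coherent sheaf supported on $D$. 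Since $F\subsetneq E$, the quotient $E/F$ is non-zero, while $\rk(F)=\rk(E)$. The Hilbert polynomial of a non-zero coherent sheaf is strictly positive for $m\gg 0$, so $p(F,m)=p(E,m)-p(E/F,m)<p(E,m)$, which gives the strict inequality $p(F,m)/\rk(F)<p(E,m)/\rk(E)$ required for stability.

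For (iii), I would run the standard Simpson-style argument that the subcategory of semistable objects of a fixed slope inside an abelian category of $\Lambda$-modules is itself abelian. Given a morphism $\phi:(E_1,\nabla_1)\to(E_2,\nabla_2)$ between semistable logarithmic connections with common normalized Hilbert polynomial $p_0$, the $\Lambda$-submodules $\ker\phi\subset E_1$ and $\mathrm{im}\,\phi\subset E_2$ satisfy $p(\ker\phi)/\rk\le p_0$ and $p(\mathrm{im}\,\phi)/\rk\le p_0$ by the semistability of $E_1$ and $E_2$, respectively. Additivity of Hilbert polynomial and rank in $0\to\ker\phi\to E_1\to\mathrm{im}\,\phi\to 0$ forces both inequalities to be equalities, and the analogous bookkeeping for $0\to\mathrm{im}\,\phi\to E_2\to\mathrm{coker}\,\phi\to 0$ yields that $\mathrm{coker}\,\phi$ is also semistable with normalized polynomial $p_0$. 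I expect the main technical obstacle in this step to be the torsion-freeness built into the definition of logarithmic connection: one must verify that the cokernel is torsion-free, which is handled in the standard way by killing any torsion subsheaf and observing that the slope equalities just obtained exclude its presence.
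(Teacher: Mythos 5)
The paper offers no proof of this lemma — it is a list of three classical facts, each cited to its source. Your sketches are correct in substance and essentially reconstruct the cited arguments, so let me confine myself to flagging one outright misstatement and two points where the sketch should be tightened.

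In (ii) you open by saying you "would prove the stronger statement that $(E,\nabla)$ admits no proper non-zero $\sD_{\bar X}(\log D)$-submodule, from which stability follows vacuously." That stronger statement is false: $E\otimes\cO_{\bar X}(-D_1)$ is a proper non-zero $\Lambda$-submodule of $E$, since $dz_1 = z_1\,\tfrac{dz_1}{z_1}$ shows that twisting by $\cO(-D_i)$ preserves the logarithmic structure. What you \emph{actually} prove is different and correct: that every non-zero $\Lambda$-submodule $F\subsetneq E$ must satisfy $F|_X=E|_X$, hence $\rk F=\rk E$ and $E/F$ is a non-zero sheaf supported on $D$, whence $p(F,m)<p(E,m)$ for $m\gg 0$ — this is exactly stability, not a vacuous reduction. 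You should also make explicit the small step that $F|_X$, being $\cO_X$-coherent and $\sD_X$-stable, is automatically a vector sub-bundle of $E|_X$ (a $\cO$-coherent $\sD$-module on a smooth variety is locally free), which is what legitimizes invoking the Riemann--Hilbert correspondence to get a sub-local system of $L$. In (iii), the torsion-freeness point you gesture at deserves the one extra sentence: if $T\subset\operatorname{coker}\phi$ is the torsion subsheaf and $G\subset E_2$ its preimage, then $\rk G=\rk(\operatorname{im}\phi)$ while $p(G)=p(\operatorname{im}\phi)+p(T)$, so $T\neq 0$ would give $p(G,m)/\rk G>p_0(m)$ for $m\gg 0$, contradicting semistability of $E_2$; the same weighted-average bookkeeping shows $\ker\phi$ and $\operatorname{coker}\phi$ are themselves semistable of normalized polynomial $p_0$. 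Part (i) is the standard Deligne construction; the sign on the exponent in $z^{-\Gamma}v$ is a convention choice and immaterial.
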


A consequence is the surjectivity of RH onto the simple locus.

\begin{defn}
Define $R^s_{DR}(\bar{X}/D, x_0, r)(\bC)$ and $M^s_{DR}(\bar{X}/D,r)(\bC)$ to be the   subsets of $R^{st}_{DR}(\bar{X}/D, x_0, r)(\bC)$ and $M^{st}_{DR}(\bar{X}/D,r)(\bC)$, respectively, corresponding to stable logarithmic connections whose associated local systems on $X$ are simple. That is,
$$R^s_{DR}(\bar{X}/D, x_0, r)(\bC)=  RH^{-1}(R^s_{B}(X, x_0, r)(\bC)),$$
$$M^s_{DR}(\bar{X}/D,r)(\bC) =   RH^{-1}(M^s_B(X, r)(\bC)) = q_{DR} (R^s_{DR}(\bar{X}/D, x_0, r)(\bC)).$$ 
\end{defn}

\begin{lemma}
The subsets $R^s_{DR}(\bar{X}/D, x_0, r)(\bC)$ and $M^s_{DR}(\bar{X}/D,r)(\bC)$ are Zariski open subsets of $R_{DR}(\bar{X}/D, x_0, r)(\bC)$ and $M_{DR}(\bar{X}/D,r)(\bC)$, respectively. 
\end{lemma}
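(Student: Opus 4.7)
The plan is to give a purely algebraic characterization of $R^s_{DR}$ inside the already-known Zariski-open stable locus $R^{st}_{DR}$, bypassing the fact that the defining condition $R^s_{DR}=RH^{-1}(R^s_B)$ uses only the analytic morphism $RH$. Recall that $R^{st}_{DR}$ is Zariski-open in $R_{DR}$ by \cite[Theorem 4.10]{S1}, and by Lemma \ref{lemDEs}(ii) we have $R^s_{DR}\subseteq R^{st}_{DR}$, so it is enough to show openness inside $R^{st}_{DR}$.

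First I would use the algebraic Riemann--Hilbert correspondence on $X$: for $(E,\nabla)\in R^{st}_{DR}$, the associated local system on $X$ is simple iff the algebraic flat bundle $(E|_X,\nabla|_X)$ has no proper nonzero sub-$\sD_X$-module. Given any such sub-flat-bundle $V'\subset E|_X$, the subsheaf $F:=E\cap j_*V'\subset E$ (with $j\colon X\hookrightarrow \bar X$) is a saturated $\Lambda$-submodule of $E$ with $0<\rk(F)<r$, and conversely any such $F$ restricts to a proper nonzero sub-$\sD_X$-module on $X$. So the condition ``$(E,\nabla)\in R^s_{DR}$'' is equivalent to the purely algebraic condition that no such $F$ exists.

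Next I would show that the ``bad locus'' in $R^{st}_{DR}$ where such an $F$ exists is Zariski-closed, via a relative Quot-scheme argument. For each Hilbert polynomial $P'$ of a candidate quotient $E/F$, the $\Lambda$-compatibility condition cuts out a closed subscheme of the usual relative Quot-scheme $\mathrm{Quot}_{\bar X\times R^{st}_{DR}/R^{st}_{DR}}(\mathcal E, P')$, which is projective over $R^{st}_{DR}$ since $\bar X$ is projective. Hence the image in $R^{st}_{DR}$ is Zariski-closed for each $P'$. By the standard boundedness of sub-$\Lambda$-modules of a bounded family of semistable $\Lambda$-modules (in Simpson's framework for moduli of $\Lambda$-modules), only finitely many $P'$ contribute on each connected component, so the full bad locus is a finite union of Zariski-closed subsets; its complement in $R^{st}_{DR}$ is $R^s_{DR}$, which is therefore Zariski-open in $R_{DR}$.

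For $M^s_{DR}=q_{DR}(R^s_{DR})$, since the simplicity of the associated local system is independent of the chosen frame, the subset $R^s_{DR}$ is $\GL_r$-invariant, and since $q_{DR}$ is a (universal) good GIT quotient for the reductive group $\GL_r$, it sends $\GL_r$-invariant Zariski-open subsets to Zariski-open subsets; hence $M^s_{DR}$ is Zariski-open in $M_{DR}$. The main technical obstacle is the boundedness statement for sub-$\Lambda$-modules that controls the finitely-many contributing $P'$; an alternative route, avoiding Quot schemes, would upgrade the Euclidean-openness of $RH^{-1}(R^s_B)$ to Zariski-openness via algebraic upper semi-continuity of $\dim\mathrm{End}_{\sD_X}(E|_X,\nabla|_X)$ computed through logarithmic de Rham cohomology on the projective $\bar X$, but this requires delicate control over the residue eigenvalues in order to compare log-de Rham and open-$X$ de Rham cohomologies.
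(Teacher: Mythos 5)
Your approach is essentially the one the paper takes: the paper translates simplicity of the associated local system to the algebraic condition that $(E,\nabla)|_X$ is irreducible as a flat bundle, and then says to ``adapt the proof of openness of the (semi)stable locus'' from Nitsure and Simpson --- which is precisely the relative Quot-scheme plus boundedness argument that you spell out in detail. One small imprecision worth flagging: a good GIT quotient does \emph{not} in general send $\GL_r$-invariant Zariski-open subsets to open subsets (invariance is not enough; one needs the open set to be \emph{saturated}). Here this is harmless because $R^s_{DR}\subset R^{st}_{DR}$ lies in the GIT-stable locus, where $q_{DR}$ is a geometric (orbit-space) quotient, so every invariant open is automatically saturated and its image is open --- but this is the reason, not the general GIT-quotient property you invoked. (The paper's own phrasing is similarly terse on this point.)
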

\begin{proof}For a logarithmic connection $(E,\nabla)$, the condition that the associated local system on $X$ is simple is equivalent to the algebraic condition that the restriction of $(E,\nabla)$ to $X$ is irreducible. This is a Zariski closed open condition.  To see this, one can adapt the proof of openness of the (semi)stable locus in a family of logarithmic connections from \cite[Proposition 2.15]{N}, \cite[Lemma 3.7]{S1}. This  shows that $R^s_{DR}(\bar{X}/D, x_0, r)(\bC)$ is Zariski open, by the fine moduli space property of $R_{DR}(\bar{X}/D, x_0, r)(\bC)$. It follows that its image, $M^s_{DR}(\bar{X}/D,r)(\bC)$ is also Zariski open, since $R^s_{DR}(\bar{X}/D, x_0, r)(\bC)$ is $\GL_r(\bC)$-invariant.
\end{proof}

\begin{lemma}\label{lemSj}  The diagram (\ref{eqDidr}) induces a commutative diagram of surjective  maps of complex analytic spaces
\begin{equation}\label{eqDSi}
\begin{gathered}
\xymatrix{
   	R^s_{DR}(\bar{X}/D, x_0, r)^{an}    \ar@{->>}[d]_{q_{DR}}  \ar@{->>}[rr]^{RH} & & R^s_{B}(X, x_0, r)^{an}\ar@{->>}[d]_{q_B}\\
   	 M^s_{DR}(\bar{X}/D,r)^{an}\ar@{->>}[rr]^{RH} &  & M^s_B(X, r)^{an}.
}
\end{gathered}
\end{equation} 
\end{lemma}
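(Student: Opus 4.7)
The plan is to verify the commutativity of (\ref{eqDSi}) together with the surjectivity of each of its four maps. Commutativity is inherited from (\ref{eqDidr}) by restriction, once we note the compatibility built into the definitions: $R^s_{DR} = RH^{-1}(R^s_B)$, and $M^s_{DR}$ equals both $RH^{-1}(M^s_B)$ and $q_{DR}(R^s_{DR})$. The second equality in the definition of $M^s_{DR}$ already gives the surjectivity of $q_{DR}: R^s_{DR} \to M^s_{DR}$. For $q_B: R^s_B \to M^s_B$, any simple local system $L$ yields a framed representation by picking a basis of $L|_{x_0}$, and this furnishes a preimage lying automatically in $R^s_B$.

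For the representation-level Riemann--Hilbert map $RH: R^s_{DR} \to R^s_B$, I will use Deligne extensions. Given $(L,\xi) \in R^s_B(X,x_0,r)(\bC)$, Lemma \ref{lemDEs}(i) produces a locally free logarithmic connection $(E,\nabla)$ on $(\bar{X},D)$ whose restriction to $X$ corresponds to $L$, and Lemma \ref{lemDEs}(ii) guarantees that $(E,\nabla)$ is stable because $L$ is simple. The frame $\xi$ on $L|_{x_0}$ transfers canonically to a frame on $E|_{x_0}$ via the identification of the fiber of the flat vector bundle with the fiber of the associated local system at $x_0 \in X$. The Hilbert polynomial $P$ of $(E,\nabla)$ is pinned down by Remark \ref{remHP}(ii), so $(E,\nabla,\xi) \in R^{st}_{DR}(\bar{X}/D,x_0,P)$, and by construction $RH(E,\nabla,\xi) = (L,\xi) \in R^s_B$; thus $(E,\nabla,\xi)$ in fact lies in $R^s_{DR} = RH^{-1}(R^s_B)$, proving surjectivity.

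Finally, the surjectivity of $RH: M^s_{DR} \to M^s_B$ drops out by a short diagram chase: given $[L] \in M^s_B$, first lift to $(L,\xi) \in R^s_B$ via $q_B$, then to $(E,\nabla,\xi) \in R^s_{DR}$ via the preceding paragraph, and finally descend via $q_{DR}$; commutativity of (\ref{eqDidr}) gives $RH \circ q_{DR}(E,\nabla,\xi) = q_B \circ RH(E,\nabla,\xi) = [L]$. I do not anticipate any real obstacle here; the entire argument consists of packaging Lemma \ref{lemDEs} together with the (categorical) universal properties of $q_B$ and $q_{DR}$ into the restricted diagram, and the only point requiring a small amount of care is the matching of Hilbert polynomials when exhibiting the preimage of a given framed simple local system.
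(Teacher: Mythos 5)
Your proposal is correct and follows exactly the approach the paper has in mind: the paper does not write out an explicit proof for this lemma, but the sentence "A consequence is the surjectivity of $RH$ onto the simple locus" immediately after Lemma \ref{lemDEs} signals precisely the argument you give — take a Deligne extension (Lemma \ref{lemDEs}(i)) to produce a locally free logarithmic connection, invoke Nitsure's stability criterion (Lemma \ref{lemDEs}(ii)) to land in the stable locus, transfer the frame, and observe that the Hilbert polynomial lands you in one of the components of the disjoint union defining $R_{DR}(\bar X/D,x_0,r)$. The remaining surjectivities and the commutativity are, as you say, definitional or a short diagram chase.
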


The Riemann-Hilbert morphisms are easier to understand when restricted to a smaller  complex analytic open subset.

\begin{defn}
Define $R^{good}_{DR}(\bar{X}/D, x_0, r)(\bC)$ and $M_{DR}^{good}(\bar{X}/D,r)(\bC)$ to be the subsets of $R^s_{DR}(\bar{X}/D, x_0, r)(\bC)$ and $M^s_{DR}(\bar{X}/D,r)(\bC)$, respectively, corresponding to stable locally free logarithmic connections whose associated local systems on $X$ are simple and such that for each irreducible component $D_i$ of $D$ no two eigenvalues of the residue map $\Gamma_i$ differ by a nonzero integer. We call these   {\it good logarithmic connections}. 

It can be easily seen that $R^{good}_{DR}(\bar{X}/D, x_0, r)(\bC)$ and
$M_{DR}^{good}(\bar{X}/D,r)(\bC)$ are open analytic subsets of $R_{DR}(\bar{X}/D, x_0, r)(\bC)$ and $M_{DR}(\bar{X}/D,r)(\bC)$, respectively, obtained as the complement of a locally finite countable union of Zariski closed subsets of $R^s_{DR}(\bar{X}/D, x_0, r)(\bC)$ and $M^s_{DR}(\bar{X}/D,r)(\bC)$, respectively. In particular they underlie complex analytic open subspaces of  $R_{DR}(\bar{X}/D, x_0, r)^{an}$ and $M_{DR}(\bar{X}/D,r)^{an}$, respectively.
\end{defn}

\begin{prop}\label{propGood}$\;$

(1) (\cite[\S 8]{NS}, \cite[7.2]{N2})
The diagram (\ref{eqDid}) induces a commutative diagram of surjective  maps of complex analytic spaces
\begin{equation}\label{eqDSi2}
\begin{gathered}
\xymatrix{
   	R^{good}_{DR}(\bar{X}/D, x_0, r)^{an}    \ar@{->>}[d]_{q_{DR}}  \ar@{->>}[rr]^{RH} & & R^{s}_{B}(X, x_0, r)^{an}\ar@{->>}[d]_{q_B}\\
   	 M^{good}_{DR}(\bar{X}/D,r)^{an}\ar@{->>}[rr]^{RH} &  & M^{s}_B(X, r)^{an}
}
\end{gathered}
\end{equation} 
such that  the horizontal maps $RH$ are local analytic isomorphisms.

(2) (\cite[Theorem 7.1, Proposition 7.8]{S2}) If $X$ is projective, the horizontal maps $RH$ in the diagram (\ref{eqDid}) are analytic isomorphisms.
\end{prop}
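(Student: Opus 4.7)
The plan is to prove part (1) in three stages—well-definedness with the indicated target, surjectivity, and the local analytic isomorphism property—and then derive part (2) as a global upgrade in the projective case. First I would check that for $(E,\nabla,\xi)\in R^{good}_{DR}(\bar X/D,x_0,r)(\bC)$, the pair $(E,\nabla)$ is stable by Lemma~\ref{lemDEs}(ii) and its associated local system on $X$ is simple by definition of the good locus, so that $RH$ sends $R^{good}_{DR}$ into $R^{s}_B$. Equivariance under $\GL_r$ together with the fact that the quotient maps $q_{DR}$ and $q_B$ restrict to geometric quotients on the stable loci, combined with diagram (\ref{eqDid}), then yields the commutative square (\ref{eqDSi2}) with the indicated sources and targets.

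For surjectivity, given a simple local system $L$ on $X$ with monodromy $T_i\in\GL_r(\bC)$ around the irreducible component $D_i$, I would choose for each $i$ a set of logarithms $\mu_{i,1},\ldots,\mu_{i,r}\in\bC$ of the eigenvalues of $T_i$ no two of which differ by a nonzero integer; this is always possible because the set of eigenvalues is finite. The Deligne canonical extension with this residue datum, Lemma~\ref{lemDEs}(i), produces a locally free logarithmic connection $(E,\nabla)$ whose associated local system on $X$ is $L$ and whose residue $\Gamma_i$ has eigenvalues $\mu_{i,j}$. This connection is stable by Lemma~\ref{lemDEs}(ii) and good by construction, giving surjectivity of the bottom $RH$ in (\ref{eqDSi2}); surjectivity of the top $RH$ then follows by pullback along $q_{DR}$.

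The heart of the proposition is the local analytic isomorphism property. At a point $(E,\nabla,\xi)\in R^{good}_{DR}$, the fine moduli property identifies the tangent space with a frame-normalized version of $\bH^1$ of the logarithmic de Rham complex
\[
\mathrm{End}(E)\xrightarrow{[\nabla,\,\cdot\,]}\mathrm{End}(E)\otimes\Omega^1_{\bar X}(\log D)\to \mathrm{End}(E)\otimes\Omega^2_{\bar X}(\log D)\to\cdots,
\]
while the tangent space of $R^{s}_B$ at the corresponding framed local system is the analogous cocycle space for $\pi_1(X,x_0)$-cohomology with coefficients in $\mathrm{End}(L)$. Under the non-resonance condition defining the good locus, the Deligne logarithmic comparison theorem identifies the two hypercohomologies, and so the differential of $RH$ is an isomorphism at every good point. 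Injectivity of $RH$ on the good locus is Deligne's uniqueness of non-resonant extensions (an isomorphism of connections inducing a given isomorphism of framed local systems is forced to preserve the frame and hence to be uniquely determined), and a holomorphic injection between complex manifolds with invertible differential everywhere is a local analytic isomorphism. Descending along the free $\GL_r$-actions on the stable/simple loci gives the same conclusion for the bottom row of (\ref{eqDSi2}).

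For part (2), when $X=\bar X$ is projective, $D=\emptyset$, so the non-resonance condition is vacuous and $R^{good}_{DR}=R^{s}_{DR}$, and similarly at the moduli level. To upgrade the local isomorphism of part (1) to a global one, I would combine the surjectivity of Theorem~\ref{thmH21}(1)(ii), the bijectivity on points coming from the classical Riemann–Hilbert correspondence, and the properness of $RH$ in the projective case, which is a consequence of Simpson's nonabelian Hodge theory \cite{S2}: a proper holomorphic bijection between complex analytic spaces that is a local analytic isomorphism is a global analytic isomorphism. The main obstacle throughout is the log de Rham versus Betti comparison in the quasi-projective case; in higher dimension this is a statement about hypercohomology rather than about $H^1$ of curves, and it is precisely the non-resonance hypothesis that makes Deligne's log comparison applicable without needing to grow poles or twist by additional divisorial components.
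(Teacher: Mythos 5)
The paper does not give its own proof of this proposition; it cites \cite[\S 8]{NS}, \cite[7.2]{N2} for part (1) and \cite[Theorem 7.1, Proposition 7.8]{S2} for part (2). So I evaluate your argument on its own merits.

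Your well-definedness and surjectivity stages are fine: the commutativity of the square is immediate from diagram (\ref{eqDid}) together with the fact that $q_{DR}$ and $q_B$ restrict to the stable/simple loci, and the surjectivity via the Deligne extension with residue eigenvalues chosen in a fundamental domain of $\bC/\bZ$ (non-resonance then being automatic) is exactly how one sees Lemma \ref{lemSj}.

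The heart of the argument, however, has a genuine gap. You assert that ``injectivity of $RH$ on the good locus is Deligne's uniqueness of non-resonant extensions.'' This is false: Deligne's uniqueness is uniqueness \emph{given} a choice of residue data, not uniqueness of a non-resonant extension. Already for $X=\bC^*$, $\bar X=\bP^1$, $D=\{0,\infty\}$, $r=1$, the non-resonance condition is vacuous and the same rank-one local system is hit by infinitely many good logarithmic connections (one for each integer translate of the residue), living in different components of $M_{DR}(\bar X/D,1)$. So $RH$ is manifestly not injective on $M^{good}_{DR}$; it is only a \emph{local} isomorphism, and your route to this conclusion (holomorphic injection with invertible differential) rests on a false premise. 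The correct argument — and this is what \cite{N2} and \cite{NS} do — is not a pointwise tangent-space comparison but a comparison of the full (framed) deformation functors: one must show that under the non-resonance hypothesis the formal/analytic local deformation theory of a good logarithmic connection and that of its associated local system agree, since the schemes $R^{good}_{DR}$ and $R^s_B$ can be singular and an isomorphism on Zariski tangent spaces does not imply a local analytic isomorphism at a singular point. Your phrase ``the differential of $RH$ is an isomorphism'' is only sufficient on the smooth locus.

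Part (2) also has a gap of a different kind. The statement concerns the full diagram (\ref{eqDid}), including non-simple local systems and strictly semistable connections, whereas your upgrade ``surjectivity $+$ bijectivity $+$ properness $+$ local iso from part (1)'' can at best conclude on the stable/simple locus, which is all that part (1) addresses. You would still need a separate argument at non-stable points. Moreover, the properness of $RH$ ``as a consequence of Simpson's nonabelian Hodge theory'' is not something you can invoke here without circularity: the cited references \cite[Theorem 7.1, Proposition 7.8]{S2} are precisely the statement being proved, and Simpson's actual proof proceeds by showing that $R_{DR}^{an}$ and $R_B^{an}$ both analytically represent the same functor of framed analytic families and constructing the inverse morphism directly — not via a properness-plus-bijection argument.
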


\subsection{Moduli-absolute sets}\label{subAbs} Absoluteness for a set of local systems is an abstract categorical notion, and absolute sets are subject to the Special Varieties Package Conjecture. Moduli-absoluteness is a more concrete notion introduced in \cite[Section 7]{BW} via de Rham moduli spaces, in order to  make the conjecture on absolute sets of local systems more approachable. The definition of moduli-absoluteness of \cite[Section 7]{BW} needs a slight correction. While we give in \ref{secApp} a short list of corrections, we go however in this subsection into the details of the comparison between moduli-absoluteness with absoluteness.

    We keep the setup and notation from the previous subsection. We let  $K\subset\bC$ be a field.
     For each $\sigma\in \text{Gal}(\mathbb C/\mathbb Q)$, the base change  $X^\sigma\ra X$   of $X$ via $\sigma$ is compatible with the base changes for $\bar{X}$, $D$, and we denote by  
     \be\label{eqpsi}
     p_\sigma\colon  M_{DR}(\bar{X}/D,r)(\mathbb C)\to  M_{DR}(\bar{X}^\sigma/D^\sigma, r)(\mathbb C)\ee
       the  map induced by base-changing  logarithmic connections. The map $p_\sigma$ is also the base change of the underlying reduced closed $\bC$-subscheme of $M_{DR}(\bar{X}/D,r)$ via $\sigma$, and hence it is also a homeomorphism for the Zariski topology. In particular $p_\sigma$ is bijective. We consider the diagram of maps
 \be\label{eqDrH}
\begin{gathered}
\xymatrix{
   	M_{DR}(\bar{X}/D,r)(\mathbb C)   \ar[d]_{RH}  \ar[rr]^{p_\sigma} & & M_{DR}(\bar{X}^\sigma/D^\sigma,r)(\mathbb C)\ar[d]_{RH}\\
   	 M_B(X,r)(\bC)&  &  M_B(X^\sigma,r)(\bC).
}
\end{gathered}
\ee

The following is clear:

\begin{lemma}\label{lemSame}
If $S\subset M_B(X,r)(\bC)$ is contained in the image of the analytic $RH$ map,  $\sigma\in Gal(\bC/\bQ)$, and $S^\sigma$ is defined as in (\ref{eqSss}), then $S^\sigma$ also equals ${RH}\circ p_\sigma\circ {RH}^{-1}(S)$ with $RH, p_\sigma$ as in (\ref{eqDrH}).
\end{lemma}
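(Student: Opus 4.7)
The plan is to verify the equality of subsets of $M_B(X^\sigma,r)(\bC)$ pointwise, by tracing a single $[L]\in S$ through both compositions and using that the moduli Riemann-Hilbert maps and the categorical ones agree at the level of underlying objects.

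Fix $[L]\in S$. By hypothesis, choose a lift $[(E,\nabla)]\in M_{DR}(\bar X/D,r)(\bC)$ with $RH_{\mathrm{mod}}([(E,\nabla)])=[L]$, where $RH_{\mathrm{mod}}$ denotes the moduli map in (\ref{eqDrH}). The pointwise description of $RH_{\mathrm{mod}}$ recalled in \ref{RH} says that $L$ is the semisimplification of $\ker(\nabla|_X)$; equivalently, under the categorical Riemann-Hilbert equivalence on $X$, $L$ corresponds to the semisimplification $M$ of the regular holonomic $\sD_X$-module $(E|_X,\nabla|_X)$. The moduli composition then sends $[L]$ to $RH_{\mathrm{mod}}([(E^\sigma,\nabla^\sigma)])$, i.e.\ to the semisimplification of $\ker(\nabla^\sigma|_{X^\sigma})$, while the categorical composition sends $[L]$ to the local system corresponding to the $\sD_{X^\sigma}$-module $p_\sigma(M)$ (where $p_\sigma$ here denotes the categorical base change on $\sD$-modules from (\ref{eqDrHg})).

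The key compatibility to verify is that $p_\sigma(M)$ is isomorphic to the semisimplification of $(E^\sigma|_{X^\sigma},\nabla^\sigma|_{X^\sigma})$: the categorical $p_\sigma$ is an equivalence of abelian categories, so preserves semisimplification, and $\sigma$-base change of $\sD$-modules commutes with restriction from $(\bar X,D)$ to $X$, since both operations are defined by scheme-theoretic pullback along the same morphism. Applying the categorical $RH$ on $X^\sigma$ to $p_\sigma(M)$ then yields exactly the semisimplification of $\ker(\nabla^\sigma|_{X^\sigma})$, matching the moduli side. The categorical composition does not depend on the choice of lift $[(E,\nabla)]$, and the resulting class lies in $M_B(X^\sigma,r)(\bC)$; running this over all $[L]\in S$ gives the claimed equality. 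The only technical point is the compatibility of $\sigma$-base change with restriction and with semisimplification, which is essentially tautological once one unfolds the definitions of the two $p_\sigma$'s.
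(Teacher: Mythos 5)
Your proof is correct, and since the paper dispenses with this lemma by writing ``The following is clear'' without giving an argument, your write-up supplies precisely the verification the authors left implicit. The two compatibilities you isolate — that $\sigma$-base change commutes with restriction from $(\bar X, D)$ to $X$ (because both are defined by pullback along a Cartesian square), and that the categorical $p_\sigma$ preserves semisimplification (being an exact equivalence of abelian categories) — are exactly the content hiding behind the word ``clear.'' One small point worth making explicit, which your argument in fact yields but does not flag: since the moduli-level $RH^{-1}([L])$ may be a large fiber, your pointwise computation shows that $RH \circ p_\sigma$ is \emph{independent of the choice of lift} $(E,\nabla)$, which is what prevents the moduli-side set $RH\circ p_\sigma\circ RH^{-1}(S)$ from being strictly larger than the categorical $S^\sigma$; this is the same phenomenon recorded separately in Lemma~\ref{lemFib}(1) of the paper.
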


\begin{definition}\label{defAbs}  $\;$

(1) A {\it moduli-absolute $K$-constructible (respectively, moduli-absolute $K$-closed) subset of $ M_{B}(X,r)(\mathbb C)$} is a subset $S$ contained in the image of the analytic $RH$ map from (\ref{eqDrH}) and such that for every $\sigma\in \text{Gal}(\mathbb C/\mathbb Q)$,
	  $S^\sigma$
  is $K$-constructible  (respectively,   $K$-closed) in $M_{B}(X^\sigma, r)(\mathbb C)$.     

(2) More generally, if $M\subset M_B(X,r)(\bC)$ is absolute $K$-constructible, a {\it moduli-absolute $K$-constructible (respectively, moduli-absolute $K$-closed) subset of $M$} is a subset $S$ contained in the intersection of the image of $RH$ with $M$, such that for every $\sigma\in \text{Gal}(\mathbb C/\mathbb Q)$, 
	  $S^\sigma$
is $K$-constructible  (respectively,   $K$-closed) in $M^\sigma$.  
\end{definition}

\begin{rmk}$\;$

(1) The first definition is  a slightly adjusted version of \cite[Definition 7.4.1]{BW} which needed a correction, cf.  \ref{secApp}. 

(2) If in the second part of the definition $M$ lies in $M_B^s(X,r)(\bC)$, then we know that it lies in the image of $RH$ already.  

(3) If the analytic map $RH$ is not surjective onto $M_B(X,r)(\bC)$, then $M_{DR}(\bar X/D,r)$ is not the right moduli to capture all absolute subsets of $M_B(X,r)$. 
\end{rmk}

\begin{prop}\label{propBiAlg}$\;$ 

(1) If $S\subset M_B(X,r)(\bC)$ is contained in the image of the analytic $RH$ map from (\ref{eqDrH}), then: $S$ is moduli-absolute $K$-constructible (respectively, moduli-absolute $K$-closed)   iff it is  absolute $K$-constructible (respectively, absolute $K$-closed). 

(2) More generally, if $M\subset M_B(X,r)(\bC)$ is absolute $K$-constructible, and if $S\subset M$ is contained in the image of the analytic $RH$ map from (\ref{eqDrH}), then: $S$ is moduli-absolute $K$-constructible (respectively, moduli-absolute $K$-closed) in $M$  iff it is  absolute $K$-constructible (respectively, absolute $K$-closed) in $M$.

\end{prop}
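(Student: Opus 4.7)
The plan is to reduce both parts of the proposition to a direct combination of Lemma \ref{lemSame} (which identifies the two a priori different notions of $S^\sigma$) with Proposition \ref{lemAnlg} (the unispace-free characterization of absoluteness).

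I would prove (2) first and deduce (1) as a special case. For (2), let $M \subset M_B(X,r)(\bC)$ be absolute $K$-constructible and let $S \subset M$ be contained in the image of the analytic $RH$ map from (\ref{eqDrH}). For each $\sigma \in Gal(\bC/\bQ)$, Lemma \ref{lemSame} tells us that the set $RH \circ p_\sigma \circ RH^{-1}(S)$ computed via the de Rham moduli $p_\sigma$ of (\ref{eqDrH}) equals the set $S^\sigma$ defined via the derived-category formula (\ref{eqSss}); similarly for $M^\sigma$. Now Definition \ref{defAbs}(2) of moduli-absolute $K$-constructibility (resp.\ $K$-closedness) in $M$ requires that $S^\sigma$, interpreted via de Rham moduli, be $K$-constructible (resp.\ $K$-closed) in $M^\sigma$ for every $\sigma$; meanwhile Proposition \ref{lemAnlg} --- applicable because $M$ is absolute $K$-constructible --- characterizes absolute $K$-constructibility (resp.\ $K$-closedness) in $M$ by the condition that $S^\sigma$, interpreted in the derived-category sense, be $K$-constructible (resp.\ $K$-closed) in $M^\sigma$ for every $\sigma$. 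By Lemma \ref{lemSame} these two conditions are literally the same, so the equivalence follows.

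To deduce (1), I would take $M := M_B(X,r)(\bC)$ inside $M_B(X)(\bC)$. This $M$ is absolute $\bQ$-closed: the scheme $M_B(X,r)$ is defined over $\bQ$ and its base change along $\sigma$ is $M_B(X^\sigma,r)$, which is $\bQ$-closed in $M_B(X^\sigma)(\bC)$; hence $M$ is absolute $K$-constructible for any subfield $K\subset\bC$. Since the rank is preserved by the Riemann--Hilbert correspondence, any $S$ in the image of $RH$ lying in $M_B(X,r)(\bC)$ lies inside $M$, so part (1) is exactly the specialization of part (2) to this choice of $M$, using also Lemma \ref{lemLll2}(2) to pass between absoluteness in $M$ and absoluteness in $M_B(X)(\bC)$.

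I do not anticipate any real obstacle here: the proposition is essentially an organizational identification, asserting that the concrete de Rham-moduli version of absoluteness coincides with the abstract unispace version on the image of $RH$. All the substantive content sits in the already-available Lemma \ref{lemSame} (compatibility between derived-category and de Rham base change under $RH$) and Proposition \ref{lemAnlg} (unispace-free characterization of absoluteness); this last step is pure bookkeeping. The only point requiring a moment's care is confirming that the full component $M_B(X,r)(\bC)$ is absolute, which is immediate from the fact that both $M_B(X,r)$ and the action of $\sigma$ on it are defined over $\bQ$.
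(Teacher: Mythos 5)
Your proof is correct and matches the paper's: the paper's own one-line proof reads ``This follows directly from Proposition \ref{lemAnlg} and Lemma \ref{lemSame},'' and you have unpacked exactly that combination. The only minor inefficiency is deducing (1) from (2) via Lemma \ref{lemLll2}(2) and the clopenness of $M_B(X,r)(\bC)$ in $M_B(X)(\bC)$; one can equally well apply Proposition \ref{lemAnlg} with $M=M_B(X)(\bC)$ directly and note that, since $M_B(X^\sigma,r)(\bC)$ is a clopen union of components of $M_B(X^\sigma)(\bC)$ defined over $\bQ$, $K$-constructibility (resp.\ $K$-closedness) of $S^\sigma$ in one is the same as in the other, covering both the constructible and closed cases of (1) uniformly.
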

\begin{proof}  This follows directly from Proposition  \ref{lemAnlg} and Lemma \ref{lemSame}.
\end{proof}    
    
\begin{rmk} $\;$

(1) If the analytic $RH$ map from (\ref{eqDrH}) and its version from $R_{DR}(\bar{X}/D,x_0,r)(\bC)$ to $R_B(X,x_0,r)(\bC)$ are both surjective, part (1)  was proved in  \cite[Proposition 7.4.4]{BW}, if the correction as in cf. \ref{secApp} is taken into account.

(2) At least in the  case $M\subset M_B^s(X,r)(\bC)$, the definition of moduli-absolute subsets of $M$ does not depend on the choice of good compactification $(\bar{X},D)$ of $X$ used to define the de Rham moduli space. 

\end{rmk}

The analytic $RH$ map can have non-trivial fibers. However, since the Riemann-Hilbert correspondence is an equivalence of categories, the fibers satisfy some constraints:

\begin{lemma}\label{lemFib} For all subsets $S\subset M_B(X,r)(\bC)$ contained in the image of the analytic $RH$ map, and all $\sigma\in  Gal (\mathbb C/\mathbb Q)$, the following hold:
\begin{enumerate}
\item  $p_\sigma$ sends any fiber of $RH: M_{DR}(\bar{X}/D,r)(\mathbb C)\to M_B(X,r)(\mathbb C)$ bijectively onto a fiber of $RH: M_{DR}(\bar{X}^\sigma/D^\sigma,r)(\mathbb C)\to M_B(X^\sigma,r)(\mathbb C)$. 
\item $RH^{-1}(S^\sigma)=p_\sigma(RH^{-1}(S))$.
\end{enumerate}
\end{lemma}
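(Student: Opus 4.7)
The plan is to deduce both parts from Lemma~\ref{lemSame}, which identifies the set operation $S \mapsto S^\sigma$ (defined in~(\ref{eqSss}) via the derived-category Riemann--Hilbert diagram~(\ref{eqDrHg})) with the composite $RH \circ p_\sigma \circ RH^{-1}$ applied to any subset lying in the image of the analytic $RH$ in~(\ref{eqDrH}).

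For (1), fix a point $L \in M_B(X,r)(\bC)$ in the image of $RH$ and let $F_L := RH^{-1}(L)$. Since the derived-category Riemann--Hilbert is an equivalence of categories, the operation $(\_)^\sigma$ from~(\ref{eqSss}) sends isomorphism classes to isomorphism classes, so $\{L\}^\sigma = \{L^\sigma\}$ for a well-defined point $L^\sigma \in M_B(X^\sigma,r)(\bC)$. Applying Lemma~\ref{lemSame} to $S=\{L\}$ gives
\[
\{L^\sigma\} \;=\; RH(p_\sigma(F_L)),
\]
so $p_\sigma(F_L) \subseteq RH^{-1}(L^\sigma)$, and in particular $L^\sigma$ lies in the image of the analytic $RH$ on the $\sigma$-side. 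Next, I would run the same argument with $\sigma$ replaced by $\sigma^{-1}$ and $L$ replaced by $L^\sigma$; using that $p_{\sigma^{-1}}$ is the inverse of $p_\sigma$ and that $(L^\sigma)^{\sigma^{-1}} = L$ by the functoriality of the derived-category $\sigma$-action, this yields $p_{\sigma^{-1}}(RH^{-1}(L^\sigma)) \subseteq F_L$, i.e.\ $RH^{-1}(L^\sigma) \subseteq p_\sigma(F_L)$. Combining both inclusions gives equality, and since $p_\sigma$ is a bijection on $M_{DR}(\bar{X}/D,r)(\bC)$ (as recalled after~(\ref{eqpsi})), its restriction is a bijection $F_L \xa{\sim} RH^{-1}(L^\sigma)$.

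For (2), the derived-category bijection gives $S^\sigma = \{L^\sigma : L \in S\}$, so using part (1) fiberwise,
\[
RH^{-1}(S^\sigma) \;=\; \bigcup_{L \in S} RH^{-1}(L^\sigma) \;=\; \bigcup_{L \in S} p_\sigma(RH^{-1}(L)) \;=\; p_\sigma\!\left(\bigcup_{L \in S} RH^{-1}(L)\right) \;=\; p_\sigma(RH^{-1}(S)).
\]

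The only substantive ingredient is Lemma~\ref{lemSame}, which already packages the compatibility between the derived-category and moduli-theoretic Riemann--Hilbert correspondences; beyond that, the argument is purely formal. Accordingly, I do not expect a real obstacle: the key observation is simply to apply Lemma~\ref{lemSame} symmetrically in $\sigma$ and $\sigma^{-1}$ in order to upgrade the a priori inclusion $p_\sigma(F_L) \subseteq RH^{-1}(L^\sigma)$ into an equality of fibers, after which part~(2) is a one-line bookkeeping.
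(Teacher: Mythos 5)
The paper states Lemma \ref{lemFib} without a written proof, noting only that it follows from the Riemann--Hilbert correspondence being a categorical equivalence; your argument correctly supplies the missing details and is exactly the intended reasoning. Applying Lemma \ref{lemSame} to the singleton $\{L\}$ gives $p_\sigma(F_L)\subseteq RH^{-1}(L^\sigma)$ (and shows $L^\sigma$ is in the image of $RH$), and the symmetric application with $\sigma^{-1}$ in place of $\sigma$ and $L^\sigma$ in place of $L$ — legitimate since $p_{\sigma^{-1}}=p_\sigma^{-1}$ and $(L^\sigma)^{\sigma^{-1}}=L$ by functoriality of base change along Galois elements — yields the reverse inclusion, after which part (2) is immediate set-theoretic bookkeeping.
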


\begin{rmk} The analog of Lemma \ref{lemOps} holds for moduli-absolute sets in $M$, for all absolute $K$-constructible subsets $M$ of $M_B(X)(\bC)$.
\end{rmk}

Moduli-absoluteness is closely related to bi-algebraicity:

\begin{proposition}\label{propBia} (\cite[Proposition 7.4.5]{BW}, cf. \ref{secApp}) Let $X$ be a smooth complex quasi-projective algebraic variety defined over a countable subfield $K$ of $\bC$. Let $\bar{X}$, $D$, $x_0$ be as above and assume all of them are defined over $K$.  Let $S$ be a moduli-absolute $K$-constructible subset of $M_B(X,r)(\bC)$. Then the Euclidean closure of any analytic irreducible component of $RH^{-1}(S)$ is a Zariski closed subset of $M_{DR}(\bar{X}/D,r)(\bC)$.
\end{proposition}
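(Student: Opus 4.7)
The plan is to run a bi-algebraicity argument. First, since $S$ is $K$-constructible in $M_B(X,r)(\bC)$, the set $S$ is Zariski constructible in the algebraic variety $M_B(X,r)$. As the Riemann-Hilbert morphism $RH\colon M_{DR}(\bar X/D,r)^{an}\to M_B(X,r)^{an}$ is analytic, the preimage $RH^{-1}(S)$ is a locally closed analytic subset of $M_{DR}(\bar X/D,r)^{an}$, with at most countably many analytic irreducible components by second countability. Fix one such component $Z$, let $\bar Z$ be its Euclidean closure (an irreducible closed analytic subset), and let $W\subseteq M_{DR}(\bar X/D,r)$ be the Zariski closure of $\bar Z$, an irreducible closed algebraic subvariety. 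Then $\bar Z\subseteq W(\bC)$, and the task reduces to the equality $\bar Z=W(\bC)$.

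Second, I would produce a countable field of definition for $W$. Since $M_{DR}(\bar X/D,r)$ is a $K$-scheme of finite type and $W$ is a closed subvariety of it, $W$ corresponds to a $\bC$-point of a Hilbert scheme of finite type over $K$, and so $W$ is defined over some finitely generated extension $L$ of $K$, which is countable because $K$ is countable. The analytification of an irreducible algebraic variety is irreducible as an analytic space, so the equality $\bar Z=W(\bC)$ is equivalent to the equality of analytic dimensions $\dim_{\bC,\mathrm{an}}\bar Z=\dim W$.

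The central step exploits the moduli-absoluteness of $S$ via the uncountable group $\mathrm{Gal}(\bC/L)$. For every $\sigma\in\mathrm{Gal}(\bC/L)$, one has $X^\sigma=X$ since $K\subseteq L$, and $p_\sigma$ acts on $M_{DR}(\bar X/D,r)(\bC)$ as an algebraic Galois automorphism preserving the subset $W(\bC)$. By Lemma~\ref{lemFib}, $p_\sigma(Z)\subseteq RH^{-1}(S^\sigma)$, and by moduli-absoluteness $RH^{-1}(S^\sigma)$ is again a locally closed analytic subset of $M_{DR}(\bar X/D,r)^{an}$ with at most countably many analytic irreducible components. Assuming for contradiction that $\bar Z$ is a proper analytic subset of $W^{an}$, the countable collection of analytic irreducible components (across all $\sigma$) whose Euclidean closure has Zariski closure exactly $W$ must absorb the uncountable Galois conjugates of $\bar Z$. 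A pigeonhole argument, playing the countability of $L$ and of analytic components against the uncountability of $\mathrm{Gal}(\bC/L)$ and of generic Galois orbits of points in $W(\bC)\setminus\bar Z$, should then force $\dim_{\mathrm{an}}\bar Z=\dim W$ and conclude $\bar Z=W(\bC)$.

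The main obstacle is precisely this last reconciliation. The Galois maps $p_\sigma$ are not continuous in the Euclidean topology, so $p_\sigma(\bar Z)$ is \emph{a priori} not even an analytic subset of $W^{an}$; one cannot simply transport Euclidean closures through $p_\sigma$. The argument must therefore separate the algebraic data on which $p_\sigma$ acts well (the subvariety $W$, defined over $L$) from the analytic data of the Euclidean closure $\bar Z$, and recombine them through the intertwining provided by Lemma~\ref{lemFib} together with the moduli-absoluteness of $S$. The countability of $L$ versus the uncountability of $\mathrm{Gal}(\bC/L)$ is the key leverage point, and the assumption that $K$ is countable is used essentially here.
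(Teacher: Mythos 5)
There is a genuine gap, and you essentially flag it yourself: the ``pigeonhole argument'' is asserted but not carried out, and you concede that you have not worked out how to ``recombine'' the algebraic and analytic data. As it stands, the proposal is a plan, not a proof.

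The missing idea is a measure-theoretic refinement of your pigeonhole, organized around $L$-generic points of $Z$ rather than of $W(\bC)\setminus\bar Z$. Concretely, the step you are missing runs as follows. Since $Z$ is Zariski dense in $W$ and $L$ is countable, there are only countably many proper $L$-subvarieties of $W$, each of which meets $Z$ in a set of measure zero; hence one can choose a point $z\in Z$ that is $L$-generic in $W$. The $\mathrm{Gal}(\bC/L)$-orbit of such a $z$ is precisely the set of \emph{all} $L$-generic points of $W(\bC)$, a set of full Lebesgue measure on $W^{an}$. On the other hand, by Lemma~\ref{lemFib} each conjugate $\sigma(z)=p_\sigma(z)$ lies in $RH^{-1}(S^\sigma)\cap W(\bC)$; since $\sigma$ fixes $K$, $S^\sigma$ is a $K$-constructible subset of $M_B(X,r)(\bC)$, and there are only countably many $K$-constructible subsets when $K$ is countable, each contributing at most countably many analytic irreducible components. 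So the full-measure orbit is covered by a countable family of analytic subsets of $W^{an}$, and by countable subadditivity one of them has positive measure, hence equals $W^{an}$. This gives $W(\bC)\subseteq RH^{-1}(S^{\sigma_0})$ for some $\sigma_0\in\mathrm{Gal}(\bC/L)$, and applying $p_{\sigma_0}^{-1}$ (which fixes $W(\bC)$ and sends $RH^{-1}(S^{\sigma_0})$ to $RH^{-1}(S)$, again by Lemma~\ref{lemFib}) yields $W(\bC)\subseteq RH^{-1}(S)$. Since $W$ is irreducible, $W^{an}$ is analytically irreducible, so $Z=W(\bC)$ and in particular $\bar Z=W(\bC)$.

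Two smaller issues. First, your phrasing suggests contemplating orbits of points in $W(\bC)\setminus\bar Z$; the decisive orbit is that of a generic point \emph{inside} $Z$ (the contradiction comes from covering a full-measure set by measure-zero pieces, not from something about the complement). Second, the crucial cardinality fact that makes the pigeonhole bite — that the family $\{S^\sigma:\sigma\in\mathrm{Gal}(\bC/K)\}$ is countable because $K$-constructible subsets of a finite-type $K$-scheme form a countable collection — is nowhere stated in your proposal, although it is the precise place where the hypothesis ``$K$ countable'' enters. You correctly identify that $p_\sigma$ is not Euclidean-continuous, but the resolution is to never transport Euclidean closures: one transports only the single point $z$ and uses measure on $W^{an}$, never on the image.
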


\begin{rmk}\label{propBia2}
 If $S$ is as above, there could be infinitely, but at most countable, many analytic irreducible components $T$ of $RH^{-1}(S)$. 
\end{rmk}

Let us spell out what this proposition implies for absolute closed subsets.

\begin{lemma}\label{lemko}
Let $X$, $\bar{X}$, $D$, $x_0$ be as above, all of them defined over a countable subfield $K$ of $\bC$. Let $S$  be a moduli-absolute $K$-closed subset of $M_B(X,r)(\bC)$. Let $S=\cup_{i\in I}S_i$ be the decomposition into Zariski irreducible components, with $I$ a finite set. Let $RH^{-1}(S)=\cup_{j\in J}T_j$ be the decomposition into analytic irreducible components, where $J$ can be infinite. Then:
\begin{enumerate}
\item  For all $j\in J$, $T_j$ is Zariski closed and is an analytic irreducible component of $RH^{-1}(S_i)$ for some $i\in I$ depending on $j$.
\item Conversely, if $T'$ is an analytic irreducible component of $RH^{-1}(S_i)$ for some $i\in I$, such that $RH(T')$ is Zariski dense in $S_i$ (such components always exists for every $i$), then $T'=T_j$ for some $j\in J$ depending on $T'$.  
\item If $\sigma\in  Gal (\bC/\bQ)$, then
   $p_\sigma$ induces a bijection between the irreducible components of $RH^{-1}(S)$ and those of  $RH^{-1}(S^\sigma)=p_\sigma(RH^{-1}(S))$.  
\end{enumerate}
\end{lemma}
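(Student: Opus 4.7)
The plan is to prove (1), (2), (3) in order, using Proposition~\ref{propBia} as the main tool for Zariski closedness and repeatedly exploiting the compatibility between analytic and Zariski irreducibility on algebraic varieties.

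For (1), I would start from the finite decomposition $RH^{-1}(S)=\bigcup_{i\in I}RH^{-1}(S_i)$ into analytic closed subsets. Any analytically irreducible subset of $RH^{-1}(S)$ must then sit inside one of the pieces: if $T_j=\bigcup_i(T_j\cap RH^{-1}(S_i))$ were a nontrivial union of proper analytic closed subsets, this would violate analytic irreducibility. So $T_j\subseteq RH^{-1}(S_i)$ for some $i$, and maximality of $T_j$ in the larger set $RH^{-1}(S)$ forces it to be maximal also in $RH^{-1}(S_i)$, hence an analytic irreducible component there. Since analytic irreducible components of a closed analytic subset are themselves Euclidean closed, $T_j$ coincides with its own Euclidean closure, which is Zariski closed by Proposition~\ref{propBia}.

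For (2), I would prove uniqueness first. If $T'$ is an analytic irreducible component of $RH^{-1}(S_i)$ whose image is Zariski dense in $S_i$, then by analytic irreducibility and maximality it is contained in some $T_j$; by (1), $T_j$ is an analytic irreducible component of $RH^{-1}(S_{i'})$ for some $i'$. If $i'\neq i$, then $T'\subseteq RH^{-1}(S_i)\cap RH^{-1}(S_{i'})=RH^{-1}(S_i\cap S_{i'})$ and hence $RH(T')\subseteq S_i\cap S_{i'}$, a proper Zariski closed subset of the irreducible variety $S_i$, contradicting the density assumption. So $i'=i$, and then $T'=T_j$ since both are analytic irreducible components of $RH^{-1}(S_i)$. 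For existence, let $\{T'_k\}_{k\in K}$ enumerate the analytic irreducible components of $RH^{-1}(S_i)$; this set is countable by the same argument as in Remark~\ref{propBia2}. Since $S_i\subseteq RH(M_{DR})$, we have $S_i=RH(RH^{-1}(S_i))=\bigcup_k RH(T'_k)$, so $S_i=\bigcup_k\overline{RH(T'_k)}^{Zar}$. The main obstacle here is to conclude from this that at least one closure equals $S_i$: an irreducible complex algebraic variety cannot be the union of countably many proper Zariski closed subvarieties (standard Baire/measure argument for uncountable fields), and this forces $\overline{RH(T'_k)}^{Zar}=S_i$ for some $k$.

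For (3), I would first note that each $T_j$ is Zariski closed by (1) and also Zariski irreducible, because a nontrivial Zariski decomposition would yield a nontrivial analytic decomposition contradicting analytic irreducibility. Since $p_\sigma$ is a homeomorphism for the Zariski topology, $p_\sigma(T_j)$ is a Zariski closed irreducible subvariety of $M_{DR}(\bar X^\sigma/D^\sigma,r)(\bC)$; because the irreducible components of an algebraic variety coincide with those of the associated analytic space, $p_\sigma(T_j)$ is also analytically irreducible. By Lemma~\ref{lemFib}(2), $p_\sigma(T_j)\subseteq p_\sigma(RH^{-1}(S))=RH^{-1}(S^\sigma)$, so $p_\sigma(T_j)$ is contained in some analytic irreducible component $U$ of $RH^{-1}(S^\sigma)$. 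The symmetric argument applied to $\sigma^{-1}$ gives an analytic irreducible component $T_{j'}$ of $RH^{-1}(S)$ with $T_j\subseteq p_{\sigma^{-1}}(U)\subseteq T_{j'}$; maximality forces $T_j=T_{j'}$, hence $p_\sigma(T_j)=U$. This establishes the bijection, with inverse induced by $p_{\sigma^{-1}}$.
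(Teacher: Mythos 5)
Your proof is correct and follows the same approach as the paper's very terse argument (Proposition~\ref{propBia} for Zariski closedness in (1), the analytic nature of $RH$ and the countable-component bound for (2), and $p_\sigma$ being a Zariski homeomorphism for (3)); you simply flesh out the details the paper leaves implicit, including the countable-union argument for existence in (2) and the $\sigma$/$\sigma^{-1}$ sandwich for the bijection in (3).
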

\begin{proof}
(1) is a direct application of Proposition \ref{propBia}. (2) holds since $RH$ is an analytic map. (3) follows from the first assertion in (1) together with the fact that $p_\sigma$ is a homeomorphism for the Zariski topology.
\end{proof}

The first two assertions say that every analytic irreducible component of $RH^{-1}(S_i)$ which is not algebraic is contained in some irreducible component, necessarily algebraic, of $RH^{-1}(S)$.

If $K=\bbQ$ it is expected that  $S^\sigma=\cup_{i\in I}S_i^\sigma$ is the Zariski irreducible decomposition of $S^\sigma$, see Question \ref{queIrr}, cf. Conjecture \ref{conj1}.  Even showing that $S_i^\sigma$ are constructible is at the moment open. For this, it would be useful to know if the analytic morphism $RH$ is special enough to allow one to write each $S_i$ in terms of the components $T_j$.

\subsection{The submoduli of simple local systems}\label{subslli}

We keep the setup from Section \ref{defderham}.

\begin{prop}\label{propZ2} $\;$

(1) For $\sigma\in  Gal (\mathbb C/\mathbb Q)$, 
$$
(M_B^s(X,r)(\bC))^\sigma = M_B^s(X^\sigma,r)(\bC),
$$ 
$$
p_\sigma(M_{DR}^s(\bar{X}/D,r)(\bC)) = M_{DR}^s(\bar{X}^\sigma/D^\sigma,r)(\bC),$$ $$p_\sigma(M_{DR}^{good}(\bar{X}/D,r)(\bC)) = M_{DR}^{good}(\bar{X}^\sigma/D^\sigma,r)(\bC).
$$

In particular, $M_B^s(X,r)(\bC)$ is absolute $\bQ$-constructible in $M_B(X,r)(\bC)$. Its complement is absolute $\bQ$-closed.

(2) $M_B^s(X,r)(\bC)$  is moduli-absolute $\bQ$-constructible in $M_B(X,r)(\bC)$.

(3) If $S_i$ with $i\in I$ denote the irreducible components of $M_B^s(X,r)(\bC)$, then $S_i^\sigma$ are the irreducible components of $M_B^s(X^\sigma,r)(\bC)$ for all $\sigma\in Gal(\bC/\bQ)$. Hence $S_i$ are (moduli-) absolute $\bbQ$-constructible.

\end{prop}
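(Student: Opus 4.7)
For part (1), I would verify the three equalities in turn. The first, $(M_B^s(X,r)(\bC))^\sigma = M_B^s(X^\sigma,r)(\bC)$, holds because $RH$ and $p_\sigma$ are both equivalences of abelian categories and therefore preserve simple objects; so the composition $\phi_\sigma := RH\circ p_\sigma \circ RH^{-1}$ carries classes of simple local systems on $X$ to classes of simple local systems on $X^\sigma$. The second equality follows by applying Lemma \ref{lemFib} to $S = M_B^s(X,r)(\bC)$ (permitted since $M_B^s \subseteq \operatorname{Image}(RH)$ by Lemma \ref{lemSj}) together with the definition $M_{DR}^s := RH^{-1}(M_B^s)$: one computes $p_\sigma(M_{DR}^s(\bar X/D,r)(\bC)) = RH^{-1}((M_B^s(X,r)(\bC))^\sigma) = M_{DR}^s(\bar X^\sigma/D^\sigma,r)(\bC)$. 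For the third equality, the good condition on a stable logarithmic connection is intrinsic to its residue eigenvalues, which $p_\sigma$ transports via $\sigma$; since $\sigma$ fixes $\bZ$ pointwise, the non-integral-difference condition is $p_\sigma$-invariant. The absolute $\bQ$-constructibility of $M_B^s(X,r)(\bC)$ and its complement follows because $M_B^s(X,r)$ is the Zariski open GIT-stable $\bQ$-subscheme of $M_B(X,r)$. Part (2) is then immediate: the two conditions of Definition \ref{defAbs} are supplied by Lemma \ref{lemSj} (containment in $\operatorname{Image}(RH)$) and part (1) ($\bQ$-constructibility of each $(M_B^s)^\sigma$).

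For part (3), my strategy is to transfer the irreducible-component structure through $RH$. Since $p_\sigma$ is a Zariski homeomorphism of $M_{DR}(\bar X/D,r)(\bC)$ onto $M_{DR}(\bar X^\sigma/D^\sigma,r)(\bC)$, by part (1) it restricts to a Zariski homeomorphism of the simple submoduli and hence bijectively permutes their Zariski irreducible components. The remaining task is to show that $RH$ descends this to a correspondence of Zariski irreducible components on the Betti side. For this I would combine Proposition \ref{propGood} (that $RH|_{M_{DR}^{good}} \to M_B^s$ is a surjective local analytic isomorphism) with Proposition \ref{propBia} applied to the moduli-absolute $\bQ$-constructible set $M_B^s$ from part (2): the latter ensures that the Euclidean closures of analytic irreducible components of $RH^{-1}(M_B^s) = M_{DR}^s$ are Zariski closed in $M_{DR}(\bar X/D,r)(\bC)$. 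Since $M_{DR}^{good}$ meets each Zariski irreducible component $Z$ of $M_{DR}^s$ in a Euclidean-dense subset (being the complement of a countable union of Zariski closed proper subsets), the local analytic iso property matches these algebraic closures with Zariski irreducible closed subsets of $M_B^s$. Composing this correspondence with $p_\sigma$ and its reverse on the $X^\sigma$-side then shows that each $S_i^\sigma$ is an irreducible Zariski closed subset of $M_B^s(X^\sigma,r)(\bC)$; running the argument with $\sigma^{-1}$ gives maximality, so $\{S_i^\sigma\}$ are exactly the irreducible components. Moduli-absolute $\bbQ$-constructibility of each $S_i$ then follows from Definition \ref{defAbs}, since each $S_i^\sigma$, as an irreducible component of the finite-type $\bQ$-scheme $M_B^s(X^\sigma,r)$, is a Zariski locally closed $\bbQ$-subscheme.

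The principal obstacle is in part (3): because $RH$ is only analytic, there is no direct algebraic mechanism to transfer Zariski irreducibility from $M_{DR}^s$ to $M_B^s$. The interplay between Proposition \ref{propGood} and Proposition \ref{propBia} is precisely what enables this transfer, but must be handled carefully. The secondary technicality is that $M_{DR}^{good}$ is the complement in $M_{DR}^s$ of a countable rather than finite union of Zariski closed subsets, so the analytic irreducible components of $RH^{-1}(S_i) \cap M_{DR}^{good}$ can form a countable family lying over each $S_i$, reflecting the typically infinite discrete fibers of $RH$, and they must be matched collectively against the single Zariski irreducible component $S_i$ on the Betti side.
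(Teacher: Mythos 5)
Your arguments for parts (1) and (2) are correct and essentially identical to the paper's: for (1) the first equality comes from $RH$ and $p_\sigma$ preserving simplicity, the second from Lemma \ref{lemFib}(2) applied to $M_{DR}^s=RH^{-1}(M_B^s)$, the third from $p_\sigma$ respecting the integrality condition on residue eigenvalues, and absolute $\bQ$-constructibility from $M_B^s$ being Zariski open defined over $\bQ$; part (2) is then Lemma \ref{lemSj} plus part (1). This matches the paper line for line.

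For part (3), however, there is a genuine gap, and you have in fact flagged it yourself without closing it. The paper proves (3) by deferring to Theorem \ref{propB1Ms} (= Theorem \ref{propA1si}), whose irreducible-component clause is Lemma \ref{lemirredcom} — the most delicate argument in Section 5. Your sketch identifies the right ingredients (Proposition \ref{propGood} and Proposition \ref{propBia}) but the pivotal step — showing that the (typically countably many) analytic irreducible components $T_h$ of $RH_0^{-1}(S_i)\subset M_{\bar X/D}$ ``match collectively'' with the single component $S_i$, in a way that is preserved by $p_\sigma$ — is exactly what you do not supply. The difficulty is that $p_\sigma$ permutes the $T_h$ individually, but a priori it could send two components whose $RH_0$-images are dense in the same $S_i$ to components whose images are dense in \emph{different} components of $M_B^s(X^\sigma,r)(\bC)$. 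The paper resolves this by (i) proving that singular loci are compatible with $\sigma$-conjugation (Lemma \ref{lemSinga}), reducing the problem to connected components of the smooth locus of $S$, and (ii) the key observation that $p_\sigma$ carries fibers of $RH_0$ to fibers of $RH_0$ (Lemma \ref{lemFib}(1)), so that a chain $RH_0(T_{h_1}),\dots,RH_0(T_{h_s})$ of overlapping opens inside one connected component of $S_{sm}$ is sent to a chain of overlapping opens $RH_0(p_\sigma(T_{h_1})),\dots,RH_0(p_\sigma(T_{h_s}))$ on the $X^\sigma$-side. Neither the singular-locus reduction nor the fiber-preservation argument appears in your proposal, and without them the irreducibility of $S_i^\sigma$ is not established. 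Note also that your invocation of Proposition \ref{propBia} applied to $M_B^s$ itself only yields a triviality (the analytic components of $RH^{-1}(M_B^s)=M_{DR}^s$ coincide with its Zariski components, which are finitely many and already algebraic since $M_{DR}^s$ is Zariski open in a finite-type scheme), so it carries no force for the transfer you want; what would carry force is applying \ref{propBia} to $S_i$, but that presupposes the conclusion.
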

\begin{proof} (1) The Riemann-Hilbert correspondence between regular holonomic $\sD_X$-modules and perverse sheaves on $X$ preserves simplicity. This implies the first equality. The second equality follows from the first together with Lemma \ref{lemFib} (2) since $M_{DR}^s(\bar{X}/D,r)(\bC)=RH^{-1}(M^s_B(X,r)(\bC))$. The second equality implies the third since $p_\sigma$ preserves the good property for logarithmic connections.

The last claim follows from the first equality since  $M_B^s(X^\sigma,r)(\bC)$ is  a Zariski open subset of $M_B(X^\sigma,r)(\bC)$ defined over $\bQ$.

(2) By Lemma \ref{lemSj}, $M_B^s(X,r)(\bC)$ lies in the image of the analytic $RH$ map.

(3) This will follow from Theorem \ref{propB1Ms} below.
\end{proof}

\begin{rmk}
Since there is no difference between moduli-absolute and absolute subsets of $M_B^s(X,r)(\bC)$, we  stop using the terminology ``moduli-absolute" for such sets. 
\end{rmk}

For the rest of this subsection the goal is to prove Theorem \ref{propA1si}. Before we can give the proof, we need a few lemmas. Denote for simplicity for the rest of this section $$M_{\overline{X}/D}\coloneqq  M_{DR}^{good}(\overline{X}/D, r)(\bC).$$ Recall that the restriction of the analytic $RH$ map $$ RH_0\colon M_{\overline{X}/D}\twoheadrightarrow M_B^s(X, r)(\bC)$$
is a surjective analytic local isomorphism by Proposition \ref{propGood}.

\begin{lemma}\label{lemRH0}
	
	Let $S $  be a Zariski constructible subset of $ M_B^s(X,r)(\bC)$. Let $\overline{S}$ be the Zariski closure (or equivalently, the Euclidean closure) of $S$ in $M_B^s(X,r)(\bC)$. Let $A=RH^{-1}(S)$ be the inverse image, and  $\overline{A}$ be the Euclidean closure  of $A$ in $M_{DR}(\overline{X}/D, r)(\bC)$. Then $RH_0^{-1}(\overline{S})=\overline{A}\cap M_{\overline{X}/D}$.
\end{lemma}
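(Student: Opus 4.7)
The plan is to prove both inclusions using two ingredients: (a) for a Zariski constructible subset of a complex algebraic variety, the Zariski closure and the Euclidean closure coincide — so the notation $\overline{S}$ is unambiguous and can be used as either; and (b) by Proposition \ref{propGood}, the restriction $RH_0 = RH|_{M_{\overline{X}/D}} \colon M_{\overline{X}/D}\twoheadrightarrow M_B^s(X,r)(\bC)$ is a surjective local analytic isomorphism. Note that since $\overline{S}\subseteq M_B^s(X,r)(\bC)$, the set $RH_0^{-1}(\overline{S})$ is automatically contained in $M_{\overline{X}/D}$, so the two sides of the asserted equality live in $M_{\overline{X}/D}$.

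For the inclusion $\overline{A}\cap M_{\overline{X}/D}\subseteq RH_0^{-1}(\overline{S})$, I would take a point $x$ on the left-hand side, choose a sequence $x_n\in A$ with $x_n\to x$ in the Euclidean topology, and use continuity of the analytic map $RH$ to conclude that $RH(x_n)=RH_0(x_n)\in S$ converges to $RH(x)=RH_0(x)$. Thus $RH_0(x)$ lies in the Euclidean closure of $S$, which by (a) equals $\overline{S}$. This gives $x\in RH_0^{-1}(\overline{S})$.

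For the reverse inclusion $RH_0^{-1}(\overline{S})\subseteq \overline{A}\cap M_{\overline{X}/D}$, I would take $y\in RH_0^{-1}(\overline{S})$ and pick, using (a), a sequence $s_n\in S$ with $s_n\to RH_0(y)$ in the Euclidean topology. By (b) there are open neighbourhoods $U$ of $y$ in $M_{\overline{X}/D}$ and $V$ of $RH_0(y)$ in $M_B^s(X,r)(\bC)$ such that $RH_0$ restricts to an analytic isomorphism $U\xrightarrow{\sim} V$. For $n$ large enough $s_n\in V$, so I can lift to $y_n\colonequals (RH_0|_U)^{-1}(s_n)\in U$. Continuity of $(RH_0|_U)^{-1}$ gives $y_n\to y$, while $RH(y_n)=s_n\in S$ gives $y_n\in RH^{-1}(S)=A$; hence $y\in \overline{A}$, and combined with $y\in M_{\overline{X}/D}$ this completes the proof. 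No real obstacle is expected; the only subtlety to watch is that $A$ may be strictly larger than $A\cap M_{\overline{X}/D}$, but the lifting construction lands automatically in $M_{\overline{X}/D}$, so using $RH_0$ instead of the full $RH$ when producing the lifts avoids any issue.
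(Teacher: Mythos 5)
Your proof is correct and uses the same key ingredient as the paper, namely that $RH_0$ is a surjective local analytic isomorphism (Proposition \ref{propGood}); the paper proves the harder inclusion by contradiction (assuming a point $P\in RH_0^{-1}(\overline{S})\setminus(\overline{A}\cap M_{\overline{X}/D})$ and using a neighborhood disjoint from $\overline{A}$ mapping onto a neighborhood of $RH(P)$ disjoint from $S$, contradicting $RH(P)\in\overline{S}$), whereas you argue directly by lifting a sequence; these are minor stylistic variants of the same idea. One small notational glitch: in the first inclusion you write $RH(x_n)=RH_0(x_n)$, but $x_n\in A$ need not lie in $M_{\overline{X}/D}$, so $RH_0(x_n)$ may be undefined — you should simply use $RH(x_n)\in S$ throughout and invoke $RH_0$ only at the limit point $x\in M_{\overline{X}/D}$; this does not affect the validity of the argument.
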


\begin{proof}
	The inverse image $RH^{-1}(\overline{S})$ of $\overline{S}$ is an analytic closed  set containing $A$, so $\overline{A}\subset RH^{-1}(\overline{S})$ and  $$\overline{A}\cap M_{\overline{X}/D}\subset RH^{-1}(\overline{S})\cap M_{\overline{X}/D}=RH_0^{-1}(\overline{S}).$$
	In particular,  $S\subset  RH(\overline{A}\cap M_{\overline{X}/D})\subset \overline{S}$.
	If $RH_0^{-1}(\overline{S})\neq \overline{A}\cap M_{\overline{X}/D}$, then we can find a point $P\in RH_0^{-1}(\overline{S}) \setminus (\overline{A}\cap M_{\overline{X}/D})$. Put $Q=RH(P)$, and then $Q\in \overline{S}\setminus S$. Since $RH_0$ is a surjective analytic local  isomorphism, there exists an open neighborhood $U_P\subset M_{\overline{X}/D}$ of $P$ with respect to the Euclidean topology,  such that $U_P\cap (\overline {A}\cap M_{\overline{X}/D})=\emptyset$ and $RH_0\colon U_P \to V_Q=RH_0(U_P)$ is an analytic isomorphism.
	Since $U_P\cap (\overline {A}\cap M_{\overline{X}/D})=\emptyset$, we have $V_Q\cap S=\emptyset$. This is impossible, because $Q$ is a point in $\overline{S}$.  Hence, $RH_0^{-1}(\overline S)=\overline A\cap M_{\overline{X}/D}$. 
\end{proof}

Next, recall that an {\it analytically constructible subset} of an analytic space is an element of the smallest family of subsets  that contains all analytic subsets and is closed with respect to the operations of taking the finite union of sets and the complement of a set. Since every analytic space can be represented uniquely as a union of a locally finite family of irreducible analytic subspaces, the family of analytic irreducible components of an analytically constructible set is also locally finite.

\begin{lemma}\label{lemclosureofdecom}
	Let $A$ be an analytically constructible subset of an analytic space $M$, and let $A= \cup_{j\in J} A_j$ be the decomposition into analytic irreducible components. Then the Euclidean closure $\overline{A}$ of $A$  satisfies $$\overline{A} = \cup_{j\in J} \overline{A_j},$$
	where $\overline{A_j}$ is the Euclidean closure of $A_j$ for every $j\in J$.
\end{lemma}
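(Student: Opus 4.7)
The plan is to prove the two inclusions separately, with the non-trivial direction relying crucially on the local finiteness of the family $\{A_j\}_{j\in J}$ recalled in the paragraph preceding the lemma.

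The inclusion $\cup_{j\in J}\overline{A_j}\subseteq \overline A$ is immediate: each $A_j$ is contained in $A$, so $\overline{A_j}\subseteq \overline A$, and taking the union over $j$ preserves the inclusion. No hypothesis on the $A_j$ is needed here.

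For the reverse inclusion $\overline A\subseteq \cup_{j\in J}\overline{A_j}$, fix a point $x\in\overline A$. By the local finiteness of the decomposition of $A$ into analytic irreducible components, there exists an open neighborhood $U\subset M$ of $x$ in the Euclidean topology such that the set $J_U:=\{j\in J\mid A_j\cap U\neq\emptyset\}$ is finite. Then $A\cap U=\bigcup_{j\in J_U}(A_j\cap U)$ is a \emph{finite} union. Since closure commutes with finite unions, we obtain
\[
\overline{A\cap U}\;=\;\bigcup_{j\in J_U}\overline{A_j\cap U}\;\subseteq\;\bigcup_{j\in J_U}\overline{A_j}\;\subseteq\;\bigcup_{j\in J}\overline{A_j}.
\]
It remains to check that $x\in\overline{A\cap U}$. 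For any open neighborhood $V$ of $x$, the intersection $V\cap U$ is also an open neighborhood of $x$, hence meets $A$ because $x\in \overline A$; but $V\cap U\cap A=V\cap(A\cap U)$, so $V$ meets $A\cap U$. This shows $x\in\overline{A\cap U}$, and combining with the previous display gives $x\in\bigcup_{j\in J}\overline{A_j}$, as desired.

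I do not anticipate a genuine obstacle here: the only subtle point is the step where one passes from an arbitrary union to a finite one, which is precisely what local finiteness is designed to allow. Without local finiteness, the inclusion $\overline{\cup_j A_j}\subseteq \cup_j\overline{A_j}$ can fail in general, so it is essential to invoke the recalled fact that an analytically constructible set has only locally finitely many analytic irreducible components.
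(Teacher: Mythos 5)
Your proof is correct and relies on the same essential ingredient as the paper's: local finiteness of the family of analytic irreducible components of an analytically constructible set. The forward inclusion is identical. For the reverse inclusion, the paper argues globally: it observes that $\cup_{j\in J}\overline{A_j}$ is a locally finite union of analytic subsets, hence itself analytic and therefore Euclidean closed; since it contains $A$, it contains $\overline{A}$. You instead argue pointwise: fix $x\in\overline A$, shrink to a neighborhood $U$ meeting only finitely many $A_j$, and use that closure commutes with finite unions together with the elementary observation that $x\in\overline{A\cap U}$. In effect you are unwinding by hand the standard fact that a locally finite union of closed sets is closed, which is what the paper invokes (via ``analytic implies closed''). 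Your version is marginally longer but also more elementary, since it avoids appealing to the analyticity of a locally finite union of analytic sets; both are perfectly valid, and the difference is one of exposition rather than substance.
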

\begin{proof}
	The set $\overline{A}$ is an Euclidean closed set containing every  $A_j$,   so $\overline{A_j}\subset \overline{A}$ for all $j\in J$. Thus $\cup_{j\in J}\overline{A_j}\subset \overline{A}$. 
	Since $A$ is an analytically constructible subset  of $M$, the union $\cup_{j\in J}\overline{A_j}$  is  locally a finite union, and hence it is analytic. Thus the set $\cup_{j\in J}\overline{A_j}$ is an  Euclidean closed subset containing $A$ and so $\overline{A}\subset \cup_{j\in J}\overline{A_j} $. Therefore, $\overline{A}= \cup_{j\in J}\overline{A_j}$ as we expected.
\end{proof}

\begin{lemma}\label{lemAICsigma} Let  $S $  be an absolute $\overline{\bQ}$-constructible subset of $ M_B^s(X,r)(\bC)$. Let $\sigma\in \text{Gal}(\bC/\bQ)$ and $S^\sigma = RH\circ p_\sigma \circ RH^{-1}(S).$   Let 
$$A=RH^{-1}(S)=\cup_{j\in J}A_j \quad \text{and}\quad B=RH^{-1}(S^\sigma)=\cup_{k\in K}B_{k}
$$
be the decompositions into analytic irreducible components in $M_{DR}(\overline{X}/D, r)(\bC)$.  Let $\overline{A_j}$, $\overline{B_k}$ be the Euclidean closures in $M_{DR}(\overline{X}/D, r)(\bC)$ and $M_{DR}(\overline{X}^\sigma/D^\sigma, r)(\bC)$, respectively. Then:

(1) For every $j\in J$, we have that $$p_\sigma(\overline{A_j})=\overline{B_{k(j)}},$$ for some $k(j)\in K$ depending only on $j$.

(2) The map $p_\sigma$ induces a bijection between the set $\{\overline{A_j}\}$ of Euclidean closures of analytic irreducible components of $A$ and the set $\{\overline{B_k}\}$ of Euclidean closures of analytic irreducible components of $B$.  

(3) If $\overline{A}$, $\overline{B}$ denote the Euclidean closures, then $\overline{B}= p_\sigma(\overline{A})$.
\end{lemma}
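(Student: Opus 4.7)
The plan is to prove (1) via a symmetric Galois argument leveraging Proposition \ref{propBia} for both $S$ and $S^\sigma$; (2) then follows immediately from the symmetry, and (3) is obtained by combining (2) with Lemma \ref{lemclosureofdecom}. For preliminaries, since $S$ is absolute $\overline{\bQ}$-constructible, so is $S^\sigma$ (using $(S^\sigma)^\tau=S^{\tau\sigma}$), and both lie in the image of the analytic $RH$ map by Lemma \ref{lemSj}; thus both are moduli-absolute $\overline{\bQ}$-constructible by Proposition \ref{propBiAlg}. Applying Proposition \ref{propBia}, every $\overline{A_j}$ and every $\overline{B_k}$ is Zariski closed in its respective de Rham moduli. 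I then verify each is Zariski irreducible: a decomposition $\overline{A_j}=Z_1\cup Z_2$ into proper Zariski closed subsets would force $A_j\subset Z_1$ by the analytic irreducibility of $A_j$, contradicting the Euclidean density of $A_j$ in $\overline{A_j}$. The map $p_\sigma$, being induced by a scheme isomorphism via base change along $\sigma$, is a Zariski homeomorphism, so $p_\sigma(\overline{A_j})$ is also Zariski closed and Zariski irreducible.

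For the matching argument, $p_\sigma(A_j)\subset p_\sigma(A)=B$ by Lemma \ref{lemFib}(2), and $p_\sigma(\overline{A_j})$ is the Zariski closure of $p_\sigma(A_j)$ (using that $p_\sigma$ is a Zariski homeomorphism and that $\overline{A_j}$ equals the Zariski closure of $A_j$ by Proposition \ref{propBia}). A Zariski irreducible complex algebraic variety is globally analytically irreducible, so $p_\sigma(\overline{A_j})$ is an analytically irreducible ambient variety containing the Zariski-dense subset $p_\sigma(A_j)\subset B=\bigcup_k B_k$. Inside such an analytically irreducible variety, a Zariski-dense analytically constructible subset has a unique analytic irreducible component whose closure equals the whole variety; this component is contained in a single $B_{k(j)}$, yielding $p_\sigma(\overline{A_j})\subset \overline{B_{k(j)}}$. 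Applying the mirror argument with $p_{\sigma^{-1}}$ in place of $p_\sigma$ and $\overline{B_{k(j)}}$ in place of $\overline{A_j}$ gives the reverse inclusion, and hence equality $p_\sigma(\overline{A_j})=\overline{B_{k(j)}}$; this is (1). Part (2) is immediate since the symmetric construction yields an inverse bijection $k\mapsto j(k)$. Part (3) follows from Lemma \ref{lemclosureofdecom} applied twice and the bijection: $\overline B=\bigcup_k\overline{B_k}=\bigcup_j p_\sigma(\overline{A_j})=p_\sigma\bigl(\bigcup_j\overline{A_j}\bigr)=p_\sigma(\overline A)$.

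The principal obstacle is that $p_\sigma$ is not continuous for the Euclidean topology, so Euclidean closures of analytic irreducible components cannot be directly transported across $p_\sigma$. Proposition \ref{propBia} is the decisive enabling input that circumvents this: it collapses Euclidean and Zariski closures for our (moduli-)absolute constructible sets, allowing the entire argument to run at the algebraic level where $p_\sigma$ is well-behaved as a Zariski homeomorphism. A secondary subtlety is that the analytic decompositions may be countably infinite (Remark \ref{propBia2}), so the Zariski irreducibility of each closure is essential in the matching step in order to single out a unique $k(j)$ among the many analytic components that could in principle meet $p_\sigma(A_j)$.
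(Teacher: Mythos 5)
Your global strategy mirrors the paper's (use Proposition~\ref{propBia} to make the Euclidean closures $\overline{A_j}$, $\overline{B_k}$ Zariski closed, then transport them via the Zariski homeomorphism $p_\sigma$, then symmetrize). The preliminaries and Parts~(2),~(3) are fine. But the key matching step for Part~(1) — producing $k(j)$ with $p_\sigma(\overline{A_j})\subset\overline{B_{k(j)}}$ — has a genuine gap.

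You argue on the $\sigma$-side: $p_\sigma(\overline{A_j})$ is Zariski irreducible, hence analytically irreducible, and contains the Zariski-dense subset $p_\sigma(A_j)\subset B$; you then invoke the principle that ``a Zariski-dense analytically constructible subset \emph{has a unique analytic irreducible component whose closure equals the whole variety}.'' This principle is false as a general statement. For instance $\bZ\subset\bC$ is analytically constructible (a locally finite discrete set), Zariski dense in the irreducible variety $\bC$, yet each of its components is a single point and none has Euclidean closure $\bC$. The principle is only valid for \emph{Euclidean}-dense analytically constructible subsets, and you have no Euclidean density: $p_\sigma$ is a Zariski homeomorphism but is not continuous for the Euclidean topology, so the Euclidean density of $A_j$ in $\overline{A_j}$ does \emph{not} transfer to $p_\sigma(A_j)$ in $p_\sigma(\overline{A_j})$. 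For the same reason $p_\sigma(A_j)$ is not obviously analytically constructible at all, so the principle is not even applicable to it. In short, the argument implicitly needs $p_\sigma(\overline{A_j})\subset\overline{B}$ (equivalently, Euclidean density of the constructible piece inside $p_\sigma(\overline{A_j})$), which is precisely what is being proven, so the reasoning is circular.

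The paper avoids this by running the component-matching on the \emph{source} side, before applying $p_\sigma$: from $A=p_\sigma^{-1}(B)\subset\cup_k p_\sigma^{-1}(\overline{B_k})$, where each $p_\sigma^{-1}(\overline{B_k})$ is Zariski (hence Euclidean) closed, and $A_j$ is analytically irreducible, one gets $A_j\subset p_\sigma^{-1}(\overline{B_{k(j)}})$ for a single $k(j)$ (a Baire-type argument for a countable cover of the irreducible analytic space $A_j$ by closed analytic subsets). Taking Euclidean closure and then $p_\sigma$ gives $p_\sigma(\overline{A_j})\subset\overline{B_{k(j)}}$ without ever needing Euclidean density after $p_\sigma$. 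Once this inclusion is in hand, the symmetric step and the conclusion that $\ell(k(j))=j$ (to upgrade to equality) go through as you outlined. You should replace your downstream density argument with this upstream Baire argument on $A_j$.
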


\begin{proof} The set $S^\sigma$ is also an absolute $\overline{\bQ}$-constructible subset of $M_B(X^\sigma, r)(\bC)$, by Proposition \ref{propBiAlg}. By the absoluteness of $S$, $S^\sigma$, and Proposition \ref{propBia}, $\overline{A_j}$ and $\overline{B_k}$ are irreducible Zariski closed sets for every $j\in J$, $k\in K$. The map $p_\sigma$ and its inverse $p_\sigma^{-1}=p_{\sigma^{-1}}$ preserve Zariski closed subsets and irreducibility with respect to the Zariski topology. 
In particular, $p_\sigma(\overline{A_j})$, $p_\sigma^{-1}(\overline{B_k})$ are irreducible Zariski closed sets  for every $j\in J$, $k\in K$.  Since $S^\sigma$ is also absolute,  (2) follows from (1) by interchanging the roles of $S$ and $S^\sigma$. Then (3) follows from (2) by applying Lemma \ref{lemclosureofdecom}. Thus it remains to prove (1).

By Lemma \ref{lemFib},  $B=p_\sigma(A)$, so the following holds
	$$\cup_{j\in J} A_j=
	A=p_\sigma^{-1}(B)\subset p_\sigma^{-1}(\overline{B})=\cup_{k\in K} p_\sigma^{-1}(\overline{B_k}).$$
The component $A_j$ is analytic irreducible, so $A_j\subset p_\sigma^{-1}(\overline{B_{k(j)}})$ for some $k(j)\in K$ depending only on $j$, and for all $j\in J$.
	Fix  $j\in J$.  As $p_\sigma^{-1}(\overline{B_{k(j)}})$ is a  Zariski closed set containing $A_j$, we get that $$\overline{A_j}\subset 	p_\sigma^{-1}(\overline{B_{k(j)}}).$$
	Acting on both sides by $p_\sigma$ thus gives \begin{equation}\label{eq1.2}  p_\sigma(\overline{A_j})\subset 	p_\sigma\circ p_\sigma^{-1}(\overline{B_{k(j)}})=\overline{B_{k(j)}}.\end{equation}

	We also show that $p_\sigma^{-1}(\overline{B_{k(j)}})\subset \overline{A_j}$ so that  $p_\sigma^{-1}(\overline{B_{k(j)}})=\overline{A_j}$.
	Denote $k= k(j)$ for simplicity. Similarly as before, there exists  $\ell(k)\in J$ depending only on $k$ such that  $p_\sigma^{-1}(\overline{B_{k}})=p_{\sigma^{-1}}(\overline{B_{k}})\subset \overline{A_{\ell(k)}}. $
	Together with (\ref{eq1.2})  this  implies that $$\overline{A_j}=p_\sigma^{-1}\circ p_\sigma (\overline{A_j})\subset  p_\sigma^{-1}(\overline{B_{k}})\subset \overline{A_{\ell(k)}}.$$
	As $\overline{A_j}$ and $\overline{A_{\ell(k)}}$  are the Euclidean closures in $M_{DR}(X/D, r)(\bC)$ of two  analytic irreducible components $A_j$ respectively $A_{\ell(k)}$ of $A$,  we have that $A_j=A\cap \overline{A_j}\subset A\cap \overline{A_{\ell(k)}}=A_{\ell(k)}.$
	The uniqueness of decomposition into  irreducible components thus tells us that $A_j=A_{\ell(k)}$, and so $\overline{A_j}\subset p_\sigma^{-1}(\overline{B_{k(j)}})=p_\sigma^{-1}(\overline{B_{k}}) \subset \overline{A_j}.$
	It further implies, by taking image via $p_\sigma$,  that $p_\sigma(\overline{A_j})\subset \overline{B_{k(j)}}\subset  p_\sigma(\overline{A_j}).$
	Therefore,   $  p_\sigma(\overline{A_j})=\overline{B_{k(j)}}.$ As an immediate consequence, we also have an induced bijection between $J$ and $K$, as claimed. 
\end{proof}

\begin{lemma}\label{corclosure}
	The Euclidean (or equivalently, the Zariski) closure $\overline{S}$ in $M_B^s(X,r)(\bC)$ of an absolute $\bar{\bQ}$-constructible subset $S$ is an absolute $\bar{\bQ}$-closed subset in $M_B^s(X,r)(\bC)$.
\end{lemma}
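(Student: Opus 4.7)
The plan is to reduce the statement, for each $\sigma \in \mathrm{Gal}(\bC/\bQ)$, to the identity
$$(\overline{S})^\sigma = \overline{S^\sigma},$$
where both closures are taken in the respective simple Betti submoduli. Given this identity, absolute $\bar{\bQ}$-closedness of $\overline{S}$ in $M_B^s(X,r)(\bC)$ follows at once via Proposition \ref{lemAnlg}: $S^\sigma$ is $\bar{\bQ}$-constructible in $M_B^s(X^\sigma,r)(\bC)$ by the absoluteness of $S$ together with Proposition \ref{propZ2}(1), so its Zariski closure $\overline{S^\sigma}$ is a finite union of $\bar{\bQ}$-closed subsets of $M_B^s(X^\sigma,r)(\bC)$, for every $\sigma$.

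To establish the identity, I would transfer the problem to the de Rham side via the surjective analytic Riemann-Hilbert map $RH_0 \colon M_{\overline{X}/D} \twoheadrightarrow M_B^s(X,r)(\bC)$ and its base-changed counterpart. Using Proposition \ref{propZ2}(2) and Lemma \ref{lemSame}, one has $(\overline{S})^\sigma = RH_0(p_\sigma(RH_0^{-1}(\overline{S})))$ and analogously for $S^\sigma$. Setting $A \colonequals RH_0^{-1}(S)$, Lemma \ref{lemRH0} yields
$$RH_0^{-1}(\overline{S}) = \overline{A} \cap M_{\overline{X}/D}, \qquad RH_0^{-1}(\overline{S^\sigma}) = \overline{p_\sigma(A)} \cap M_{\overline{X}^\sigma/D^\sigma},$$
the bars denoting Euclidean closures in the ambient de Rham moduli. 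Combining with $p_\sigma(M_{\overline{X}/D}) = M_{\overline{X}^\sigma/D^\sigma}$ from Proposition \ref{propZ2}(1), the identity $(\overline{S})^\sigma = \overline{S^\sigma}$ reduces to the single assertion $p_\sigma(\overline{A}) = \overline{p_\sigma(A)}$, which is exactly Lemma \ref{lemAICsigma}(3), applicable because $S$ is absolute $\bar{\bQ}$-constructible in $M_B^s(X,r)(\bC)$.

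The conceptual obstacle, which is handled by the preparatory lemmas rather than by any new argument here, is that $p_\sigma$ is merely a Zariski homeomorphism and need not commute with the Euclidean closure of an arbitrary subset of $M_{\overline{X}/D}$. Absoluteness of $S$ forces, via Proposition \ref{propBia}, the Euclidean closure of each analytic irreducible component of $A$ to be Zariski closed; combined with Lemma \ref{lemclosureofdecom}, which expresses $\overline{A}$ as a locally finite union of such closures, this algebraization is precisely what allows $p_\sigma$ to be pulled inside the Euclidean closure. Thus the proof becomes a formal deduction from the already-established machinery, with no genuinely new step required.
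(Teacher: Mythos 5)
Your proposal is correct and follows the same route as the paper: reduce to $(\overline{S})^\sigma = \overline{S^\sigma}$, pull back through the local isomorphism $RH_0$, and invoke Lemma \ref{lemRH0} together with Lemma \ref{lemAICsigma}(3) (which in turn rests on Proposition \ref{propBia} and Lemma \ref{lemclosureofdecom}) to commute $p_\sigma$ with the Euclidean closure. One notational slip: you set $A := RH_0^{-1}(S)$, whereas Lemmas \ref{lemRH0} and \ref{lemAICsigma} are stated with $A := RH^{-1}(S)$ (the inverse image in all of $M_{DR}(\bar X/D,r)(\bC)$, not just in $M_{\bar X/D}$); with your $A$, the cited lemmas do not apply literally and one would need an extra short argument about which analytic components of $RH^{-1}(S)$ meet the good locus. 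Replacing $RH_0^{-1}(S)$ by $RH^{-1}(S)$ in your definition of $A$ makes the citations exact and the argument coincide with the paper's.
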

\begin{proof}	We keep the notation from Lemma \ref{lemRH0} and Lemma \ref{lemAICsigma}. Let $\sigma\in \text{Gal}(\bC/\bQ)$.
	By the previous lemma,  we have $\overline{B}=p_\sigma(\overline{ A})$ together with an identification of decompositions into analytic irreducible components $$\overline{B}=\cup_{k\in K}\overline{B_k}=\cup_{j\in J}p_\sigma(\overline{A_j})=p_\sigma(\overline{A}).$$
	By Lemma \ref{lemRH0}, 	$RH_0^{-1}(\overline{S})=\overline{A}\cap M_{\overline{X}/D}$.
Since  $RH_0$ is surjective, one also has \begin{equation}\label{eq1.1}S^\sigma= RH_0\circ p_\sigma\circ RH_0^{-1}(S).\end{equation}
By Lemma \ref{lemRH0} applied to $S^\sigma$, it then follows that  $$RH_0^{-1}(\overline{S^\sigma})= \overline{B}\cap M_{\overline{X}^\sigma/D^\sigma}=p_\sigma(\overline{A})\cap p_\sigma(M_{\overline{X}/D})=p_\sigma(RH_0^{-1}(\overline{S})).$$
	Together with the surjectivity of $RH_0$ it implies that  $\overline{S}$  is  an absolute $\overline{\bQ}$-closed subset of $M_B^s(X,r)(\bC)$.
\end{proof}

\begin{lemma}\label{lemwq}
Every absolute $\bbQ$-constructible subset $S$ in $M_B^s(X,r)(\bC)$ is a Boolean combination of absolute $\bbQ$-closed subsets in $M_B^s(X,r)(\bC)$.
\end{lemma}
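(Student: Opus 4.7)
The plan is to prove the lemma by Noetherian induction on the Zariski closure $\overline{S}$ of $S$ inside $M_B^s(X,r)(\bC)$, which is a Noetherian topological space since $M_B^s(X,r)$ is a $\bC$-scheme of finite type. The backbone of the argument is Lemma \ref{corclosure}, which supplies absolute $\bar{\bQ}$-closedness of the Zariski closure of any absolute $\bar{\bQ}$-constructible set, combined with the fact that Boolean operations preserve absolute $\bar{\bQ}$-constructibility (Lemma \ref{lemOps}, pulled over to the simple locus via Lemma \ref{lemLll2}(2)).

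For the inductive step, suppose the claim holds for every absolute $\bar{\bQ}$-constructible subset whose Zariski closure in $M_B^s(X,r)(\bC)$ is strictly smaller than $\overline{S}$. By Lemma \ref{corclosure}, $\overline{S}$ is absolute $\bar{\bQ}$-closed. Set $T = \overline{S} \setminus S$. As the difference of an absolute $\bar{\bQ}$-closed subset and an absolute $\bar{\bQ}$-constructible subset, $T$ is absolute $\bar{\bQ}$-constructible. If the induction hypothesis applies to $T$, we may write $T$ as a Boolean combination of absolute $\bar{\bQ}$-closed subsets, and then $S = \overline{S} \setminus T$ is visibly such a Boolean combination as well. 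The induction starts at $\overline{S} = \emptyset$, in which case $S = \emptyset$ is trivially a Boolean combination.

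The missing ingredient is that $\overline{T} \subsetneq \overline{S}$. This follows from the standard fact that any Zariski constructible subset of a Noetherian space contains a dense open subset of its own closure: writing $S$ as a finite union of locally closed sets and working irreducible component by irreducible component of $\overline{S}$, one extracts an open $U \subset \overline{S}$ with $U \subseteq S$ and $\overline{U} = \overline{S}$. Then $T \subseteq \overline{S} \setminus U$, which is a proper closed subset of $\overline{S}$, so $\overline{T} \subsetneq \overline{S}$ and the induction closes.

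The one subtlety I expect to require care is keeping track of the ambient space throughout: the closures, complements, and differences must be performed inside $M_B^s(X,r)(\bC)$, and the resulting sets must be absolute $\bar{\bQ}$-constructible or $\bar{\bQ}$-closed in $M_B^s(X,r)(\bC)$, not only in $M_B(X,r)(\bC)$. This is handled by Proposition \ref{propZ2}(1), which gives that $M_B^s(X,r)(\bC)$ is itself absolute $\bQ$-constructible in $M_B(X,r)(\bC)$, together with Lemma \ref{lemLll2}(2), which identifies absolute subsets of $M_B^s(X,r)(\bC)$ with absolute subsets of $M_B(X,r)(\bC)$ that happen to lie in $M_B^s(X,r)(\bC)$; this legalises invoking the Boolean operations from Lemma \ref{lemOps} and guarantees that the absolute $\bar{\bQ}$-closed pieces produced by Lemma \ref{corclosure} are closed in the correct ambient space. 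Beyond this bookkeeping, no serious obstacle is anticipated.
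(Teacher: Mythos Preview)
Your proof is correct and follows essentially the same approach as the paper: take the closure $\overline{S}$ (absolute $\bar{\bQ}$-closed by Lemma \ref{corclosure}), set $T=\overline{S}\setminus S$, observe $T$ is absolute $\bar{\bQ}$-constructible and strictly smaller, and induct. The paper organises the induction by $\dim S$ rather than by Noetherian induction on $\overline{S}$, and handles the base case $\dim S=0$ directly (a finite $\bar{\bQ}$-constructible set of points is already $\bar{\bQ}$-closed), but this is only a cosmetic difference.
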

\begin{proof}
 For a subset of dimension 0 this is clear since the assignment $P\mapsto P^\sigma=RH\circ p_\sigma\circ RH^{-1}(P)$ is a bijection $ M_B^s(X,r)(\bC)\to M_B^s(X^\sigma,r)(\bC)$. If $\dim S>0$ consider the complement $T$ of $S$ in $\overline{S}$. It is a Zariski constructible subset  defined over $\overline{\bQ}$ in $M_B^s(X,r)(\bC)$ and $T^\sigma= \overline{S^\sigma}\setminus S^\sigma.$ Thus the set $T^\sigma$ is  a Zariski constructible subset defined over $\bar{\bQ}$, so $T$ is absolute $\overline{\bQ}$-constructible in $M_B^s(X, r)(\bC)$. Moreover $\dim T < \dim \overline S=\dim S.$ By induction on dimension,  $T$ is a Boolean combination of absolute closed $\bbQ$-subsets in $M_B^s(X,r)(\bC)$. Hence the same holds for $S$.
\end{proof}

Denote by $Sing(\_)$ the singular locus of a  complex analytic space or of a $\bC$-scheme locally of finite type. We will need the following, essentially suggested to us by the referee:

\begin{lemma}\label{lemSinga}
Let $M$ be a $\bC$-scheme locally of finite type. Let $N\subset M(\bC)$ be the complement of a locally finite countable union of Zariski closed subsets. Let $\sigma\in Gal(\bC/\bQ)$ and let $p_\sigma:M\to M^\sigma$ be the morphism of schemes induced by base change along $\sigma$. Let $A\subset M(\bC)$ be an analytically constructible subset such that the Euclidean closure of every irreducible component of $A$ is Zariski closed. Let $\ol{A}$ be the Euclidean closure of $A$ in $M(\bC)$. Then $p_\sigma(Sing(\ol{A}\cap N))=Sing(p_\sigma(\ol{A}\cap N))$.
\end{lemma}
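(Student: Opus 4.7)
My plan is to decompose $\overline{A}$ via the hypothesis, compute its singular locus as an Euclidean-locally finite union of Zariski closed subsets, and transport the resulting formula across $p_\sigma$ using that $p_\sigma$ is an isomorphism of $\bQ$-schemes.

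First, decompose $A=\bigsqcup_{j\in J}A_j$ into analytic irreducible components; since $A$ is analytically constructible, the family $\{A_j\}$ is Euclidean-locally finite, and no $A_j$ is contained in another. By hypothesis each $\overline{A_j}$ is Zariski closed in $M$, and Lemma \ref{lemclosureofdecom} gives $\overline{A}=\bigcup_{j\in J}\overline{A_j}$, with the family of closures again Euclidean-locally finite (the closure of $A_j$ meets an Euclidean open $U$ iff $A_j$ does). Since $N$ is Euclidean open, $\overline{A}\cap N$ is an open analytic subspace of the reduced analytic space $\overline{A}$, hence $Sing(\overline{A}\cap N)=Sing(\overline{A})\cap N$. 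Applying the standard local formula for the singular locus of a finite union of reduced irreducible analytic subvarieties (none contained in another) around each point, and globalizing via the local finiteness, yields
\[Sing(\overline{A})=\bigcup_{j\in J}Sing(\overline{A_j})\;\cup\;\bigcup_{j\neq j'\in J}\bigl(\overline{A_j}\cap\overline{A_{j'}}\bigr).\]

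Next I would transport this across $p_\sigma$. Since $p_\sigma\colon M\to M^\sigma$ is the base change along $\sigma$, it is an isomorphism of $\bQ$-schemes and a Zariski homeomorphism that restricts to a $\bC$-scheme isomorphism on each reduced Zariski closed subscheme $\overline{A_j}$; hence $p_\sigma(Sing(\overline{A_j}))=Sing(p_\sigma(\overline{A_j}))$ and $p_\sigma$ commutes with finite intersections of Zariski closed subsets. Applying $p_\sigma$ to the formula for $Sing(\overline{A})\cap N$ and using bijectivity gives
\[p_\sigma\bigl(Sing(\overline{A}\cap N)\bigr)=\Big(\bigcup_{j}Sing(p_\sigma(\overline{A_j}))\;\cup\;\bigcup_{j\neq j'}p_\sigma(\overline{A_j})\cap p_\sigma(\overline{A_{j'}})\Big)\cap p_\sigma(N).\]
The analogous local-formula computation on $M^\sigma$ would then identify the right-hand side with $Sing(p_\sigma(\overline{A})\cap p_\sigma(N))=Sing(p_\sigma(\overline{A}\cap N))$, yielding the claim.

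The main obstacle is exactly this last identification. To run the local formula on $M^\sigma$ one needs $\{p_\sigma(\overline{A_j})\}$ to be Euclidean-locally finite in $M^\sigma(\bC)$ and $p_\sigma(N)$ to be Euclidean open---neither of which is automatic, because $p_\sigma$ is a Zariski homeomorphism but is not continuous for the Euclidean topology. I would handle this by invoking the hypotheses of the lemma symmetrically: in the application (Proposition \ref{propZ2} and its proof), the Euclidean-locally finite datum defining $N$ and the analytically constructible datum of $A$ both have canonical $\sigma$-conjugates on $M^\sigma$ carrying the same Euclidean local structure, so the same computation applies there. Alternatively, one can argue Zariski-locally on affine opens, where only finitely many components $\overline{A_j}$ appear and $p_\sigma$ is literally a $\bC$-scheme isomorphism, reducing everything to a finite-union computation.
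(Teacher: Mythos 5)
Your approach is the same as the paper's up to presentation. The paper argues locally: near each point $x\in\ol A\cap N$ the set $\ol A\cap N$ agrees with the finite union $Z(x)=\cup_{j\in J(x)}\ol{A_j}$, a $\bC$-scheme of finite type, so $x\in Sing(\ol A\cap N)$ iff $x\in Sing(Z(x))$; since the scheme-theoretic singular locus is preserved by conjugation, $p_\sigma(Sing(Z(x)))=Sing(p_\sigma(Z(x)))$, and one concludes by noting that $p_\sigma(\ol A\cap N)$ agrees with $\cup_{j\in J(x)}p_\sigma(\ol{A_j})$ near $p_\sigma(x)$. You instead compute $Sing(\ol A)$ globally via $\cup_j Sing(\ol{A_j})\cup\cup_{j\neq j'}\bigl(\ol{A_j}\cap\ol{A_{j'}}\bigr)$ and intersect with the Euclidean open $N$; that formula is correct and the two computations are equivalent. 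Both routes reach the same crux, which you identify accurately: one needs $p_\sigma(N)$ to be Euclidean open and $\{p_\sigma(\ol{A_j})\}$ to remain Euclidean locally finite, and neither follows abstractly since $p_\sigma$ is not Euclidean continuous. The paper folds this into its opening paragraph by asserting that $p_\sigma(\ol A\cap N)$ is a complex analytic space; that assertion is what quietly supplies the needed local finiteness on the $M^\sigma$ side, and it is borne out in the intended application through Proposition \ref{propZ2}, which makes $p_\sigma(N)$ the good locus of $M_{DR}(\bar X^\sigma/D^\sigma,r)$, hence Euclidean open, and through Lemma \ref{lemAICsigma}, which identifies $\{p_\sigma(\ol{A_j})\}$ with the locally finite family of closures of the analytic components of $RH^{-1}(S^\sigma)$. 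So your first remedy --- invoking the $\sigma$-conjugate data of the application --- is exactly what the paper does implicitly. Your proposed alternative of arguing Zariski-locally on affine opens does not work, though: the family $\{\ol{A_j}\}$ is Euclidean locally finite but in general not Zariski locally finite, so an affine Zariski open can still meet infinitely many $\ol{A_j}$, and the reduction to a finite union fails; the genuinely local step must be taken in the Euclidean topology, as in the paper.
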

\begin{proof}
Let us first check that both sides of the claimed equality are well-defined. 

If $A=\cup_{j\in J}A_j$ is the decomposition into analytic irreducible components, then $\ol{A}=\cup_{j\in J}\ol{A_j}$, where $\ol{A_j}$ is the Euclidean closure, by Lemma \ref{lemclosureofdecom}. By assumption $\ol{A_j}$ is  Zariski closed. Since $J$ is locally finite it follows that $\ol{A}\cap N$ is a complex analytic space and hence its singular locus is well-defined. 

Let $\cup_{k\in K}N_k$ be the complement of $N$, which by assumption is a locally finite countable union of Zariski closed subsets of $M(\bC)$. Then $\ol{A}\cap N =\cup_j\cap_k \ol{A_j}\setminus N_k$. Since $p_\sigma$ is a homeomorphism of Zariski topologies, it follows that $p_\sigma(\ol{A}\cap N)=\cup_j\cap_k p_\sigma(\ol{A_j})\setminus p_\sigma(N_k)$, with $p_\sigma(\ol{A_j})$ and $p_\sigma(N_k)$ Zariski closed. Thus $p_\sigma(\ol{A}\cap N)$ is also a complex analytic space and its singular locus is well-defined.

Let $x$ be a point of $\ol{A}\cap N$. Then there exists a finite subset $J(x)\subset J$ and a small Euclidean open neighborhood $B(x)$ of $x$ in $M(\bC)$ such that $\ol{A}\cap N\cap B(x)=\cup_{j\in J(x)}\ol{A_j}\cap B(x)$. Hence $x$ is a singular point of $\ol{A}\cap N$ if and only if it is a singular point of the $\bC$-scheme $Z(x)=\cup_{j\in J(x)}\ol{A_j}$. 
Since $M$ is locally of finite type, $Sing(Z(x))$ is Zariski closed. Since conjugation by $\sigma$ preserves algebraic singular loci, $p_\sigma(Sing(Z(x)))=Sing(p_\sigma(Z(x)))=Sing(\cup_{j\in J(x)}p_\sigma(\ol{A_j}))$. Moreover, $\cup_{j\in J(x)}p_\sigma(\ol{A_j})$ agrees with $p_\sigma(\ol{A}\cap N)$ on a small Euclidean open neighborhood of $p_\sigma(x)$. 
This implies the claimed equality.
\end{proof}

We will apply this to obtain:

\begin{lemma}\label{lemirredcom}
	If $S$ is absolute $\bbQ$-closed in $M_B^s(X,r)(\bC)$, then every Zariski irreducible component of $S$ is also absolute $\bbQ$-closed. 
\end{lemma}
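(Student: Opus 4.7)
The plan is to reduce to the pure-dimensional case via a local-dimension filtration, then identify the Zariski irreducible components of $S$ through the analytic irreducible components of the Riemann--Hilbert preimage, and finally to establish compatibility under $\sigma$-conjugation by exploiting the fiber structure of $RH_0$ together with the connectedness of smooth loci.

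For the reduction, both $RH_0$ (a local analytic isomorphism on $M_{\bar{X}/D}$) and $p_\sigma$ (a Zariski homeomorphism on $M_{DR}(\bar{X}/D,r)$) preserve dimensions of subsets, so $\sigma$-conjugation preserves the local-dimension function on points of $M_B^s(X,r)(\bC)$. By upper semi-continuity, $S_{\ge d'} := \{p \in S : \dim_p S \ge d'\}$ is Zariski closed and $(S_{\ge d'})^\sigma = (S^\sigma)_{\ge d'}$ is $\bbQ$-closed, so $S_{\ge d'}$ is absolute $\bbQ$-closed. By Lemma \ref{corclosure}, so is $S^{[d']} := \overline{S_{\ge d'} \setminus S_{\ge d'+1}}^{Zar}$, which is pure of dimension $d'$. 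Since the Zariski irreducible components of $S = \cup_{d'} S^{[d']}$ are partitioned among those of the $S^{[d']}$, we may assume $S$ is pure of dimension $d$. Set $A = RH_0^{-1}(S) = \overline{A} \cap M_{\bar{X}/D}$ by Lemma \ref{lemRH0}. Applying Lemma \ref{lemSinga} with $N = M_{\bar{X}/D}$, together with $Sing(A) = RH_0^{-1}(Sing(S))$ (since $RH_0$ is a local analytic iso), yields $(Sing(S))^\sigma = Sing(S^\sigma)$; thus $Sing(S)$ is absolute $\bbQ$-closed and $RH_0$ restricts to a surjective local analytic isomorphism $A^{sm} \to S^{sm} := S \setminus Sing(S)$.

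By Proposition \ref{propBia}, the Euclidean closures $\overline{A_j}$ of the analytic irreducible components $\{A_j\}$ of $A$ are Zariski-closed Zariski-irreducible of dimension $d$, and the $A_j^{sm} := A_j \cap A^{sm}$ are the Euclidean connected components of $A^{sm}$ (smoothness plus analytic irreducibility imply connectedness). Each $A_j^{sm}$ maps via $RH_0$ into a unique Euclidean connected component of $S^{sm}$, hence into a unique Zariski irreducible component $S_{\iota(j)}$ of $S$, in which it is Zariski dense. By Lemma \ref{lemAICsigma}, $p_\sigma$ induces a bijection $j \mapsto k(j)$ between $\{A_j\}$ and the corresponding set $\{B_k\}$ for $S^\sigma$, and the same on smooth loci; analogously define $\iota'$ using the Zariski irreducible components $\{T_l\}$ of $S^\sigma$.

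The main obstacle is the compatibility $\iota(j) = \iota(j') \Leftrightarrow \iota'(k(j)) = \iota'(k(j'))$. If $j, j' \in J_i := \iota^{-1}(i)$ share a common $RH_0$-fiber in the smooth locus---i.e., there exist $a \in A_j^{sm}$, $a' \in A_{j'}^{sm}$ with $RH_0(a) = RH_0(a') \in S_i^{sm}$---then by Lemma \ref{lemFib} the images $p_\sigma(a) \in B_{k(j)}^{sm}$ and $p_\sigma(a') \in B_{k(j')}^{sm}$ lie in a common $RH$-fiber over a point $\sigma(p) \in (S^\sigma)^{sm}$; if $\iota'(k(j)) \neq \iota'(k(j'))$, then $\sigma(p) \in T_{\iota'(k(j))} \cap T_{\iota'(k(j'))} \subset Sing(S^\sigma)$, contradicting smoothness of $\sigma(p)$. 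For general $j, j' \in J_i$, the connectedness of $S_i^{sm}$ together with its open cover $\{RH_0(A_{j''}^{sm})\}_{j'' \in J_i}$ provides a chain in $J_i$ connecting $j$ to $j'$ by consecutive pairs sharing a fiber point; iterating yields $\iota'(k(j)) = \iota'(k(j'))$. Granting the compatibility, $\sigma(i) := \iota'(k(j))$ for any $j \in J_i$ is well-defined, and a direct lifting argument using Lemma \ref{lemFib} shows $(S_i)^\sigma = T_{\sigma(i)}$, a Zariski irreducible component of the $\bbQ$-closed set $S^\sigma$, hence $\bbQ$-closed in $M_B^s(X^\sigma,r)(\bC)$. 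Therefore each $S_i$ is absolute $\bbQ$-closed.
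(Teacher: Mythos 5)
Your proof is correct and follows essentially the same route as the paper: establish $(\mathrm{Sing}(S))^\sigma=\mathrm{Sing}(S^\sigma)$ via Lemma~\ref{lemSinga}, pass to the smooth locus where $RH_0$ is a surjective local analytic isomorphism, and use a covering/chain argument on the connected pieces (your $A_j^{sm}$ are exactly the connected components $T_h$ the paper works with after reducing to smooth $S$), with Lemma~\ref{lemFib} guaranteeing $p_\sigma$ respects fibers so that the chain transports. The one genuine difference is your opening reduction to the pure-dimensional case via the local-dimension filtration; this is harmless but unnecessary, since for any irreducible component $S_i$ the set $S_i\cap S^{sm}$ is automatically nonempty (an irreducible component is never contained in the union of the others, so it has smooth points of $S$) and connected, which is all the chain argument uses. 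The final ``direct lifting'' step deserves a line: $(S_i^{sm})^\sigma=T_{\sigma(i)}^{sm}$ follows from your compatibility plus the symmetric statement for $\sigma^{-1}$ (giving surjectivity of $J_i\to\{k:\iota'(k)=\sigma(i)\}$), and then $(S_i)^\sigma=T_{\sigma(i)}$ by taking Zariski closures as in Lemma~\ref{corclosure}.
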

\begin{proof}  Since $RH_0$ is an analytic local isomorphism, $Sing(RH_0^{-1}(S)) = RH_0^{-1}(Sing(S))$. By Lemma \ref{lemRH0} we have $RH_0^{-1}(S)=\ol{A}\cap M_{\overline{X}/D}$ with $A=RH^{-1}(S)$. We apply Lemma \ref{lemSinga} with $M=M_{DR}(\bar X/D,r)(\bC)$, $N=M_{\overline{X}/D}$. Recall that $M_{\overline{X}/D}$ is the complement of a locally finite countable union of Zariski closed subsets of $M_{DR}(\bar X/D,r)(\bC)$, and that 
 by Proposition \ref{propBia} the Euclidean closure of all analytic irreducible component of $A$ are Zariski closed. Hence Lemma \ref{lemSinga} can be applied. The consequence is that $p_\sigma(Sing(RH_0^{-1}(S))) = Sing(p_\sigma(RH_0^{-1}(S)))$ for all $\sigma\in Gal(\bC/\bQ)$. Since $Sing(p_\sigma(RH_0^{-1}(S)))=RH_0^{-1}(Sing(S^\sigma))$, we have obtained that $(Sing(S))^\sigma=Sing(S^\sigma)$ and so $Sing(S)$ is absolute $\bbQ$-locally closed in $M_B^s(X,r)(\bC)$. Thus the smooth locus $S_{sm}=S\setminus Sing(S)$ is absolute $\bbQ$-locally closed in $M_B^s(X,r)(\bC)$, and smoothness is an absolute notion, that is $S_{sm}^\sigma=(S^\sigma)_{sm}$. There is a bijection between the analytic  irreducible components of $S$ (and therefore Zariski irreducible components since $S$ is algebraic) and the  connected components of $S_{sm}$, see \cite[II.5.3]{De}. The bijection is given by taking the closure of the latter in $S$, or equivalently in $M_B^s(X,r)(\bC)$ since $S$ is closed. Hence it suffices to show that if $S$ is smooth absolute $\bbQ$-locally closed in $M_B^s(X,r)(\bC)$, then every connected component of $S$ is also absolute $\bbQ$-locally closed.

	Let $S$ be smooth absolute $\bbQ$-locally closed in $M_B^s(X,r)(\bC)$. Let $S_j$ with $j\in J$ be the connected components of $S$. Let $T_h$ with $h\in H$ be the connected components of $RH_0^{-1}(S)$. Using the almost-algebraicity of $T_h$ as above, we have that $p_\sigma(T_h)$ with $h\in H$ are the connected components of $RH_0^{-1}(S^\sigma)$.  Thus the restriction of $RH_0$ gives  analytic local isomorphisms $T_h\to S$ and $p_\sigma(T_h)\to S^\sigma$, in particular these are open maps in the Euclidean topology. Let $H(j)=\{h\in H\mid RH_0(T_h)\subset S_j\}$. Then $\cup_{h\in H(j)}RH_0(T_h)=S_j$. Moreover, $RH_0(p_\sigma(T_h))$ is an analytic open subset of $S^\sigma$. Thus $S_j^\sigma=\cup_{h\in H(j)}RH_0(p_\sigma(T_h))$ is an analytic open subset of $S^\sigma$ for every $\sigma$. 
	
	
	To conclude that $S_j$ is absolute $\bbQ$-locally closed, it is enough to show that $S_j^\sigma$ is also Zariski open. It is thus enough to prove that $S_j^{\sigma} := \cup_{h\in H(j)} RH_0(p_{\sigma}(T_h))$ is a Euclidean connected component of $S^\sigma$, since Zariski and Euclidean connectedness are equivalent properties. First, we prove it is connected. Pick two points $x,y\in S_j^{\sigma}$ and let $x',y'\in S_j$ be the corresponding points in $S$. By considering a covering of a path between $x',y'\in S_j$ by opens $RH_0(T_h)$, one can see that there is a sequence $h_1,\ldots, h_s\in H(j)$ such that $x'\in RH_0(T_{h_1})$, $y'\in RH_0(T_{h_s})$ and with $RH_0(T_{h_{l}})\cap RH_0(T_{h_{l+1}})\neq\emptyset$, for $l=1,\ldots, s-1$. Applying $p_\sigma$ and noting that $p_\sigma$ sends fibers of $RH_0$ to fibers of $RH_0$, we have that $x\in RH_0(p_{\sigma}(T_{h_1}))$, $y\in RH_0(p_{\sigma}(T_{h_s}))$ and $RH_0(p_{\sigma}(T_{h_{l}}))\cap RH_0(p_{\sigma}(T_{h_{l+1}}))\neq\emptyset$,  for $l=1,\ldots, s-1$. It follows that $x,y$ lie in the same connected component of $S^\sigma$. 
	
	On the other hand, if $S_j^{\sigma}$ would not be a connected component there would exist an index $j'\in J$, $j'\neq j$ and indices $h'\in H(j')$, $h\in H(j)$ such that $RH_0( p_{\sigma}(T_{h})) \cap RH_0( p_{\sigma}(T_{h'}))\neq \emptyset$. But then we would have $RH_0(T_{h}) \cap RH_0(T_{h'})\neq \emptyset$ which is a contradiction since $RH_0(T_{h})$ and $RH_0(T_{h'})$ lie in different connected components.
\end{proof}

\begin{thm}\label{propB1Ms}{\bf ($=$ Theorem \ref{propA1si}.)}
Conjecture A1 holds in $M_B^s(X,r)(\bC)$. That is: the Euclidean (or equivalently, the Zariski) closure in $M_B^s(X,r)(\bC)$ of an absolute $\bar{\bQ}$-constructible set is an absolute $\bar{\bQ}$-closed subset of $M_B^s(X,r)(\bC)$; the collection of absolute $\bar{\bQ}$-constructible subsets of $M_B^s(X,r)(\bC)$ is generated from the absolute $\bar{\bQ}$-closed subsets of $M_B^s(X,r)(\bC)$ via finite sequences of taking unions, intersections, and complements; and, every irreducible component of an absolute $\bbQ$-closed subset in $M_B^s(X,r)(\bC)$ is absolute $\bbQ$-closed.
\end{thm}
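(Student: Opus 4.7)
The plan is to assemble the theorem from the three preceding lemmas, which together already cover the three assertions in the statement. The overall strategy relies on the fact that, by Proposition \ref{propGood}, the restriction $RH_0$ of the Riemann–Hilbert map to the good locus $M_{\bar X/D}$ is a surjective analytic local isomorphism onto $M_B^s(X,r)(\bC)$; this lets us transfer topological/analytic information between the de Rham and Betti sides, while Proposition \ref{propBia} (bi-algebraicity) supplies the algebraicity needed to make the transfer Galois-equivariant.

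For the first assertion (closure of an absolute $\bbQ$-constructible set is absolute $\bbQ$-closed), I would simply invoke Lemma \ref{corclosure}. The key mechanism behind that lemma is the identity $RH_0^{-1}(\overline{S})=\overline{A}\cap M_{\bar X/D}$ from Lemma \ref{lemRH0}, combined with the fact that $p_\sigma$ sends Euclidean closures of analytic irreducible components of $RH^{-1}(S)$ bijectively to those of $RH^{-1}(S^\sigma)$ by Lemma \ref{lemAICsigma}. Together these give $RH_0^{-1}(\overline{S^\sigma}) = p_\sigma(RH_0^{-1}(\overline{S}))$, so taking $RH_0$ of both sides yields $(\overline S)^\sigma = \overline{S^\sigma}$, which is Zariski closed in $M_B^s(X^\sigma,r)(\bC)$.

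For the second assertion (Boolean generation), I would invoke Lemma \ref{lemwq}, whose proof proceeds by induction on dimension: the first assertion gives $\overline S$ absolute $\bbQ$-closed, and then $T = \overline S \setminus S$ is absolute $\bbQ$-constructible of strictly smaller dimension (the base case $\dim S = 0$ being immediate since $\sigma \mapsto P^\sigma$ is a bijection on points). Thus $S = \overline S \setminus T$ is expressed as a Boolean combination by induction.

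For the third assertion (irreducible components of absolute closed are absolute closed), I would invoke Lemma \ref{lemirredcom}. The main point — and, I expect, the subtlest one to verify carefully — is that passing to singular loci commutes with $p_\sigma$: the analytic local isomorphism $RH_0$ gives $RH_0^{-1}(Sing(S)) = Sing(RH_0^{-1}(S))$, and Lemma \ref{lemSinga} applied to $A = RH^{-1}(S)$ (whose analytic components have Zariski closed Euclidean closures by Proposition \ref{propBia}) lets us transport this identity through $p_\sigma$. This yields $(Sing(S))^\sigma = Sing(S^\sigma)$ and so reduces the claim to showing that connected components of a smooth absolute $\bbQ$-locally closed set are absolute $\bbQ$-locally closed — and this is checked by using the fiber-preserving property of $p_\sigma$ from Lemma \ref{lemFib} together with the path-connectedness argument through chains of open sets $RH_0(T_h)$. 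Concatenating the three lemmas then proves the theorem.
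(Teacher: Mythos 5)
Your proposal is correct and takes exactly the route the paper does: the theorem is obtained by concatenating Lemma \ref{corclosure}, Lemma \ref{lemwq}, and Lemma \ref{lemirredcom}, and your summaries of the mechanisms behind each (the identity $RH_0^{-1}(\overline{S})=\overline{A}\cap M_{\bar X/D}$, the induction on dimension, and the transport of singular loci through $p_\sigma$ via Lemma \ref{lemSinga}) accurately reflect the structure of those lemmas' proofs.
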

\begin{proof}
The claims were proved in Lemma \ref{corclosure}, Lemma \ref{lemwq}, and Lemma \ref{lemirredcom}.	
\end{proof}

\subsection{Monodromy at infinity and determinant}\label{subMinf} We keep the notation of this section, namely  $X$ is a smooth complex quasi-projective variety, $\bar{X}$ a smooth compactification, the complement $D$  is a simple normal crossing divisor. Let $D=\cup_{i=1}^s D_i$ be the irreducible decomposition.  Denote by $M_{DR}(\overline{X}/D,r)^\text{lf}(\bC)$ subset of $M_{DR}(\overline{X}/D,r)(\bC)$ consisting of polystable logarithmic connections such that each stable direct summand is a locally free $\cO_{\bar{X}}$-module. 
Then $M_{DR}(\overline{X}/D,r)^\text{lf}(\bC)$ is the set of closed points of a Zariski open subscheme of $M_{DR}(\overline{X}/D,r)$, by \cite[Proposition 5.21]{N2}. Consider the diagram:
\be\label{eqDlf}
\begin{gathered}
\begin{tikzcd}
{M_{DR}(\overline{X}/D,r)}^\text{lf}(\bC) \arrow[d, "RH"] \arrow[r, "res"] &  \mathbb{C}^{rs} \arrow[d, "f"] &  \mathbb{C}^{rs} \arrow[d, "Exp"] \arrow[l, "root"] \\
{M_B(X,r)(\bC)} \arrow[r, "mon"]                           &   (\mathbb{C}^{r-1}\times\bC^*)^{ s} = ((\bC^*)^{(r)})^s         &  (\mathbb{C}^*)^{ rs} \arrow[l, "root"]                  
\end{tikzcd}
\end{gathered}
\ee
defined as follows. Here $res = (res_1,\ldots, res_s)$ and  $res_i:M_{DR}(\overline{X}/D,r)^\text{lf}(\bC)\to \mathbb{C}^r$ is the algebraic map which associates to $(E,\nabla)$ the $r$-tuple of coefficients, excluding the top degree, of the characteristic polynomial of the residue homomorphism $\Gamma_i\in \Hom(E|_{D_i}, E|_{D_i})$. This is well-defined, since over a point of $D_i$ the residue homomorphism determines a conjugacy class in $\GL_r(\mathbb{C})$ which remains constant as the point varies. Similarly, $mon=(mon_1,\ldots, mon_s)$ and $mon_i: M_B(X,r)(\bC)\to \mathbb{C}^{r-1}\times\bC^*$ is the morphism defined over $\bbQ$ that associates to a representation $\rho:\pi_1(X,x_0)\to \GL_r(\mathbb{C})$ the coefficients of the characteristic polynomial of the matrix associated to a small loop around $D_i$. The constant term coefficient is nonzero. 
By identifying a monic polynomial with the unordered set of its roots, one identifies $\mathbb{C}^{r-1}\times\bC^*$ 
with  the $r$-th symmetric product $(\bC^*)^r/S_r$ denoted $(\bC^*)^{(r)}$.  Each of two maps $root$ is the product $s$ times of the quotient maps by the action of the symmetric group $S_r$. The quotient map is a finite map of algebraic varieties, given by the symmetric polynomials, such that the fiber over $(a_{0},\ldots, a_{r-1})$ is the set of  roots of the polynomial $t^r+a_{r-1}t^{r-1}+\ldots + a_0$ and their distinct permutations. The map $Exp$ takes $z$ to  $e^{2\pi iz}$ component-wise.   The map $f$ is the unique map that makes the diagram commutative.

\begin{lemma}\label{lemMinf} Let $M$ be one of the following:

(a) $M=M_B(X,r)$, in which case we also assume that $RH:M_{DR}(\overline{X}/D,r)^\text{lf}(\bC)\to M_B(X,r)(\bC)$ is surjective; or,

(b) $M=M_B^s(X,r)$, in which case $mon$ will denote the restriction $mon|_M$.

We have:

(1) If $S\subset M$ is an absolute $\bbQ$-constructible subset, then
$
root^{-1}(mon(S)) 
$ is generated from torsion-translated algebraic subtori of $(\bC^*)^{rs}$ via a finite sequence of taking unions, intersections, and complements.

(2) Conversely, let $S'\subset (\bC^*)^{rs}$ be a constructible subset generated from torsion-translated algebraic subtori of $(\bC^*)^{rs}$ via a finite sequence of taking unions, intersections, and complements. Then $mon^{-1}(root(S'))$ is an absolute $\bbQ$-constructible subset of $M$.
\end{lemma}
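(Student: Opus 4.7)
The plan is to reduce both parts to the rank~1 case on an auxiliary smooth complex algebraic variety $Y$ defined over $\bQ$ with $H_1(Y,\bZ)\cong \bZ^{rs}$ (for instance $Y=(\bC^*)^{rs}$), using the canonical identification $M_B(Y,1)(\bC)=(\bC^*)^{rs}$. Under this identification, Theorem~\ref{budurwang} characterises absolute $\bbQ$-closed subsets of $(\bC^*)^{rs}$ as finite unions of torsion-translated affine subtori, and the A1 part of the rank~1 case characterises absolute $\bbQ$-constructible subsets as their Boolean combinations. The whole argument then reduces to showing that the assignments $S\mapsto root^{-1}(mon(S))$ and $S'\mapsto mon^{-1}(root(S'))$ intertwine the Riemann--Hilbert Galois actions on $M$ and on $M_B(Y,1)(\bC)$.

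The key compatibility I would establish uses diagram~\eqref{eqDlf}. Given $L\in M$, lift to $(E,\nabla)\in M_{DR}(\bar X/D,r)^{\mathrm{lf}}(\bC)$---such a lift exists in case~(a) by the surjectivity hypothesis, and in case~(b) by Proposition~\ref{propGood} combined with the inclusion $M^{\mathrm{good}}_{DR}\subset M^{\mathrm{lf}}_{DR}$. If $\lambda_{ij}$ are the eigenvalues of the residue $\Gamma_i$ along $D_i$, then the monodromy eigenvalues of $L$ around $D_i$ are $\mu_{ij}=e^{2\pi i\lambda_{ij}}$, while $p_\sigma(E,\nabla)$ has residue eigenvalues $\sigma(\lambda_{ij})$, so $L^\sigma=RH(p_\sigma(E,\nabla))$ has monodromy eigenvalues $e^{2\pi i\sigma(\lambda_{ij})}$. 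An identical calculation via Deligne extensions on $Y$ computes the RH Galois action on $(\bC^*)^{rs}=M_B(Y,1)(\bC)$ as $\mu\mapsto e^{2\pi i\sigma(\lambda)}$ with $\mu=e^{2\pi i\lambda}$, which is well defined because $\sigma$ fixes $\bZ$. Combining these, I would deduce the intertwining identities
\begin{equation*}
\bigl(root^{-1}(mon(S))\bigr)^\sigma=root^{-1}(mon(S^\sigma))\quad\text{and}\quad \bigl(mon^{-1}(root(S'))\bigr)^\sigma=mon^{-1}\bigl(root((S')^\sigma)\bigr)
\end{equation*}
for every $\sigma\in Gal(\bC/\bQ)$, where in the first identity the superscript $\sigma$ on the left denotes the RH action on subsets of $M_B(Y,1)(\bC)$, and in the second it denotes the transport from subsets of $M$ to subsets of $M^\sigma$.

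Part~(1) follows at once: since $mon$ and $root$ are $\bQ$-morphisms of $\bQ$-schemes, $\bQ$-constructibility of $S^\sigma$ in $M^\sigma$ gives $\bQ$-constructibility of $root^{-1}(mon(S^\sigma))$ in $(\bC^*)^{rs}$, and the first intertwining identity then shows $root^{-1}(mon(S))$ is absolute $\bbQ$-constructible in $M_B(Y,1)(\bC)$; Theorem~\ref{budurwang} together with A1 in rank~1 writes it as a Boolean combination of torsion-translated subtori. Part~(2) runs backwards: $S'$ is absolute $\bbQ$-constructible in $M_B(Y,1)(\bC)$ by the same theorem, so every $(S')^\sigma$ is $\bQ$-constructible in $(\bC^*)^{rs}$, and the second intertwining identity together with the $\bQ$-algebraicity of $mon$ and $root$ yields that $(mon^{-1}(root(S')))^\sigma$ is $\bQ$-constructible in $M^\sigma$ for every $\sigma$.

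The delicate step will be the intertwining itself, where two sources of ambiguity must be shown to cancel out: the ordering ambiguity $S_r^s$ in enumerating eigenvalues, and the lattice ambiguity $\bZ^{rs}$ in choosing logarithms. The first is exactly what the finite morphism $root$ collapses, so pulling back along $root$ restores it harmlessly. The second is controlled by the identity $e^{2\pi i\sigma(\lambda+n)}=e^{2\pi i\sigma(\lambda)}$ for $n\in\bZ$; this both makes the RH Galois action on $M_B(Y,1)(\bC)$ a genuine function of $\mu$ rather than of a chosen $\lambda$, and ensures that different locally free lifts $(E,\nabla)$ of a given $L$, which differ by integer shifts in residue eigenvalues, produce the same transported monodromy multiset for $L^\sigma$.
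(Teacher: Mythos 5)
Your proposal is correct and takes essentially the same approach as the paper: both identify the coordinate-wise exponential $\bC^{rs}\to(\bC^*)^{rs}$ with the rank-one Riemann--Hilbert map (the paper via external products of connections on $(\bP^1,\{0,\infty\})$, you via Deligne extensions on $Y=(\bC^*)^{rs}$), transport the Galois action through diagram~\eqref{eqDlf}, and conclude with Theorem~\ref{budurwang}. The only stylistic differences are that you isolate the argument as two explicit intertwining identities and handle a general Boolean combination $S'$ directly, whereas the paper reduces part~(2) to a single torsion-translated subtorus $S'$ and notes that $Exp^{-1}(S')$ is a $\bZ^{rs}$-translate family of a $\bQ$-linear subvariety, hence $\sigma$-invariant, so $S_0^\sigma=S_0$; also note that in your last paragraph the constructibility of $(S')^\sigma$ and of $(mon^{-1}(root(S')))^\sigma$ should be over $\bbQ$ rather than $\bQ$, since torsion translates are defined only over cyclotomic fields.
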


\begin{proof} (1) Let $T:=RH^{-1}(S)\cap M_{DR}(\overline{X}/D,r)^\text{lf}(\bC)$. The assumptions  guarantee that $RH(T)=S$. Let $S_1=root^{-1}(mon(S))$ and $T_1=root^{-1}(res(T))$. Then $Exp(T_1)=S_1$. 

Let $\sigma\in Gal (\bC/\bQ)$. Since $S$ is absolute, $RH(p_\sigma(T))=S^\sigma$ is $\bbQ$-constructible. Applying $\sigma$ coordinate-wise to points of $\bC^{rs}$ one obtains $\sigma(T_1)=root^{-1}(res(p_\sigma(T)))$, and therefore $Exp(\sigma(T_1))=root^{-1}(mon(S^\sigma))$ equals $\sigma(S_1)$ and is $\bbQ$-constructible.

One can identify $Exp:\bC\to \bC^*$ with the $RH$ map from the moduli of logarithmic connections on $(\bP^1, B=\{0,\infty\})$ on the trivial rank 1 line bundle to the Betti moduli of rank 1 local systems on $\bC^*$. Taking the product coordinate-wise $rs$-times corresponds to taking external product of logarithmic connections and local systems, respectively. The previous discussion implies then that  $S_1$ is an absolute $\bbQ$-constructible subset of the Betti moduli of rank 1 local systems on $(\bC^*)^{rs}$. By Theorem \ref{budurwang}, it follows that $S_1$ is obtained from finitely many torsion-translated algebraic subtori via a finite sequence of taking unions, intersections, and complements, as claimed.

(2) It is enough to prove that $S_0=mon^{-1}(root(S'))$ is absolute $\bbQ$-constructible if 
 $S'$ is a torsion-translated algebraic subtorus of $(\bC^*)^{rs}$. Let $T'=Exp^{-1}(S')$. Then $T'$ consists of all $\bZ^{rs}$-translates of a linear subvariety of $\bC^{rs}$ defined over $\bQ$. Define $T_0=RH^{-1}(S_0)\cap M_{DR}(\overline{X}/D,r)^\text{lf}(\bC)$. Then $T_0=res^{-1}(root(T'))$.
  Let $\sigma\in Gal(\bC/\bQ)$. Then $$S_0^\sigma=RH(p_\sigma(T_0)) = RH(res^{-1}(root(\sigma (T'))))= RH(res^{-1}(root(T')))=mon^{-1}(root(S')),$$
 where the maps $RH, res, mon$ are the ones defined for $(\bar X^\sigma, D^\sigma)$,  $X^\sigma$, and the first equality holds because of the assumptions (a), or (b). In particular, $S_0$ is absolute $\bbQ$-closed.
\end{proof}

Next we consider the  morphism 
 $
\det:M_B(X,r)(\bC)\to        M_B(X,1)(\bC)
$ 
defined over $\bQ$ given by the top exterior power of local systems.

\begin{lemma}\label{lemMinfa} $\;$

(1) If $S\subset M_B(X,r)(\bC)$ is an absolute $\bbQ$-constructible subset, then
$
\det(S) 
$ is absolute $\bbQ$-constructible in $M_B(X,1)(\bC)$.

(2) Conversely, let $S'\subset M_B(X,1)(\bC)$ be an absolute $\bbQ$-closed subset. Then $\det^{-1}(S')$ is an absolute $\bbQ$-closed subset of $M_B(X,r)(\bC)$.
\end{lemma}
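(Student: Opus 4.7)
The proof plan rests on the observation that $\det$ is a morphism of $\bQ$-schemes and that, as a functor at the level of local systems (and $\sD$-modules via Riemann-Hilbert), taking the top exterior power commutes with the base-change operation $p_\sigma$ for every $\sigma \in \mathrm{Gal}(\bC/\bQ)$. I would therefore first isolate the compatibility
\[
\det(S)^\sigma = \det_\sigma(S^\sigma),
\]
where $\det_\sigma : M_B(X^\sigma,r)(\bC) \to M_B(X^\sigma,1)(\bC)$ is the determinant morphism for $X^\sigma$. This is essentially the statement that $\bigwedge^{r}$ is a natural functor on the relevant categories and lifts to an absolute $\bQ$-constructible functor in the sense of \cite[Section 6]{BW}: it commutes with Riemann-Hilbert on both sides of diagram (\ref{eqDrHg}) and commutes with $p_\sigma$ because $p_\sigma$ is pullback along a scheme isomorphism, which preserves tensor and wedge constructions on $\sD$-modules.

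With this compatibility in hand, part (1) reduces to a routine application of Chevalley's theorem. For each $\sigma$, the set $S^\sigma \subset M_B(X^\sigma,r)(\bC)$ is $\bbQ$-constructible by hypothesis. The determinant $\det_\sigma$ is a morphism of $\bC$-schemes defined over $\bQ$, hence sends $\bbQ$-constructible subsets (finite unions of $\bbQ$-subschemes) to $\bbQ$-constructible subsets of $M_B(X^\sigma,1)(\bC)$; apply Chevalley partwise. Combining with the compatibility above yields that $\det(S)^\sigma$ is $\bbQ$-constructible for all $\sigma$, which is exactly absolute $\bbQ$-constructibility.

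Part (2) is similar but slightly simpler. We have $\det^{-1}(S')^\sigma = \det_\sigma^{-1}((S')^\sigma)$ by the same compatibility; since $(S')^\sigma$ is $\bbQ$-closed in $M_B(X^\sigma,1)(\bC)$ by the absoluteness of $S'$, and $\det_\sigma$ is a morphism defined over $\bQ$, its scheme-theoretic preimage is a $\bbQ$-closed subset of $M_B(X^\sigma,r)(\bC)$. Hence $\det^{-1}(S')$ is absolute $\bbQ$-closed.

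The only nontrivial step is the commutation $\det \circ (-)^\sigma = (-)^\sigma \circ \det_\sigma$ (i.e., checking that determinants really are compatible with conjugation). This is a categorical fact: under the Riemann-Hilbert correspondence the determinant of a local system corresponds to the top exterior power of the associated flat connection, and $p_\sigma$, being the pullback along a base change of $\bQ$-schemes, commutes with every operation expressible by tensor products and their subquotients. One can either invoke this as a standard property of the Riemann-Hilbert equivalence, or verify it directly on the level of representation spaces $R_B$ by noting that $\det$ on $R_B(X,x_0,r)$ is given by a universal polynomial expression in the entries of $\GL_r$, defined over $\bQ$, and thus commutes with $\sigma$ coordinate-wise. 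Either route dispatches the obstacle, and the rest is formal.
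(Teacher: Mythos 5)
Your proof is correct and takes the same route as the paper: establish the compatibility $\det(S^\sigma)=(\det(S))^\sigma$ and then invoke the fact that $\det$ is a morphism of $\bQ$-schemes (for Chevalley in part (1) and preimages of closed sets in part (2)). The paper asserts these facts more tersely; your elaboration on why the compatibility holds (via $p_\sigma$ commuting with exterior powers, or via the universal polynomial description on $R_B$) is consistent with what the paper leaves implicit.
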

\begin{proof} 
(1) It follows from the fact that $\det(S^\sigma)=(\det(S))^\sigma$ for all $\sigma\in Gal(\bC/\bQ)$, together with the fact that $\det(S^\sigma)$ is $\bbQ$-constructible by the assumption on $S$.

(2) Similarly, $(\det^{-1}(S'))^\sigma=\det^{-1}((S')^\sigma)$, and $\det$ is a morphism of $\bQ$-schemes.
\end{proof}

One can extend (\ref{eqDlf}) to include $det$ and yield a commutative diagram

\be\label{eqDlfb}
\begin{gathered}
\xymatrix{
M_{DR}(\bar X/D,r)^{\rm{lf}}(\bC) \ar[d]^{RH} \ar[rr]^{\det\times res\quad} & &  M_{DR}(\bar X /D,1)(\bC)\times \bC^{rs} \ar[d]^{RH\times f} & &M_{DR}(\bar X /D,1)(\bC)\times \bC^{rs} \ar[ll]_{id\times root} \ar[d]^{id\times Exp}\\
M_B(X,r)(\bC) \ar[rr]^{\det\times mon\quad\quad\quad}                       &   &   M_B(X,1)(\bC) \times ((\bC^*)^{(r)})^s  & &  M_B(X,1)(\bC) \times (\bC^*)^{rs}  \ar[ll]_{\quad id\times root} .                 
}
\end{gathered}
\ee

The proofs of the previous lemmas easily combine to give:

\begin{prop}\label{propRedCs}
Let $M$ be one of the following:

(a) $M=M_B(X,r)$, in which case we also assume that $RH:M_{DR}(\overline{X}/D,r)^\text{lf}(\bC)\to M_B(X,r)(\bC)$ is surjective (e.g. if $X$ is a curve); or,

(b) $M=M_B^s(X,r)$, in which case $\det\times mon$ will denote the restriction $(\det\times mon)|_M$.

We have:

(1) If $S\subset M$ is an absolute $\bbQ$-constructible subset, then 
$
(id\times root)^{-1}((\det\times mon)(S) )$  is generated as a subset of  $M_B(X,1)(\bC) \times (\bC^*)^{rs}$ from torsion-translated algebraic subtori via a finite sequence of taking unions, intersections, and complements.

(2) Conversely, let $S'\subset M_B(X,1)(\bC) \times (\bC^*)^{rs}$ be a torsion-translated algebraic subtorus. Then $(\det\times mon)^{-1}((id\times root)(S'))$ is an absolute $\bbQ$-closed subset of $M$.

\end{prop}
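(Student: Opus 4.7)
The plan is to merge the arguments of Lemmas \ref{lemMinf} and \ref{lemMinfa} by exploiting the K\"unneth-type identification
\[
M_B(X,1)(\bC)\times (\bC^*)^{rs}\;\simeq\; M_B\bigl(X\times (\bC^*)^{rs},1\bigr)(\bC),
\]
which comes from $\pi_1(Y_1\times Y_2)=\pi_1(Y_1)\times \pi_1(Y_2)$ on connected spaces. Under this identification the target of $(id\times root)\circ(\det\times mon)$ sits inside a rank-one Betti moduli space, so Theorem \ref{budurwang} applies directly and controls absolute $\bar\bQ$-closed subsets by finite Boolean combinations of torsion-translated algebraic subtori.

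For part (1), given $S\subset M$ absolute $\bbQ$-constructible, set $T:=RH^{-1}(S)\cap M_{DR}(\bar X/D,r)^{\mathrm{lf}}(\bC)$. Then $RH(T)=S$: in case (a) this is the hypothesis; in case (b) each simple local system in $S$ has a Deligne extension that is locally free by Lemma \ref{lemDEs}(i) and stable by Lemma \ref{lemDEs}(ii). Define $S_1:=(id\times root)^{-1}((\det\times mon)(S))$ and $T_1:=(id\times root)^{-1}((\det\times res)(T))$. Following the proof of Lemma \ref{lemMinf}(1), the commutativity of \eqref{eqDlfb} together with the identification of $Exp:\bC\to\bC^*$ with the rank-one $RH$ map on $(\bP^1,\{0,\infty\})$ shows that the image of $T_1$ under the rank-one Riemann-Hilbert map on $X\times (\bC^*)^{rs}$ equals $S_1$. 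For any $\sigma\in Gal(\bC/\bQ)$, applying $\sigma$ coordinate-wise on the $\bC^{rs}$ factor and $p_\sigma$ on the de Rham factors, and transporting through the commutative diagram, shows that $\sigma(S_1)$ is $\bbQ$-constructible in $M_B(X^\sigma,1)(\bC)\times (\bC^*)^{rs}$. Hence $S_1$ is absolute $\bbQ$-constructible in $M_B(X\times(\bC^*)^{rs},1)(\bC)$, and Theorem \ref{budurwang} delivers the claimed Boolean description in terms of torsion-translated algebraic subtori.

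For part (2), let $S'\subset M_B(X,1)(\bC)\times (\bC^*)^{rs}$ be a torsion-translated algebraic subtorus; such a subtorus is absolute $\bbQ$-closed by Theorem \ref{budurwang}, and $\sigma(S')$ is again a torsion-translated algebraic subtorus for every $\sigma\in Gal(\bC/\bQ)$. Set $S_0:=(\det\times mon)^{-1}((id\times root)(S'))$. Combining the converse directions of Lemmas \ref{lemMinf}(2) and \ref{lemMinfa}(2), and using that $\det\times mon$ is a morphism of $\bQ$-schemes, one verifies
\[
S_0^\sigma \;=\; (\det\times mon)^{-1}\bigl((id\times root)(\sigma(S'))\bigr)\quad\text{inside } M^\sigma.
\]
Since $\sigma(S')$ is $\bbQ$-closed, so is $S_0^\sigma$, making $S_0$ absolute $\bbQ$-closed.

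The main point requiring care is the equality $RH(T)=S$ in part (1); this is precisely the surjectivity hypothesis in case (a), and it is supplied by Deligne extensions in case (b). Once this is granted, the proposition is essentially a packaging of the two preceding lemmas using the extended diagram \eqref{eqDlfb} and the product identification above.
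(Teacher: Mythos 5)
Your proposal is correct and follows the route the paper intends: the paper's own "proof" of Proposition \ref{propRedCs} is literally the one sentence "The proofs of the previous lemmas easily combine to give," and what you have written is a faithful unpacking of that combination using the extended diagram (\ref{eqDlfb}). The one genuinely useful addition is making explicit the identification $M_B(X,1)(\bC)\times(\bC^*)^{rs}\simeq M_B(X\times(\bC^*)^{rs},1)(\bC)$ so that Theorem \ref{budurwang} applies to the product in one shot; the paper applies it separately to the two factors in Lemmas \ref{lemMinf} and \ref{lemMinfa}, and your version makes the transfer cleaner.

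One small detail worth tightening: in part (2), you write $S_0^\sigma=(\det\times mon)^{-1}\bigl((id\times root)(\sigma(S'))\bigr)$, but the paper's computation in Lemma \ref{lemMinf}(2) actually yields the stronger equality with $S'$ itself on the right, since $T'=(id\times Exp)^{-1}(S')$ is a union of $\bZ^{rs}$-translates of a rational affine subspace and is therefore invariant under coordinate-wise $\sigma$; thus $S_0^\sigma=(\det\times mon)^{-1}\bigl((id\times root)(S')\bigr)$ with the maps taken for $X^\sigma$. Your weaker formula still suffices for the conclusion, because $\sigma(S')$ is again $\bbQ$-closed, so the argument is not broken — it is just slightly less sharp than what the diagram gives you for free.
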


\subsection{Appendix: Corrections to \cite{BW}}\label{secApp}

 In \cite{BW} the surjection of the two Riemann-Hilbert morphisms from (\ref{eqDidr}) was taken for granted. This is still an open question. For the lack of a proof, the definition on moduli-absoluteness and its relation with absoluteness from \cite[Section 7]{BW} need to be changed in order to be correct. These changes are only necessary if the two RH maps from (\ref{eqDidr}) are not surjective.

None of the main results and none of the conjectures, all being contained in the Introduction and Sections 9-11 of \cite{BW}, need any changes. This is because they are either about absolute subsets of $M_B(X,r)$ if there are no additional conditions on $X$, or about moduli-absolute subsets in cases when both maps $RH$ are surjective, cf. Theorem \ref{thmH21}.

Corrections to \cite{BW}:

\begin{enumerate}

\item \cite[Definition 7.4.1]{BW}: in the definition of moduli-absoluteness for a subset $S$ of $R_B(X,x_0,r)(\bC)$ (respectively, of $M_B(X,r)(\bC)$), one has to assume that $S$ is contained in the image of $RH$, so that $S=S^\sigma$ if $\sigma$ is the identity. 

\item \cite[Lemma 7.4.3]{BW} should begin with: Let $T$ be a subset of the image of $RH$ in $M_B(X,r)(\bC)$ such that $q_B^{-1}(T)$ is a subset of the image of $RH$ in $R_B(X,x_0,r)(\bC)$. Then the equivalence as stated there holds.

\item \cite[Proposition 7.4.4]{BW}: $S$, $T$, and $q_B^{-1}(T)$ are assumed to be subsets of the images of the two maps $RH$.

\item \cite[Proposition 7.4.5]{BW} remains true, but the proof of (1) for $M_B$ is exactly like the proof for $R_B$, and does not follow from the result for $R_B$.

\end{enumerate}

\section{Rigid local systems}\label{secRls}

We address Conjecture A  for absolute sets of  cohomologically rigid simple local systems and give the proofs of the remaining results from Introduction. We keep the setup from the previous section. Namely, $X$ is a smooth quasi-projective complex variety, $x_0$ a base point,  $j\colon X\xhookrightarrow{} \bar{X}$ 
an open embedding into a smooth projective variety such that $D=\bar{X}-X$ is a normal crossings divisor. Let $D_i$ with $1\le i\le s$ be the irreducible components of $D$. 

\begin{defn}\label{defChrls} Let $L$ be a simple complex local system on $X$. 

(1) $L$ is {\it  rigid} if it is an isolated  point of the moduli subspace $M_B(X, r, \{C_{1},\ldots, C_s\}, E)$  of simple local systems with determinant isomorphic to $E=\det L$ and local monodromy around $D_i$ contained in the conjugacy class $C_i$ in $\GL_r(\bC)$ determined by the monodromy representation of $L$. We denote by $M_B^{rig}(X,r)(\bC)$ the subset of $M_B(X,r)(\bC)$ consisting of  rigid local systems.

(2) $L$ is {\it cohomologically rigid} if $\mathbb H^1(\bar{X}, j_{!*} End^0( L))=0$ where $End^0(L)$ denotes the local system of traceless endomorphisms of $L$ and $j_{!*}$ is the intermediate extension functor. We denote by $M_B^{cohrig}(X,r)(\bC)$ the subset of $M_B(X,r)(\bC)$ consisting of cohomologically rigid local systems.

\end{defn}

The first important properties of rigidity are:

\begin{prop}\label{propRigo}$\;$ Let $X$ be as above and $L\in M_B^s(X,r)(\bC)$. 

(1)  Each fiber of the map $\det\times mon$ from (\ref{eqDlfb}) contains at most finitely many  rigid local systems. 

(2) $M^{cohrig}_B(X,r)(\bC)$ is an absolute $\bbQ$-constructible subset of $M_B(X,r)(\bC)$.

(3)  $L$ is cohomologically rigid if and only if $L$ is a reduced isolated point of $M_B(X,r, \{C_{i}\}, E)$, with $C_i$ and $E$ as in Definition \ref{defChrls}. In particular, $
M^{cohrig}_B(X,r)(\bC)\subset M^{rig}_B(X,r)(\bC).
$

(4) If $\dim X=1$, then
$
M^{cohrig}_B(X,r)(\bC)= M^{rig}_B(X,r)(\bC).
$
Moreover, if $X$ has non-trivial rigid local systems, then $X$ is $\bP^1$ minus at least 3 points.

\end{prop}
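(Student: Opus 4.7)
\bigskip

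\textbf{Proof proposal for Proposition \ref{propRigo}.}

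For (1), observe that the locus $M_B(X,r,\{C_i\},E)$ is by construction the fiber of the morphism $(\det\times mon)|_{M_B^s(X,r)}$ from (\ref{eqDlfb}) over the image point corresponding to $L$. This fiber is a locally closed subscheme of $M_B(X,r)$, hence of finite type over $\bC$. By definition, the rigid local systems in this fiber are precisely the isolated closed points of this scheme; a quasi-compact scheme has only finitely many isolated closed points, so the conclusion follows.

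For (2), the plan is to interpret cohomological rigidity as the simultaneous vanishing of a single cohomology jump function, and then to exhibit all operations involved as absolute $\bbQ$-constructible (in fact $\bbQ$-continuous) functors in the sense of Section 4. Starting from the tautological local system on $X$, the assignment $L\mapsto End^0(L)$ is a composition of tensor, dual, and kernel-of-trace, each of which is defined over $\bQ$ and is absolute $\bbQ$-continuous on the relevant unispaces of local systems. The intermediate extension $j_{!*}$ and the hypercohomology $\bH^1(\bar X,-)$ are among the standard absolute $\bbQ$-continuous functors listed in \cite[Section 6]{BW}. The cohomology jump locus $\{L\mid \dim\bH^1(\bar X, j_{!*} End^0(L))\ge 1\}$ is therefore absolute $\bbQ$-closed in $M_B(X,r)(\bC)$; its complement, intersected with the absolute $\bbQ$-constructible subset $M_B^s(X,r)(\bC)$ from Proposition~\ref{propZ2}(1), is $M_B^{cohrig}(X,r)(\bC)$, which is therefore absolute $\bbQ$-constructible in $M_B(X,r)(\bC)$.

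For (3), the standard parabolic deformation theory identifies the Zariski tangent space at a simple $L$ to the moduli $M_B(X,r,\{C_i\},E)$ with $\bH^1(\bar X, j_{!*} End^0(L))$: first-order deformations of $L$ in $M_B^s(X,r)$ are classified by $H^1(X, End^0(L))$, and the imposition of fixed determinant and fixed local monodromy conjugacy classes cuts out exactly the image of $\bH^1(\bar X, j_{!*} End^0(L))\hookrightarrow H^1(X, End^0(L))$ (this is the standard parabolic-cohomology description, going back to Katz and Simpson). Hence the vanishing of $\bH^1(\bar X, j_{!*} End^0(L))$ is equivalent to the Zariski tangent space at $L$ being zero, i.e.\ to $L$ being a reduced isolated point of $M_B(X,r,\{C_i\},E)$. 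In particular, a reduced isolated point is a fortiori an isolated point, giving the inclusion $M_B^{cohrig}\subset M_B^{rig}$.

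For (4), assume $\dim X=1$. Simplicity of $L$ gives $\bH^0(\bar X, j_{!*} End^0(L))=0$ since a nonzero global section would define a nontrivial parabolic endomorphism of $L$; by Poincar\'e–Verdier duality for the self-dual perverse sheaf $j_{!*} End^0(L)$ one also has $\bH^2(\bar X, j_{!*} End^0(L))=0$. The obstruction space to deformations of $L$ (with fixed local data) is therefore zero, so the moduli $M_B(X,r,\{C_i\},E)$ is smooth at $L$; hence being isolated is equivalent to being a reduced isolated point, giving $M_B^{cohrig}=M_B^{rig}$. For the ``moreover'' statement, a Riemann–Roch computation on $\bar X$ gives
\[
\chi(\bar X, j_{!*} End^0(L)) = (2-2g-s)(r^2-1) + \sum_{i=1}^s \dim End^0(L)^{T_i},
\]
where $T_i$ is the local monodromy around $D_i$, and the rigidity of $L$ forces this to equal $0$. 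Since each centralizer summand is at least $r-1$, and at most $r^2-1$, a case analysis on $(g,s)$ shows that for $r\ge 2$ one can only have $g=0$ and $s\ge 3$; the cases $g=0, s\le 1$ and $g=0, s=2$ are ruled out because $\pi_1(X)$ is trivial or abelian and hence admits no simple representations of rank $\ge 2$, while for $g\ge 1$ the resulting inequalities force all $T_i$ to be scalar, contradicting simplicity of $L$ for $r\ge 2$. The most delicate step will be the bookkeeping in this final case analysis, in particular ruling out the borderline $g=1$ cases; the rest is essentially a packaging of known deformation-theoretic facts and the unispace framework of Section 4.
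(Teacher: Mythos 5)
There is a genuine gap in your proof of part (1). You claim that ``the locus $M_B(X,r,\{C_i\},E)$ is by construction the fiber of the morphism $(\det\times mon)|_{M_B^s(X,r)}$.'' This is false: the map $mon$ records only the \emph{eigenvalues} (equivalently the characteristic polynomials) of the local monodromies around each $D_i$, whereas $M_B(X,r,\{C_i\},E)$ is cut out by fixing the full \emph{conjugacy classes} $C_i$, i.e.\ the Jordan types. Two local systems with the same determinant and the same eigenvalues at infinity can have different Jordan forms there and hence lie in different spaces $M_B(X,r,\{C_i'\},E)$; rigidity of $L$ makes it an isolated point of $M_B(X,r,\{C_i\},E)$, not of the (generally strictly larger) fiber of $\det\times mon$. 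Thus your sentence ``by definition, the rigid local systems in this fiber are precisely the isolated closed points of this scheme'' does not follow from the definition as given. The fix, which is what the paper does, is a short extra step: observe that for fixed eigenvalue data there are only finitely many compatible tuples of Jordan types $\{C_1,\ldots,C_s\}$, so the fiber of $\det\times mon$ is a finite union of the finite-type schemes $M_B(X,r,\{C_i\},E)(\bC)$ (each of which is of finite type by \cite{EG} and hence contains only finitely many isolated points). With that insertion the argument for (1) goes through.

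Your treatment of (2) is essentially the same as the paper's (both reduce to the list of absolute $\bQ$-constructible functors in \cite[Theorem 6.4.3]{BW}; you just unwind the composition more explicitly). For (3) and (4) the paper simply cites \cite{EG} and \cite{katz96} as well-known, whereas you sketch the underlying deformation-theoretic and Euler-characteristic arguments. The sketches go in the right direction (identification of the tangent space to $M_B(X,r,\{C_i\},E)$ at $L$ with $\bH^1(\bar{X}, j_{!*}End^0(L))$, vanishing of $\bH^0$ and $\bH^2$ in the curve case by simplicity and self-duality, then a Riemann--Roch count), but as you note the final case analysis in the ``moreover'' of (4) is only gestured at, so at present your version of (3) and (4) is more explicit but not fully closed; citing \cite{EG} and \cite{katz96} directly, as the paper does, is the shorter safe route.
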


\begin{proof} (1)
	The claim is equivalent to saying  there exist only finitely many isomorphism classes of   rigid local systems with fixed determinant whose local monodromy along the irreducible components of $D$ have fixed eigenvalues. Since $M_B(X,r, \{C_{i}\}, E)$ are $\bC$-schemes of finite type, see \cite{EG}, each $M_B(X,r, \{C_{i}\}, E)(\bC)$ contains at most finitely many  rigid local systems. Since $E$ is fixed, it is enough to show that there are only finitely many ordered sets $\{C_1,\ldots,C_s\}$ of conjugacy classes of $\GL_r(\bC)$ such that the set of eigenvalue sets $\{eig(C_1),\ldots, eig(C_s)\}\in ((\bC^*)^{r})^s$ is prescribed. This follows from the fact that there are only finitely many Jordan normal forms corresponding to a fixed set of eigenvalues with multiplicities. 
	
	(2) All the functors involved in the definition of cohomological rigidity are on the list of absolute $\bQ$-constructible functors from \cite[Theorem 6.4.3]{BW}, hence the cohomologically rigid simple local systems form an absolute $\bQ$-constructible subset of $M_B^s(X,r)(\bC)$, from which the claim follows.
	
	(3) and (4) are well-known, see for example \cite{EG} and \cite{katz96}, respectively.
\end{proof}

\begin{rmk}
It is not clear if $M_B^{rig}(X,r)(\bC)$ is even Zariski constructible. In fact it is still open if there exist rigid but not cohomologically rigid local systems.
\end{rmk}

We prove now the remaining results mentioned in Introduction.

\begin{proof}[Proof of Theorem \ref{thmChR}] (A1): It follows from the fact that A1 holds already for the moduli of simple local systems by Theorem \ref{propA1si}.

(A3): If $L$ is an absolute $\bbQ$-point of $M_B^{cohrig}(X,r)(\bC)$, then $(\det\times mon)(L)$ is the image under $id\times root$ of a torsion point, by Proposition \ref{propRedCs} (1). This is equivalent to $\det(L)$ and $C_i$ being quasi-unipotent. Conversely, if $L$ is a point of $M_B^{cohrig}(X,r)(\bC)$ and $(\det\times mon)(L)$ is the image of a torsion point under $id\times root$, then $(\det\times mon)^{-1}(\det\times mon)(L)$ is an absolute $\bbQ$-closed subset of $M_B^{cohrig}(X,r)(\bC)$ consisting of finitely many points and containing $L$, by Proposition \ref{propRedCs} (2) and Proposition \ref{propRigo} (1). Hence $L$ is also an absolute $\bbQ$-point by (1). This shows that A3 is equivalent with Simpson's conjecture in this case.

(A2): Since A1 holds for $M_B^{cohrig}(X,r)(\bC)$, the property A2 is equivalent to: let $S$ be a non-empty irreducible absolute $\bbQ$-closed subset of $M_B^{cohrig}(X,r)(\bC)$, then the subset $S'$ of absolute $\bbQ$-points of $S$ is Zariski dense in $S$. 
Let $\bar{S'}$ be  the Zariski closure of $S'$. Consider the $\bar{\bQ}$-constructible subsets in $M_B(X,1)(\bC) \times ((\bC^*)^{(r)})^s$, $$T=(\det\times mon)(S),\quad T'=(\det\times mon)(\bar{S'}).$$ By Proposition \ref{propRedCs}, $T$ is the image under $id\times root$ of a subset obtained from some torsion-translated affine algebraic subtori of $M_B(X,1)(\bC)\times (\bC^*)^{rs}$ via a finite sequence of taking finite union, intersection, complement. By the equivalence from (A3),  $T'\subset T$ contains all points in $T$ of the form $(id\times root)(\lambda)$ with $\lambda\in M_B(X,1)(\bC)\times (\bC^*)^{rs}$ torsion, hence  it is Zariski dense in  $T$. Recall that  $(\det\times mon)$ has finite fibers in $M_B^{cohrig}(X,r)(\bC)$ by Proposition \ref{propRigo} (1). Thus $\dim(\bar{S'})=\dim(T')=\dim (T)=\dim(S).$ Since $S$ is irreducible and closed,  it follows that $S=\bar{S'}$.

(A4): Let $S$ be an irreducible component of the Zariski closure  in $M_B^{cohrig}(X,r)(\bC)$  of an infinite subset of absolute
  		$\bar{\mathbb Q}$-points. By Proposition \ref{propRigo} (2), $M_B^{cohrig}(X,r)(\bC)$ is defined over $\bbQ$, hence $S$ is also defined over $\bbQ$. Then $T= (\det\times mon)(S)$ is $\bar{\bQ}$-constructible and irreducible.
  		If $S$ contains only finitely many absolute $\bar{\bQ}$-points, then it is $0$-dimensional and hence absolute $\bar{\bQ}$-closed. We may assume that $S$ contains infinitely many absolute $\bar{\bQ}$-points.
  		It then follows by the equivalence from (A3) that the set $(id\times root)^{-1}(\bar T)$ is the Zariski closure in $M_B(X,1)(\bC)\times (\bC^*)^{rs}$ of an infinite set of torsion points. In particular, it is an $S_r$-equivariant finite union of torsion-translated algebraic subtori by Theorem \ref{budurwang}. Thanks to Proposition \ref{propRedCs}, $(\det\times mon)^{-1}(\bar{T})$ is  absolute $\bar{\bQ}$-closed in $M_B^{cohrig}(X,r)(\bC)$.
  		Using again that $\det\times mon$ is a quasi-finite morphism, we have that $\dim((\det\times mon)^{-1}(\bar{T}))=\dim(\bar{T})=\dim(S).$
  		Thus $S$ is an irreducible component of $(\det\times mon)^{-1}(\bar{T})$, and by A1 it follows that  $S$ itself is absolute $\bar{\bQ}$-closed.
\end{proof}


\begin{rmk}  Conjecture A3 for $M_B^{cohrig}(X,r)$ would also hold for rank $r=3$ if \cite[Theorem 1.3]{LS} can be extended to the quasi-projective case, since  the integrality conjecture is proved in the quasi-projective case by \cite{EG}.
\end{rmk}

\begin{proof}[Proof of Theorem \ref{thmP3pts}]
Since the map $mon$ is a closed embedding in this case by {Example \ref{C^*}}, the claim follows from Lemma \ref{lemMinf}.
\end{proof}

At the intersection of the two cases considered in
Theorem \ref{thmCACR} we give an explicit description of the moduli of rigid local systems.  We can assume that $X=\bP^1\setminus D$ where $D=\{D_1,\ldots,D_s\}$ is a finite set of $s\ge 3$ distinct points, by Proposition \ref{propRigo} (4).

      \begin{proposition}\label{rk2}
      	      	Let $X=\bP^1\setminus D$ with $|D|=s\ge 3$.

	(1)	Let $L\in M_B^s(X,2)(\bC)$. The following are equivalent: \begin{itemize}
      		\item $L\in  M_B^{rig}(X,2)(\bC)$;
      		\item there are exactly $3$ points in $D$ around which the local monodromy of $L$ is not a nonzero scalar matrix.
      	\end{itemize}

	(2) $M_B^{rig}(X,2)(\bC)$ are the complex points of an open subscheme $M_B^{rig}(X,2)$ of $M_B(X,2)$ defined over $\bQ$ and there is a decomposition 
	  		\[   M_B^{rig}(X,2)=\bigsqcup_{i=1}^{{s(s-1)(s-2)}/{6}}M_i\]
	  		into $\binom{s}{3}$ disjoint closed subvarieties $M_i$ of $M_B^{rig}(X,2)$, such that each $M_i$ is absolute $\bQ$-closed in $M_B^{rig}(X,2)$ and the morphism $mon$ from (\ref{eqDlf}) induces an isomorphism from $M_i$ to the complement   of
	  		\[     V\left(\prod_{j,k,l,m=1,2}(x_{1j}x_{2k}x_{3l}x_{4m}x_{5m}\dots x_{sm}-1)\right)\] in  $	V(x_{11}x_{12}x_{21}x_{22}\dots x_{s1}x_{s2}-1, x_{41}-x_{42}, \dots, x_{s1}-x_{s2})$ 
			as subvarieties of $\left[\mathbb G_m^{(2)}\right]^{\times s}.$

(3) 	  If $s=3$ one has $M_B^{rig}(\bP^1\setminus\{0,1,\infty\},2)=M_B^s(\bP^1\setminus\{0,1,\infty\},2)=M_1.$

      \end{proposition}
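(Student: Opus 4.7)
The plan is to establish (1) via the Katz rigidity index in rank $2$, then deduce (2) by decomposing $M_B^{rig}(X,2)$ according to the choice of three non-scalar punctures and analyzing the image of the morphism $mon$, with (3) as the degenerate $s=3$ case.

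For (1), since $X$ is a curve, rigidity coincides with cohomological rigidity by Proposition \ref{propRigo}(4); a simple rank $2$ local system on $\mathbb{P}^1\setminus D$ with local conjugacy classes $C_i$ is cohomologically rigid iff the rigidity index $(2-s)\cdot 4+\sum_i\dim Z(C_i)$ equals $2$. In $\GL_2(\bC)$ the centralizer has dimension $4$ for scalar classes and $2$ for every other conjugacy class (whether diagonalizable with distinct eigenvalues, or non-scalar Jordan type); if $k$ punctures carry non-scalar monodromy the index becomes $8-2k$, forcing $k=3$. The openness of $M_B^{rig}(X,2)$ inside $M_B^s(X,2)$ and its $\bQ$-definability are then formal, since ``monodromy non-scalar at $D_i$" is a $\bQ$-open condition.

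For (2), for each $3$-subset $T\subset D$ define $M_T\subset M_B^{rig}(X,2)$ as the locus with non-scalar monodromy exactly at the points of $T$; by (1) this partitions $M_B^{rig}(X,2)(\bC)$ into $\binom{s}{3}$ pieces, each $\bQ$-defined and both Zariski closed (cut out by the closed condition ``scalar at every $D_i\notin T$") and Zariski open (complement of the union of the remaining $M_{T'}$), hence a clopen subscheme. To show $mon|_{M_T}$ is an isomorphism onto the described target in $\left[\mathbb{G}_m^{(2)}\right]^{\times s}$, I identify: $\prod_i x_{i1}x_{i2}=1$ encodes $\det(A_1\cdots A_s)=1$ from $A_1\cdots A_s=I$; the diagonals $x_{i1}=x_{i2}$ for $i\notin T$ encode scalar monodromy; and the excluded locus is the set of eigenvalue data admitting a $1$-dimensional sub-local-system, i.e.\ a choice $j_i\in\{1,2\}$ with $\prod_i x_{ij_i}=1$ (since $x_{i1}=x_{i2}$ for $i\notin T$, the $2^s$ formal factors reduce to $2^3$ genuinely distinct conditions indexed by the choices at the points of $T$). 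Injectivity of $mon|_{M_T}$ comes from Katz rigidity, since within $M_T$ the eigenvalue pair at each puncture determines the local conjugacy class (scalar off $T$; semisimple or Jordan on $T$ according to whether the eigenvalues differ or coincide), and a rigid simple local system is determined up to isomorphism by its local conjugacy classes. Surjectivity reduces to $s=3$, handled in Example \ref{C^*}(2)(b), by tensoring with a rank $1$ local system that trivializes the scalar monodromies.

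Absoluteness of $M_T$ in $M_B^{rig}(X,2)$ is immediate: Galois conjugation $\sigma$ carries the algebraic $\bQ$-conditions defining $M_T$ to those defining the analogous stratum $M_{T^\sigma}$ in $M_B^{rig}(X^\sigma,2)$, which is $\bQ$-closed by the same argument. For (3), the only $3$-subset when $s=3$ is $T=D$; any simple rank $2$ local system on $\mathbb{P}^1\setminus\{0,1,\infty\}$ lies in $M_1$, because a scalar monodromy $A_i=\lambda I$ together with the relation $A_1A_2A_3=I$ would force the image group to be abelian (generated by one of the two remaining monodromies up to the central $A_i$), whence the rank $2$ representation would be reducible. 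The main obstacle I anticipate is upgrading $mon|_{M_T}$ from a set-theoretic bijection to a scheme isomorphism along the Jordan sub-stratum, where eigenvalues coincide at a non-scalar puncture; this requires a uniform algebraic family of simple rank $2$ local systems over the target subvariety, which I plan to construct by twisting the $s=3$ universal family of Example \ref{C^*}(2)(b).
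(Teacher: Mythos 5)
Your parts (1) and (3) match the paper's argument essentially verbatim: Katz's centralizer-dimension criterion (Proposition \ref{propCz}) gives exactly three non-scalar punctures in rank $2$, and for $s=3$ a scalar local monodromy forces reducibility. For part (2), however, you take a genuinely different route to the central isomorphism. Where the paper makes an explicit GIT reduction --- identifying $V\coloneqq\{[(g_1,\dots,g_s)]\mid g_i\text{ scalar for }i\ge 4\}$ with $\bigl(\GL_2^{\times 2}\sslash\GL_2\bigr)\times(\bC^*)^{s-3}$ and then transporting the $s=3$ isomorphism of Example \ref{C^*} to read off $mon$ directly as a variety isomorphism onto the target in $\left[\mathbb G_m^{(2)}\right]^{\times s}$ --- you establish a set-theoretic bijection separately: injectivity via physical rigidity (correctly noting that within $M_T$ the eigenvalue data determines the conjugacy class, since non-scalar with repeated eigenvalue must be Jordan), and surjectivity by twisting by a rank-one system to descend to $\bP^1$ minus the three points of $T$. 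Your twisting argument is correct, but you are right to be uneasy about it: a bijection of closed points is not a scheme isomorphism, and your promissory note about constructing a ``uniform algebraic family'' is real outstanding work. The paper's GIT approach sidesteps this completely and additionally hands you irreducibility and reducedness of each $M_i$ for free.

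The genuine gap is your absoluteness claim. You write that absoluteness of $M_T$ is ``immediate: Galois conjugation $\sigma$ carries the algebraic $\bQ$-conditions defining $M_T$ to those defining the analogous stratum.'' This conflates being $\bQ$-closed with being \emph{absolute} $\bQ$-closed. The operation $S\mapsto S^\sigma$ on subsets of $M_B(X,2)(\bC)$ is $RH\circ p_\sigma\circ RH^{-1}$, which passes through the de Rham side and is \emph{not} coordinatewise application of $\sigma$ to monodromy matrices; your observation that ``scalar at $D_i$'' is cut out by $\bQ$-polynomial equations only shows $V$ is Zariski $\bQ$-closed, not that $V^\sigma$ is the analogous stratum in $M_B(X^\sigma,2)$. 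Nor can one dodge this via $mon$ and Lemma \ref{lemMinf}, because the condition is ``scalar'' and not merely ``repeated eigenvalues'' --- the eigenvalue map does not distinguish a scalar from a Jordan block. What is needed, and what the paper supplies, is the Deligne-extension computation: if $L$ has local monodromy $\alpha_j\cdot I$ at $D_j$, the canonical extension has residue $a_j\cdot I$ with $e^{2\pi i a_j}=\alpha_j$; then $p_\sigma$ turns this into residue $\sigma(a_j)\cdot I$, and the resulting monodromy $e^{2\pi i\sigma(a_j)}\cdot I$ is again scalar. Without this step, the absoluteness assertion is unproven.
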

    
      \begin{proof}
      	(1) Let $\rho:\pi_1(X,x_0)\ra\GL_2(\bC)$ represent $L$.
      	We calculate the dimension of the centralizer $Z_i$ of   the local monodromy $\rho(\gamma_i)$ of $L$ around each $D_i$.       	The possible Jordan canonical forms of a rank $2$ invertible  matrix are
      	\[J_1=\begin{pmatrix} \alpha&0\\0&\beta \end{pmatrix}, \;\;J_2=\begin{pmatrix} \alpha&0\\0&\alpha \end{pmatrix},\;\;J_3=\begin{pmatrix} \alpha&1\\0&\alpha \end{pmatrix},\]
      	where 
      	$ \alpha, \beta\in \mathbb C^*,\; \alpha\neq \beta.$
      	Then the dimensions of their centralizers are 2, 4, 2, respectively.

	Let  $N_0$  be the cardinality of the set
      	$\{i \in \{1,\ldots, s\}\mid \rho(\gamma_i)=\alpha I,\;\text{for some}
      	\; \alpha\in \mathbb C^*\}.$
      	Then
      	$\sum_{i=1}^{s} \dim Z_i= 4N_0+2(s-N_0)=2(N_0+s).$
      	Moreover  $L$ is rigid if and only if  
      	$2(N_0+s)=4(s-2)+2$
	by Proposition  \ref{propCz}. 	Equivalently, $N_0=s-3$.
      	
     (3) 	For the first equality, by (1) it remains to show that the local monodromy matrices $\rho(\gamma_i)$  are not scalar matrices for any simple representation $\rho$ and all $i=1, 2, 3$. Suppose to the contrary that one of the three matrices, say $\rho(\gamma_1)$, is a scalar matrix, equal to $\alpha I$ for some $\alpha\in\bC^*$. Then $ \rho(\gamma_3)= \alpha^{-1}\rho(\gamma_2)^{-1}.$
      	 It then follows that all eigenvectors of $\rho(\gamma_2)$ are eigenvectors of $\rho(\gamma_1)$ and $\rho(\gamma_3)$ as well.
      	This implies that $\rho$ is not a simple representation, which is a contradiction.   The second equality follows already from  {Example \ref{C^*}}.
	
	(2) We regard $M_B(X,2)$ as the closed subvariety $V(X_1\dots X_{s}-I)$
	 of $\GL_2(\bC)^{\times s}\sslash \GL_2(\bC)$ as in {Example \ref{exClCurve}}.
      	Let $V\subset M_B(X,2)(\bC)$ be the subset defined by $$V=\{ [(g_1, \dots, g_{s})]\in M_B(X,2)(\bC)\mid g_4=\alpha_4I, \dots, g_{s}=\alpha_{s}I, (\alpha_4, \dots, \alpha_{s})\in (\bC^*)^{s-3}\}.$$ 
Note that the set of scalar matrices in $\GL_2(\bC)$   is $\GL_2(\bC)$-invariant  Zariski closed and defined over $\bQ$. It follows that  $V$ is also an algebraic closed subset defined over $\bQ$, because it is the GIT quotient of a $\GL_2(\bC)$-invariant algebraic closed subset of $\GL_2(\bC)^{\times s}$ defined over $\bQ$.


		Let $V^s= V\cap M^s_B(X,2)(\bC)$, then we claim that $V^s=V\cap M^{rig}_B(X,2)(\bC)$. This is similar to the proof of (3).   To this end, let $L$ be a simple local system corresponding to a point $[(g_1, \dots, g_{s})]\in V^s$. If $g_1, \dots, g_s$ are nonzero scalar matrices, then $L$ would not be simple. If exactly one of $g_1, g_2, g_3$ is not a nonzero scalar matrices, say $g_1$, then $g_1=(g_2\dots g_s)^{-1}$ is a nonzero scalar matrix, a contradiction. If exactly two of $g_1, g_2, g_3$ are not nonzero scalar matrices,  say $g_1$ and $g_2$, then $g_1g_2=(g_3\dots g_s)^{-1}=\alpha I$
     	for some $\alpha\in \bC^*$. Then $\alpha^{-1}g_1g_2=I$. Thus $\alpha^{-1}g_1$ and $g_2$ have common nonzero eigenvectors, and these are then common eigenvectors for all $g_i$. In particular, $L$ is not simple in this case, a contradiction again. Thus part (1) implies the claim.

		We set $M_1=V^s$. One defines the other $M_i$ by redefining $V$ to choose other $s-3$ points among $D_1,\ldots, D_s$ for which the local monodromy should be scalar. All $M_i$ are mutually isomorphic, mutually disjoint, and cover $M^{rig}_B(X,2)(\bC)$.

		Next we show  $M_i$ are absolute $\bQ$-closed in $M_B^{rig}(X,2)(\bC)$. It is enough to show that $V$ is absolute $\bQ$-closed in $M_B(X,2)(\bC)$. Let $L\in V$. Then the Deligne canonical extension gives a logarithmic connection $(E, \nabla)$ in $RH^{-1}(L)$ whose residue along $D_j$  is $a_j\cdot I$, where $a_j\in \bC$ satisfies that  $e^{2\pi ia_j}=\alpha_j$ and the real part of $a_j$ lies in $[0,1)$, for   $j=4,\dots, s$.
		It is easy to see that $RH\circ p_\sigma (E,\nabla)$ is the isomorphism class of a local system whose local monodromy around $D_j^\sigma$ is  $e^{2\pi i\sigma(a_j)}\cdot I $ for $j=4,\dots, s$.   This shows that for every $\sigma\in \text{Gal}(\bC/\bQ)$, the set $V^\sigma$, which equals $RH\circ p_\sigma\circ RH(V)$ by Lemma \ref{lemSame}, has a similar description to $V$:
						$$V^\sigma=\{ [(g_1, \dots, g_{s})]\in M_B(X^\sigma,2)(\bC)\mid g_4=\alpha'_4I, \dots, g_{s}=\alpha'_{s}I, (\alpha'_4, \dots, \alpha'_{s})\in (\bC^*)^{s-3}\}. $$
Hence $V$ is absolute $\bQ$-closed in $M_B(X,2)(\bC)$.

		 For $g\in \GL_2(\bC)$ and $(g_1, g_2, g_3, \alpha_4I, \dots, \alpha_{s}I)\in \GL_2(\bC)^{\times s}$, the action of  $\GL_2(\bC)$ yields $g\cdot (g_1, g_2, g_3, \alpha_4I, \dots, \alpha_{s}I)= (gg_1g^{-1}, gg_2g^{-1}, gg_3g^{-1}, \alpha_4I, \dots, \alpha_{s}I).$
		Hence there is an isomorphism $V\simeq \left(\text{GL}_2(\bC)^{\times 2}\sslash \text{GL}_2(\bC)\right)\times (\bC^*)^{s-3}$ sending a point $[(g_1, g_2, g_3, \alpha_4I, \dots, \alpha_{s}I)]$ to $([ (g_2, g_3)], \alpha_4, \dots, \alpha_{s}).$ 
		Note that $g_1=(\alpha_4\dots \alpha_{s}g_2g_3)^{-1}, g_2, g_3, \alpha_4I, \dots, \alpha_{s}I$ have a common nonzero eigenvector if and only if $g_2, g_3$ have a common nonzero eigenvector, so above isomorphism identifies the subspaces of non-simple points: $$V\setminus M_1= V\setminus V^{s}=\{([ (g_2, g_3)], \alpha_4, \dots, \alpha_{s})\in V\mid g_2, g_3 \text{ have a common nonzero eigenvector} \}.$$
		Recall that   by {Example  \ref{C^*}}, \[\text{GL}_2(\mathbb C)^{\times 2}\sslash\text{GL}_2(\mathbb C)\xrightarrow{\sim} V(x_{11}x_{12}x_{21}x_{22}x_{31}x_{32}-1)\subset ((\mathbb C^*)^{(2)})^3_{(x_{11}, x_{12}, x_{21}, x_{22}, x_{31}, x_{32})}.\] 
		Hence, $$V\simeq V(x_{11}x_{12}x_{21}x_{22}x_{31}x_{32}-1)\times (\mathbb C^*)^{s-3}, $$ $$[(g_1, g_2, g_3, \alpha_4I, \dots, \alpha_{s}I)] \mapsto (eig((g_2g_3)^{-1}), eig(g_2), eig(g_3),  \alpha_4, \dots, \alpha_{s}).$$ 
		As $ t\mapsto (t,t)$ gives a closed embedding $\bC^*\simeq V(x-y)\subset (\bC^*)^{(2)}$, above isomorphism is equivalent to  $$V\simeq V(x_{11}x_{12}x_{21}x_{22}x_{31}x_{32}-1, x_{41}-x_{42}, \dots, x_{s 1}-x_{s2})\subset ((\mathbb C^*)^{(2)})^s_{(x_{11}, x_{12}, \dots, x_{s 1}, x_{s2})}, $$ $$[(g_1,\dots, g_{s})]\mapsto (eig((g_2g_3)^{-1}), eig(g_2), eig(g_3), eig(g_4), \dots, eig(g_{s})).$$
		Since  $(g_4\dots g_{s})^{-1}=(\alpha_4\dots \alpha_{s})^{-1}I$ for any $[(g_1, \dots, g_{s})]\in V$, we have $eig((g_2\dots g_m)^{-1})=(\alpha_4\dots \alpha_{s})^{-1}\cdot eig((g_2g_3)^{-1}).$
		Therefore,   the previous isomorphism is given by  $$mon \colon V\to V(x_{11}x_{12}x_{21}x_{22}\dots x_{s1}x_{s2}-1, x_{41}-x_{42}, \dots, x_{s1}-x_{s2})\subset ((\bC^*)^{(2)})^s,$$ $$[(g_1,\dots, g_{s})]\mapsto (eig(g_1)=eig((g_2\dots g_{s})^{-1}), eig(g_2), \dots, eig(g_{s})).$$
		Moreover, similarly as in {Example \ref{C^*}}, the subspace of non-simple points is identified via $mon$ with $V\setminus M_1=V\left(\prod_{j,k,
			l,m=1,2}(x_{1j}x_{2k}x_{3l}x_{4m}x_{5m}\dots x_{sm}-1)\right)\cap V.$
			Since $M_B(X,2)$ is reduced and irreducible, it can be easily seen that $V$ and $M_1$, and hence all $M_i$, are also reduced and irreducible. 
      \end{proof}

In the above proof we used:

\begin{prop}\label{propCz} (\cite[Theorem 1.1.2]{katz96}) Let $L$ be a rank $r$ simple local system on $X=\bP^1\setminus D$ where $D=\{D_1,\ldots,D_s\}$ is a finite set of $s\ge 3$ distinct points.    	   Let $Z_i$ be the centralizer  in ${Mat}_r(\mathbb C)$ of the local monodromy of $L$ around $D_i$, and regard $Z_i$ as a complex linear subspace. Then $L$ is rigid if and only if  $ \sum_{i=1}^{s} \dim Z_i= (s-2)r^2+2. $
\end{prop}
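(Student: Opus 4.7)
The plan is to prove the statement by a dimension count of the moduli space $M_B(X,r,\{C_i\},E)$ at the point $[L]$; rigidity of $L$ is equivalent to this moduli being $0$-dimensional at $[L]$, so the formula will drop out of computing that dimension exactly.

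First I would set up the representation side. Since $\pi_1(X,x_0)$ is the free group on $s-1$ generators, with presentation $\langle \gamma_1,\ldots,\gamma_s \mid \gamma_1\cdots \gamma_s = 1\rangle$, the set of representations with local monodromy in the prescribed classes is the affine scheme
\[
\mathcal R \;=\; \bigl\{(g_1,\ldots,g_s)\in C_1\times\cdots\times C_s \;:\; g_1\cdots g_s = I\bigr\}.
\]
Each $C_i$ is a $\GL_r(\bC)$-conjugation orbit, hence $\dim C_i = r^2 - \dim Z_i$. The constraint $\prod g_i = I$ is cut out by the multiplication map $\mu\colon \prod_i C_i \to \GL_r(\bC)$. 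Since the determinants are fixed by the $C_i$ and must multiply to $1$, the image of $\mu$ lies in $\SL_r(\bC)$, of dimension $r^2-1$. The moduli $M_B(X,r,\{C_i\},E)$ near $[L]$ is the quotient of an open subset of $\mu^{-1}(I)$ containing $(\rho(\gamma_i))$ by the $\GL_r(\bC)$-action by simultaneous conjugation.

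The key technical step, and the main obstacle, is to show that at the point $(\rho(\gamma_i))$ corresponding to a simple $L$, the differential $d\mu$ surjects onto $T_I\SL_r(\bC)=\mathfrak{sl}_r(\bC)$. Concretely, the tangent space to $C_i$ at $g_i$ is $\{[X,g_i]\,g_i^{-1}\mid X\in\mathfrak{gl}_r\}$, and a direct computation shows that the image of $d\mu$ (after right-translating to the identity) is the sum of the subspaces $\mathrm{Ad}(\rho(\gamma_1\cdots\gamma_{i-1}))(\mathfrak{gl}_r - \mathrm{Ad}(\rho(\gamma_i))\mathfrak{gl}_r)$. Because $L$ is simple, the representation $\mathrm{Ad}\rho$ on $\mathfrak{sl}_r$ has no nonzero invariant vectors, so the combined images generate $\mathfrak{sl}_r$; equivalently, one may argue this via the vanishing $H^2(\pi_1(X),\mathrm{Ad}^0\rho)=0$, which holds because $\pi_1(X)$ is free. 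Granting this submersion,
\[
\dim_{(\rho(\gamma_i))} \mathcal R \;=\; \sum_{i=1}^s(r^2-\dim Z_i)-(r^2-1).
\]

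Finally, since $L$ is simple, the centralizer of $\rho$ in $\GL_r(\bC)$ is $\bC^\ast\cdot I$, so the conjugation orbit of $(\rho(\gamma_i))$ in $\mathcal R$ has dimension $r^2-1$. Therefore, at $[L]$,
\[
\dim M_B(X,r,\{C_i\},E) \;=\; \sum_{i=1}^s (r^2-\dim Z_i) - 2(r^2-1) \;=\; (s-2)r^2 + 2 - \sum_{i=1}^s \dim Z_i.
\]
This dimension is zero precisely when $\sum_i \dim Z_i = (s-2)r^2+2$, and at a simple point with a smooth local quotient structure this $0$-dimensionality is equivalent to $[L]$ being an isolated point, i.e.\ to rigidity of $L$. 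The direction ``isolated $\Rightarrow$ formula'' uses that we computed a tangent-space dimension, so vanishing is automatic from isolation; the converse uses that simplicity gives the local quotient structure and the moduli is reduced of the expected dimension, which is why the dimension count is two-sided.
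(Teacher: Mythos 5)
The paper does not prove this proposition; it is quoted from \cite[Theorem 1.1.2]{katz96} and used as a black box, so the comparison is really with Katz's argument rather than with anything in the text. Your dimension count is correct and is the standard deformation-theoretic route: you realize $M_B(X,r,\{C_i\},E)$ near $[L]$ as a smooth $\mathrm{PGL}_r$-quotient of $\mathcal R=\mu^{-1}(I)\subset C_1\times\cdots\times C_s$, the essential input being the surjectivity of $d\mu$ onto $\mathfrak{sl}_r(\bC)$ at an irreducible point. Your trace-pairing argument gives exactly that: anything orthogonal to $\sum_i \mathrm{Ad}(\rho(\gamma_1\cdots\gamma_{i-1}))(1-\mathrm{Ad}(\rho(\gamma_i)))\mathfrak{gl}_r$ is seen inductively to commute with every $\rho(\gamma_i)$ and hence is scalar by Schur, so the image is all of $\mathfrak{sl}_r$. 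Combined with the Schur computation of the stabilizer, this yields $\dim_{[L]}M=(s-2)r^2+2-\sum_i\dim Z_i$ as a dimension of a smooth germ, so rigidity (isolation) is equivalent to its vanishing. One inaccuracy worth flagging: the aside that the surjectivity of $d\mu$ ``equivalently'' follows from $H^2(\pi_1(X),\mathrm{Ad}^0\rho)=0$ because $\pi_1$ is free is not right. That vanishing makes the unconstrained representation variety $\mathrm{Hom}(\pi_1,\GL_r)$ smooth but does not give transversality of the conjugacy-class conditions; the relevant obstruction is a parabolic $H^2$, whose vanishing requires irreducibility and is exactly what your direct argument establishes, so nothing is lost, but the two statements should not be presented as equivalent. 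For comparison, Katz's own proof is sheaf-theoretic: he computes $\chi(\bP^1,j_{!*}\mathrm{End}(L))=(2-s)r^2+\sum_i\dim Z_i$ by an Euler--Poincar\'e formula and deduces the criterion from $h^0=h^2=1$ for simple $L$. Your argument is the deformation-theoretic dual of that Euler-characteristic computation, works directly with matrices, and avoids the intermediate extension, which is arguably lighter and more self-contained in the curve case; Katz's formulation carries more sheaf-theoretic information and generalizes more readily.
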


\begin{rmk} An explicit description of $M_B^{rig}(\bP^1\setminus\{0,1,\infty\},3)$  is given in \cite{HW} based on Proposition \ref{propCz}.
\end{rmk}

     \bibliographystyle{alpha}

  \end{document}